\documentclass[12pt,letterpaper]{article}

\usepackage[T1]{fontenc}
\usepackage[utf8]{inputenc}
\usepackage[margin=1in]{geometry}
\usepackage{amsmath,amssymb,amsthm}
\usepackage{newtxtext,newtxmath}
\usepackage{bm,endnotes}
\usepackage{microtype}
\usepackage{graphicx,array}
\graphicspath{{experiment2/figures/monthly_returns_L36_net10bps_kl_labels_v2/}}

\usepackage{mathtools}       
\usepackage{mathrsfs}        
\usepackage{dsfont}          
\usepackage{xcolor}
\usepackage{enumitem}
\usepackage{booktabs}
\usepackage{subcaption}      
\usepackage{algorithm}
\usepackage{algpseudocode}   
\usepackage{wasysym}
\usepackage{natbib}
\bibpunct[, ]{(}{)}{,}{a}{}{,}

\usepackage{hyperref}
\hypersetup{
  colorlinks=true,
  linkcolor=teal,
  citecolor=teal,
  urlcolor=teal
}

\makeatletter
\g@addto@macro{\UrlBreaks}{\UrlOrds}
\makeatother

\theoremstyle{plain}
\newtheorem{theorem}{Theorem}
\newtheorem{lemma}{Lemma}
\newtheorem{proposition}{Proposition}

\newtheorem{assumption}{Assumption}
\theoremstyle{definition}
\newtheorem{definition}{Definition}
\newtheorem{example}{Example}
\theoremstyle{remark}
\newtheorem{remark}{Remark}

\def\P{\mathbb{P}}

\def\E{\mathbb{E}}
\def\R{\mathbb{R}}

\def\S{\mathbb{S}}

\def\F{\mathrm{F}}
\def\PSD{{\S^p_{+}}}
\def\PD{{\S^p_{++}}}

\def\prob{{\mathbb{P}}}

\def\diff{\mathrm{d}}

\def\transpose{\top}
\def\la{\langle}
\def\ra{\rangle}

\def\diag{\mathrm{diag}}
\def\tr{\operatorname{tr}}
\def\asto{\xrightarrow{\text{a.s.}}}
\def\diseq{\overset{d}{=}}

\let\argmin\relax

\DeclareMathOperator*{\argmin}{arg\,min}

\newcommand{\wh}{\widehat}

\newcommand{\Let}{\triangleq}

\newcommand{\bgeqn}{\begin{eqnarray}}
\newcommand{\edeqn}{\end{eqnarray}}
\newcommand{\bgeq}{\begin{eqnarray*}}
\newcommand{\edeq}{\end{eqnarray*}}

\newcommand{\setd}{{d\kern-.15em l}}
\newcommand{\hatsetd}{d\hat{\kern-.15em l}}
\newcommand{\dd}{\mathsf{d\kern-0.07em l}}

\let\amssymbboxplus\boxplus
\let\amssymbboxminus\boxminus
\renewcommand{\boxplus}{\mathbin{\mathop\amssymbboxplus}}
\renewcommand{\boxminus}{\mathbin{\mathop\amssymbboxminus}}

\definecolor{rose}{HTML}{D81B60}

\newcommand{\good}[1]{\ifmmode\mathbf{#1}\else\textbf{#1}\fi}

\usepackage{setspace}
\begin{document}

\title{A Composite Bayesian Risk--Distributionally Robust Optimization Model for Stochastic Programming with Elliptical Distribution Ambiguity}
\author{Renjie Chen\thanks{The authors are with the Chinese University of Hong Kong (\url{rchen@se.cuhk.edu.hk, hfxu@se.cuhk.edu.hk}).} \and Huifu Xu\footnotemark[1]} 
\date{}

\maketitle

\begin{abstract}

We study a one-stage stochastic optimization problem in which the random vector follows a semiparametric elliptical distribution with unknown location and shape parameters and an unspecified radial distribution. Assuming that the decision maker only has access to samples of the random vector, we develop a framework to jointly address parameter uncertainty and distributional ambiguity arising from this composite structure by integrating Bayesian posterior learning with distributionally robust optimization. To update the posterior distribution of the parameters, a key challenge is that the density of an elliptical distribution depends on the unknown radial distribution, and hence conventional likelihood-based Bayesian inference is infeasible. To overcome this challenge, we propose a novel estimating-equation-based Gibbs inference framework that updates posterior distributions of model parameters solely through estimating equations. We establish strong consistency of the resulting Gibbs posteriors under mild regularity conditions. For elliptical distributions, we construct a pair of estimating equations for the location and shape parameters that are invariant to the radial component, which enables posterior learning without specifying the density. For the distributional uncertainty, we construct an ambiguity set for the radial distribution by transforming observed samples into pseudo-samples of the radial variable under fixed location and shape parameters. Combining the above components, we formulate a composite Bayesian risk--distributionally robust optimization (CBR--DRO) model, in which the inner problem captures worst-case radial distributions within the ambiguity set, and the outer problem evaluates the decision risk induced by the parameter uncertainty through a user-specified risk measure. We further develop a cutting-plane algorithm to solve the resulting optimization problem. Numerical experiments demonstrate that the proposed approach provides substantial performance improvements over benchmark methods, including sample average approximation and distributionally robust optimization approaches, which implies the value of jointly modeling parameter uncertainty and distributional ambiguity.
\end{abstract}

\noindent\textbf{Keywords:} Elliptical distribution, Gibbs learning, DRO, estimating equations, composite-Bayesian risk

\section{Introduction}

Stochastic optimization provides a principal framework for decision-making under uncertainty, but the quality of the resulting decisions depends on how the distribution of the underlying random vector is modeled and estimated. In many applications, the multivariate normal distribution is attractive because of its analytical convenience, yet its light-tail assumption can be restrictive when the data exhibit heavy tails or atypical observations. This issue is particularly relevant in financial applications, where the normality assumption for asset returns can be inadequate and hence, Student's $t$ distributions, or more generally elliptical distributions, have been widely used as alternatives~\citep[see, e.g.,][]{ref:bodnar2007distribution,ref:mcneil2015quantitative,ref:capponi2022systemic}. Elliptical distributions have also found broad applications in signal processing and machine learning; see \citet{ref:ollila2012complex} for a survey and \citet{ref:delmas2024elliptically} for a recent monograph. The flexibility of elliptical distributions makes them particularly useful for modeling multivariate data with a wide range of tail behaviors~\citep{ref:KOZUBOWSKI201359}.

A key feature of an elliptical distribution is its stochastic representation, i.e., $\xi \overset{d}{=} \mu_0+r\Sigma_0^{1/2}u$,  where $\mu_0\in\mathbb{R}^p$ is a location vector, $\Sigma_0\in\mathbb{S}_{++}^p$ is a shape matrix, and $u$ is uniformly distributed on the unit sphere, and $r\in\R_+$ is a nonnegative radial random variable independent of $u$. This representation separates the locational and directional structure of the distribution, represented by $(\mu_0,\Sigma_0)$, from its radial behavior, characterized by the distribution $P_0$ of $r$. 
Different choices of $P_0$ can therefore generate elliptical distributions with substantially different tail behaviors. The multivariate normal and Student's $t$ distributions are prominent examples of this family, see Examples \ref{exam:n-dis} and \ref{exam:t-dis}.

In this paper, we consider a stochastic optimization problem in which the underlying random vector $\xi$ is known to be elliptically distributed, but its location parameter $\mu_0$, shape matrix $\Sigma_0$, and radial distribution $P_0$ are all unknown. The only available information are $n$ i.i.d.\ observations of $\xi$. Under this setting, we seek a decision model that explicitly accounts for the epistemic uncertainty induced by finite data while preserving the known elliptical structure. This goal leads to two intertwined challenges.
The first is a \emph{decision modeling} challenge: 
how should one account for parametric uncertainty (of $\mu_0$ and $\Sigma_0$) and nonparametric uncertainty (of $P_0$) of an elliptical distribution in an integrated model, without sacrificing the tractability and convexity of the resulting optimization problem?
The second is a \emph{statistical inference} challenge: how can one quantify the epistemic uncertainty about $(\mu_0,\Sigma_0)$ when the radial distribution $P_0$ is unknown?   These two questions motivate the two main methodological developments of this paper.

We first discuss the decision-modeling challenge. A standard approach to stochastic optimization under distributional uncertainty is distributionally robust optimization (DRO). Instead of committing to a single estimated probability distribution, DRO constructs an ambiguity set of plausible distributions and optimizes against the worst-case distribution within this set, and hence hedges against the uncertainty. 
A large literature has developed ambiguity sets based on moment information
\citep{ref:delage2010distributionally,ref:hanasusanto2015distributionally,xu2018distributionally}, probability divergences
\citep{ref:ben2013robust}, and Wasserstein distances
\citep{ref:mohajerin2018data,ref:xie2021distributionally,ref:gao2023distributionally}, among others 
\citep[e.g.][]{ref:wiesemann2014distributionally,ref:hanasusanto2015distributionally}.
We refer to \citet{rahimian2019distributionally} and
\citet{ref:kuhn2025distributionally} for comprehensive reviews of the DRO literature.
The nonparametric nature of DRO makes it especially attractive when little structural information about the true distribution is available. However, when meaningful distributional structure is known
in advance, a generic ambiguity set that ignores distributional structure information may contain
unrealistic distributions and consequently lead to unnecessarily conservative
decisions \citep{ref:shang2022distributionally,ref:iyengar2023hedging,ref:jiang2024distributionally}.


At the other end of the spectrum, Bayesian approaches provide a natural framework for quantifying uncertainty in unknown parameters of the distributions. In Bayesian risk optimization (BRO)~\citep{ref:wu2018bayesian,ref:qian2019composite,ref:shapiro2023bayesian}, the underlying distribution is assumed to belong to a parametric family $\{P_\theta:\theta\in\Theta\}$. A prior distribution for the unknown parameter $\theta$ is specified based on available information or subjective beliefs and is subsequently updated through Bayes' rule using observed samples of $\xi$. To account for parameter-estimation uncertainty, a risk measure is applied to the resulting random objective value. This approach is particularly well suited to settings in which the distribution of the random vector is fully characterized by a finite-dimensional parameter.

The elliptical setting considered in this paper lies between these two regimes. Even if $(\mu_0,\Sigma_0)$ were known exactly, the distribution of $\xi$ would remain undetermined because the radial distribution $P_0$ is unspecified. Hence, a purely parametric Bayesian model does not capture all distributional uncertainty. Conversely, treating the entire distribution of $\xi$ nonparametrically through a conventional DRO ambiguity set ignores the structural distinction between uncertainty about $(\mu_0,\Sigma_0)$ and uncertainty about $P_0$. This distinction is particularly important because the two sources of uncertainty have different forms: the former concerns finite-dimensional structural parameters, whereas the latter concerns an unknown probability distribution.

Motivated by this observation, we propose a \emph{composite Bayesian risk--distributionally robust optimization} (CBR--DRO) framework that treats these two sources of uncertainty separately and differently. Based on $n$ i.i.d. observations $\xi_1,\ldots,\xi_n$, we use a posterior distribution $\Pi_n$ to represent the epistemic uncertainty about the finite-dimensional parameters $(\mu_0,\Sigma_0)$, and an ambiguity set is used to represent uncertainty about the nonparametric radial distribution $P_0$. Given a candidate pair $(\mu,\Sigma)$, by the stochastic representation of elliptical distributions, the observations $\xi_1,\ldots,\xi_n$ can be transformed into pseudo-observations of the radial variable,
$
    r_i(\mu,\Sigma)
    \coloneqq
    \left\|
        \Sigma^{-1/2}(\xi_i-\mu)
    \right\|,
$ for $i=1,\ldots,n$, which can then be used to construct an ambiguity set $\mathcal{M}(\mu,\Sigma)$ for $P_0$. This leads to the model
\begin{equation}
    \min_{x\in\mathcal{X}}
    \rho_{(\mu,\Sigma)\sim\Pi_n}
    \left(
        \max_{Q\in\mathcal{M}(\mu,\Sigma)}
        \mathbb{E}_{r\sim Q,\,
        u\sim\mathrm{Unif}(\mathbb{U}^{p-1})}
        \left[
            f\bigl(x,\mu+r\Sigma^{1/2}u\bigr)
        \right]
    \right),
\end{equation}
where the inner DRO problem hedges against ambiguity in the radial distribution, and the outer risk measure $\rho$ evaluates the decision risk induced by posterior uncertainty in the location and shape parameters. The inner ambiguity set $\mathcal{M}(\mu,\Sigma)$ is not introduced independently, since the pseudo-observations of the radial variable are constructed from predetermined parameters $(\mu,\Sigma)$. 
We note that the framework does not require any particular choice of ambiguity set, and can accommodate, for example, Wasserstein-, moment-, or divergence-based constructions.

The CBR--DRO construction, however, requires a posterior distribution for $(\mu_0,\Sigma_0)$, and obtaining such a posterior under a general elliptical model is itself nontrivial. In a classical Bayesian model with density $p(\xi\mid\theta)$, the likelihood directly specifies how observations update the prior distribution of $\theta$. When the density of an elliptically distributed random vector exists, it takes the form
$
    f_\xi(x)
    \propto
    g\left(
        (x-\mu_0)^\top
        \Sigma_0^{-1}
        (x-\mu_0)
    \right),
$
where the density generator $g$ is determined by the radial distribution $P_0$. However, since $P_0$, and hence $g$, is unknown in general elliptical setting, there is no fully specified likelihood for $(\mu_0,\Sigma_0)$ on which to base a conventional Bayesian update.


This difficulty naturally motivates the use of generalized Bayesian inference, a subcategory of which is known as Gibbs posterior inference. Gibbs posterior methods replace the negative log-likelihood with a problem-specific loss function and update the prior through exponential weighting of the empirical loss. 
Early developments include \citet{ref:zhang2006information}, \citet{ref:bissiri2010bayesian}, and \citet{ref:bissiri2012bayesian}, and \citet{ref:bissiri2016general} develop a general framework for updating beliefs through loss functions. The concentration and calibration properties of Gibbs posteriors for risk minimizers have subsequently been studied in \citet{ref:martin2022direct} and \citet{ref:syring2023gibbs}.
In PAC-Bayesian theory, Gibbs posteriors also arise as variational optimizers that balance empirical risk against a Kullback--Leibler divergence from the prior~\citep{ref:mcallester1998some,ref:catoni2007pac}.
Existing Gibbs posterior methods are particularly useful when a fully specified likelihood is unavailable. However, they are typically loss-function-based and are therefore less directly applicable when the parameter of interest can not be characterized as a minimizer of an expected loss.


Our development starts from a different primitive. Rather than beginning with a loss-function-based parameter, we consider the case when the parameter of interest $\theta_0$ is characterized directly as the unique root of a general estimating equation $\Phi(\theta)
    \coloneqq
    \mathbb{E}[\phi(\theta,\xi)]
    =
    0.$
Given observations $\xi_1,\ldots,\xi_n$, we construct a Gibbs posterior by directly penalizing the residual of the empirical estimating equation, with posterior density proportional to
$
    \exp\left(
        -\eta
        \left\|
            \sum_{i=1}^n
            \phi(\theta,\xi_i)
        \right\|
    \right)\pi(\theta),
$
where $\pi$ is a prior density and $\eta>0$ is the learning rate. The magnitude of the empirical estimating equation therefore plays the role of a data-dependent discrepancy: parameter values for which the estimating equation is closer to being satisfied receive larger posterior weight, and the learning rate $\eta$ controls how strongly the posterior density responds to this empirical discrepancy relative to the prior.
We note that the loss-function-based formulation can be recovered when the estimating function corresponds to the first-order condition of the loss, whereas the estimating-equation formulation remains applicable when a convenient integrable loss function is unavailable.


We exploit the Gibbs framework to address posterior inference for elliptical distributions. Although the unknown radial distribution prevents likelihood-based inference, the stochastic representation of an elliptical random vector reveals that the radial magnitude can be removed through suitable normalization. For the location parameter, we use the normalized directional estimating function $\varphi_1(\mu,\xi) =    \frac{\xi-\mu}{\|\xi-\mu\|}$.
For the shape parameter, we construct an estimating equation closely related to Tyler's distribution-free scatter estimator and the angular central Gaussian model~\citep{ref:tyler1987distribution,ref:tyler1987statistical}. The resulting estimating equations depend only on locational and directional information and normalize out the radial component. Consequently, their population roots identify the location and shape parameters without specifying $P_0$. This feature is particularly useful for heavy-tailed elliptical distributions: the location and shape parameters remain well defined through the stochastic representation even when the first or second moments of the distribution do not exist. After imposing a scale normalization on the shape matrix, we show that the joint estimating equation uniquely identifies $(\mu_0,\Sigma_0)$ and establish strong consistency of the corresponding Gibbs posterior.

The proposed CBR--DRO model also raises computational challenges. Its objective is a risk functional with respect to the posterior distribution of $(\mu,\Sigma)$, while every posterior realization requires evaluating an inner DRO value function. 
To deal with such structure, we adopt sample average approximation (SAA) of the risk measure, and develop a cutting-plane algorithm based on a first-order oracle for the inner DRO value function. The method is related to the classical cutting-plane method of \citet{ref:kelley1960cutting}, but exploits the composite structure of our model by constructing scenario-wise piecewise-linear lower approximations of the inner value functions before applying the outer risk measure. This produces a lower approximation that retains information from individual posterior scenarios rather than linearizing the aggregated risk objective directly. We establish convergence of the upper and lower bounds, optimality of accumulation points, and finite termination for every prescribed positive optimality tolerance.

The main contributions of this paper can be summarized as follows.
\begin{itemize}

    \item \textbf{Composite modeling of elliptical distribution ambiguity.}
    We propose a CBR--DRO framework for stochastic optimization when the underlying random vector is elliptically distributed but its location, shape, and radial distribution are unknown. By exploiting the stochastic representation of elliptical distributions, the model separates parametric uncertainty in the finite-dimensional location and shape parameters from distributional ambiguity in the radial component. The former is handled through a Bayesian risk measure, and the latter is handled through an inner DRO problem constructed from pseudo-observations of radial variable. This provides an integrated uncertainty-aware framework that preserves known elliptical structure information.

    \item \textbf{Gibbs posterior inference based on estimating equations.}
    We develop a novel Gibbs posterior framework in which the parameter of interest is characterized directly as the root of an estimating equation and establish general sufficient conditions for strong posterior consistency. We then construct radial-invariant estimating equations that jointly identify the location and normalized shape parameters of an elliptical distribution. Combining the methodology, we obtain a consistent posterior inference procedure for $(\mu_0,\Sigma_0)$, which does not depend on the specification of the radial distribution or the existence of the moments of the underlying elliptical distribution.

    \item \textbf{Computational developments.}
    We establish convexity and a uniform sample approximation result for the CBR--DRO objective, and develop a tailored cutting-plane algorithm with convergence and finite-termination guarantees. We further consider the application of CBR--DRO to portfolio optimization, and conduct numerical tests on both synthetic data and real data.

\end{itemize}

The remainder of the paper is organized as follows. Section~\ref{sec:prel} reviews elliptical distributions and risk measures. Section~\ref{sec:model} formulates stochastic optimization under elliptical distribution ambiguity and introduces the CBR--DRO model. Section~\ref{sec:gibbs} develops the estimating-equation-based Gibbs posterior framework and specializes it to posterior inference for the location and shape parameters of elliptical distributions. Section~\ref{sec:comp} studies the computational properties of the model, establishes the posterior sample approximation result, and develops the cutting-plane algorithm. Section~\ref{sec:num-exp} introduces the application of CBR--DRO to portfolio optimization, and reports the numerical results. 
Some technical lemmas and proofs can be found in Section~\ref{sec:lemmas}.

\noindent\textbf{Notation.}
We use $\R$, $\R_+$, and $\R_{++}$ to denote the sets of real,
nonnegative real, and strictly positive real numbers.
The space of real symmetric $p\times p$ matrices and its cones of
positive semidefinite and positive definite matrices are denoted by
$\S^p$, $\S^p_+$, and $\S^p_{++}$, respectively.
Unless otherwise specified, $\|\cdot\|$ denotes the Euclidean norm,
and $\langle x,y\rangle=x^\top y$ is the inner product of vectors of
the same dimension.
For matrices $A$ and $B$ of the same dimensions, the Frobenius inner
product and norm are defined by $\langle A,B\rangle=\tr(A^\top B)$
and $\|A\|_{\F}=\sqrt{\langle A,A\rangle}$.
We write $I_p$, or simply $I$ when the dimension is clear, for the $p\times p$
identity matrix, and $\diag(a_1,\ldots,a_p)$ for the diagonal matrix
with diagonal entries $a_1,\ldots,a_p$.
The smallest and largest eigenvalues of a symmetric matrix $A$ are
written as $\lambda_{\min}(A)$ and $\lambda_{\max}(A)$.
For $A\in\S^p_+$, $A^{1/2}$ denotes its unique symmetric positive
semidefinite square root; when $A\in\S^p_{++}$, we write
$A^{-1/2}=(A^{1/2})^{-1}$.
The unit sphere in $\R^p$ is denoted by
$\mathbb{U}^{p-1}=\{u\in\R^p:\|u\|=1\}$, and
$\mathrm{Unif}(\mathbb{U}^{p-1})$ denotes the uniform distribution
on this sphere.
Finally, $\mathds{1}_A$ denotes the indicator of an event $A$.

\section{Preliminaries}\label{sec:prel}

In this section, we recall some basic concepts about elliptical distributions and risk measures to be used throughout the paper.

\subsection{Spherical Distribution and Elliptical Distribution}

We begin by recalling the concepts of spherical distribution and elliptical distribution and some related properties from~\cite{ref:fang2018symmetric}. 
Let $\xi$ be a random vector supported on $\R^p$. 
$\xi$ is said to be \textit{spherically distributed}, or equivalently, has \textit{spherical distribution}, if for every $p\times p$ orthogonal matrix $Q$, 
$\xi \diseq Q\xi$.
Let $\varphi(t) \Let \E[e^{it^\transpose\xi}]$ be the characteristic function of $\xi$. Then $\xi$ is spherically distributed if and only if there exists a function $\phi:[0,+\infty)\to \mathbb{C}$, called characteristic generator of $\xi$, such that $\varphi(t) = \phi(t^\transpose t)$ for any $t \in \R^p$. 
Moreover, $\xi$ is spherically distributed 
if and only if it has a stochastic representation
 $   \xi \diseq ru$,
where $r$ is a random variable supported on $\R_+$, $u\sim \mathrm{Unif}(\mathbb{U}^{p-1})$ is a random vector distributed uniformly on the unit sphere $\mathbb{U}^{p-1}$ in $\R^p$, and $r$ and $u$ are independent. The concept of spherical distribution is an extension of the standard multinormal distribution $N(0,I_p)$. 

\begin{example}[Standard Multivariate Normal Distribution]
    Let $\xi\sim N(0,I_p)$. Then the characteristic function of $\xi$ is 
    $
        \varphi(t) = \exp(-t^\transpose t/2).
    $
    Moreover, it admits the stochastic representation $\xi\diseq ru$, where $r^2\sim \chi^2_p$, $u$ is distributed uniformly on the unit sphere, and $r$ and $u$ are independent.
\end{example}

The concept of elliptical distribution is an extension of $N(\mu_0,\Sigma_0)$.

\begin{definition}[Elliptical distribution]
    A random vector $\xi$ is said to be \textit{elliptically distributed} (or has \textit{elliptical distribution}), denoted by $\xi \sim \operatorname{EC}(\mu_0, \Sigma_0, P_0)$, if it admits the stochastic representation
\begin{align}
\label{eq:EC-stochastic-rep}
    \xi \diseq \mu_0 + r\Sigma_0^{1/2} u,
\end{align}
where $\mu_0 \in \R^p$ is the location vector, $\Sigma_0\in \PD$ is the shape matrix, $r$ is a random variable supported on $\R_+$, $P_0$ is the distribution of $r$, $u\sim \mathrm{Unif}(\mathbb{U}^{p-1})$ is a random vector distributed uniformly on the unit sphere in $\R^p$, and $r$ and $u$ are independent. 
A random vector is elliptically distributed with parameters $\mu_0$, $\Sigma_0$ and $P_0$ 
if and only if its characteristic function can be written as
\[
    \varphi(t) = \E[e^{it^\transpose\xi}] = \exp(it^\transpose \mu_0) \psi(t^\transpose \Sigma_0 t),
\]
where $\psi:[0,+\infty)\to \mathbb{C}$ is the characteristic generator that is uniquely determined by $P_0$ (up to scale). 
Moreover, the density function of an elliptically distributed random vector $f_{\xi}$, if exists, 
has the form
\[
    f_{\xi}(x) = c\cdot g\left((x-\mu_0)^\transpose \Sigma_0^{-1} (x-\mu_0)\right),
\]
where $c$ is a normalization scaler, and $g: \R_+\to \R_+$ is called a density generator, which is also uniquely determined by $P_0$. 
\end{definition}

Throughout the paper, we assume that the elliptically distributed random vector $\xi$ admits a bounded density function, i.e., $\|f_\xi\|_{\infty} \Let \sup_{x \in \R^p} f_\xi(x) < \infty$.
To see why the above is called elliptical distribution, we note that an ellipsoid in Euclidean space can be represented by an affine image of a ball: $
    \mathcal{E} = \{x=\mu_0+\Sigma_0^{1/2}u: \|u\|=r\}$,
where $\mu_0$ is the center of the ball, $\Sigma_0$ is the affine mapping, and $r$ is the radius of the ball. Then~\eqref{eq:EC-stochastic-rep} can be viewed as a stochastic counterpart of the ellipsoid.

\begin{example}[Multivariate Normal Distribution]\label{exam:n-dis}
    Let $\xi\sim N(\mu_0,\Sigma_0)$. Then it admits the stochastic representation $\xi\diseq \mu_0 + r\Sigma_0^{1/2} u$,
    where $r^2$ follows the $\chi^2_p$-distribution, $u$ is distributed uniformly on the unit sphere in $\R^p$, and $r$ and $u$ are independent.
    Moreover, the characteristic function of $\xi$ is $
        \varphi(t) = \E[e^{it^\transpose\xi}] = \exp(it^\transpose \mu_0) \exp(-t^\transpose \Sigma_0 t/2)$.
    The density function of $\xi$ is 
    \[
        f(x) = \frac{1}{\sqrt{(2\pi)^p\det(\Sigma_0)}} \exp\left(-\frac{1}{2}(x-\mu_0)^\transpose \Sigma_0^{-1} (x-\mu_0)\right).
    \]
\end{example}

\begin{example}[Multivariate Student's $t$ Distribution]\label{exam:t-dis}
    Let $\xi \sim t_\nu(\mu_0,\Sigma_0)$ be a Student's $t$ distribution with degrees of freedom $\nu>0$, location vector $\mu_0$, and shape matrix $\Sigma_0$. Then it admits the stochastic representation $    \xi \diseq \mu_0 + r\,\Sigma_0^{1/2}u$,
    where $u$ is distributed uniformly on the unit sphere in $\R^p$, $r$ and $u$ are independent, and $r^2/p$ follows the $F$-distribution with degrees of freedom $p$ and $\nu$.
    Moreover, the characteristic function of $\xi$ is $
    \varphi(t) = \E\left[e^{it^\transpose \xi}\right]
    = \exp\left(it^\transpose \mu_0\right)\psi\left(t^\transpose \Sigma_0 t\right)$,
    where, for $s\ge 0$,
    $
    \psi(s) = \frac{\big(\sqrt{\nu s}\big)^{\nu/2}\,K_{\nu/2}\big(\sqrt{\nu s}\big)}
         {2^{\nu/2-1}\Gamma(\nu/2)},
    $
    and $\Gamma(\cdot)$ is the gamma function and $K_{\alpha}(\cdot)$ is the modified Bessel function of the second kind \citep{ref:kibria2006short}.
    The density function of $\xi$ is 
    \[
        f(x) = \frac{\Gamma\big((\nu+p)/2\big)}{\Gamma(\nu/2)\nu^{p/2}\pi^{p/2}\det(\Sigma_0)^{1/2}}\left(1+\frac{1}{\nu}(x-\mu_0)^\transpose \Sigma_0^{-1} (x-\mu_0)\right)^{-(\nu+p)/2}.
    \]
\end{example}

\begin{remark}[Moments of elliptical distributions]\label{rmk:ellip}
    The tail behavior and moment properties of an elliptically distributed random vector $\xi$ can be very flexible and are determined by the distribution of $r$. For example, if $\E[r]$ exists, then the mean of $\xi$ is given by $
        \E[\xi] = \mu_0 + \E[r]\Sigma_0^{1/2}\E[u]=\mu_0,$
    since $r$ and $u$ are independent and $\E[u]=0$. More generally, the $k$-th moment of $\xi$ exists if and only if 
    the $k$-th moment of $r$ exists, since 
    \[
        \E[\|\xi-\mu_0\|^k] = \E[r^k\|\Sigma_0^{1/2}u\|^k] = \E[r^k]\cdot\E[\|\Sigma_0^{1/2}u\|^k],
    \]
    and $u$ is a bounded random vector. Nevertheless, even when the first or second moment of $\xi$ does not exist, the location vector $\mu_0$ and the shape matrix $\Sigma_0$ remain well defined through the stochastic representation~\eqref{eq:EC-stochastic-rep}.
\end{remark}

\begin{remark}[Scale non-identifiability]\label{rmk:scale}
In the stochastic representation~\eqref{eq:EC-stochastic-rep}, the scale of the shape matrix $\Sigma_0$ is not identifiable without a normalization on the radial distribution $P_0$. Indeed, for any constant $c>0$,
\[
    \xi \diseq \mu_0 + r\Sigma_0^{1/2}u
    \;\diseq\;
    \mu_0 + (c^{-1/2}r)\,(c\Sigma_0)^{1/2}u,
\]
so that the same elliptical distribution can be generated by simultaneously rescaling $r$ and $\Sigma_0$. Consequently, only the shape of $\Sigma_0$ is identifiable, while its scale depends on the choice of the distribution of $r$. To ensure identifiability, one usually imposes a scale normalization. In this paper, we fix the scale of $\Sigma_0$ by requiring $
    \tr(\Sigma_0)=p,$
so that $\Sigma_0$ is uniquely defined.
\end{remark}

Before we 
end this
subsection, we recall the following property of $u$, a random vector distributed uniformly on the unit sphere $\mathbb{U}^{p-1}$, which will be used throughout the paper.
\begin{proposition}\label{prop:E-uu}
    Let $u$ be a random vector that has a uniform distribution over the unit sphere $\mathbb{U}^{p-1}$. Then 
     $   \E[uu^\top] = I/p.
    $
\end{proposition}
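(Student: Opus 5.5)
The plan is to combine rotation invariance of the uniform distribution on $\mathbb{U}^{p-1}$ with the trace identity $\|u\|^2=1$. Set $M\Let\E[uu^\top]$. Since $\|u\|=1$ almost surely, every entry of $uu^\top$ is bounded by one in absolute value, so $M$ is a well-defined symmetric matrix. The goal is to show that $M$ is a multiple of the identity and then to identify the constant.

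\textbf{Step 1 (invariance).} For any $p\times p$ orthogonal matrix $Q$, the vector $Qu$ is again uniformly distributed on $\mathbb{U}^{p-1}$; this is exactly the spherical-distribution property $u\diseq Qu$ recalled above. It follows that
\[
QMQ^\top=\E[(Qu)(Qu)^\top]=M
\]
for every orthogonal $Q$.

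\textbf{Step 2 (structure of $M$).} From this invariance I read off the entries of $M$ directly.
\begin{itemize}
\item Taking $Q=\diag(1,\ldots,-1,\ldots,1)$, with the sign flip in coordinate $i$, gives $M_{ij}=-M_{ij}$ for every $j\neq i$. Hence all off-diagonal entries vanish.
\item Taking $Q$ to be the permutation matrix swapping coordinates $i$ and $j$ gives $M_{ii}=M_{jj}$.
\end{itemize}
Together these show $M=cI$ for some scalar $c$.

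Alternatively, one could argue spectrally. Write $M=V\Lambda V^\top$ and note that invariance forces $\Lambda$ to commute with all permutations, which again yields $M=cI$. The entrywise argument is simpler, so I will use it.

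\textbf{Step 3 (constant).} Taking traces and using linearity of expectation,
\[
pc=\tr M=\E[\tr(uu^\top)]=\E[\|u\|^2]=1,
\]
so $c=1/p$. This gives $\E[uu^\top]=I/p$.

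No step is a genuine obstacle. The only point requiring care is to state clearly that the uniform law on the sphere is invariant under orthogonal maps. This follows from the definition of uniform (normalized surface) measure, or equivalently from the representation $u\diseq Z/\|Z\|$ with $Z\sim N(0,I_p)$ together with $QZ\diseq Z$. Once that invariance is in hand, the rest is linear algebra.
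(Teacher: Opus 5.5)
Your proof is correct and follows the paper's route: orthogonal invariance gives $QMQ^\top=M$, which forces $M=cI$, and the trace identity $\tr M=\E[\|u\|^2]=1$ then gives $c=1/p$. The only difference is how you show $M=cI$. The paper uses transitivity of the orthogonal group on $\mathbb{U}^{p-1}$ to make $x^\top M x$ constant on the sphere, while you read off the entries from sign flips and coordinate transpositions, which needs only invariance under signed permutations.
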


\subsection{Risk Measure}


In our framework, we use risk measure to capture the epistemic uncertainty in the parameters of the probability distributions. Let $(\Omega,\mathcal{F},\mathbb{P})$ be a probability space, and for $q\geq 1$, let
\[
    L^q\Let\left\{X:\Omega\to\R~\text{measurable}\mid \int_{\Omega} |X(\omega)|^q \mathbb{P}(\diff \omega) < \infty\right\}
\]
be the set of all measurable random variables defined over $(\Omega,\mathcal{F},\mathbb{P})$ with finite $q$-th order moments.
The norm defined over $L^q$ is 
 $   \|X\|_{L^q} \Let \left(\int_{\Omega} |X(\omega)|^q \mathbb{P}(\diff \omega)\right)^{1/q}.
$
    A functional $\rho: L^q \to \R$ is called a {\em monetary risk measure} if it satisfies 
        (a) (monotonicity) 
        for any $X, Y\in L^q$, if $X\ge Y$, then $\rho(X) \ge \rho(Y)$, where $X\ge Y$ means that $X(\omega)\ge Y(\omega)$ for any $\omega\in\Omega$; 
        (b) (translation invariance) for any $X\in L^q$ and $c\in\R$, $\rho(X+c) = \rho(X) + c$.
In particular, it is called a {\em convex risk measure}
   if it satisfies (c) (convexity) for any $X, Y\in L^q$ and $\lambda \in [0,1]$, $\rho(\lambda X + (1-\lambda)Y) \le \lambda\rho(X) + (1-\lambda)\rho(Y)$,
and {\em coherent risk measure}
if it further satisfies
(d)
(positive homogeneity) for any $X\in L^q$ and $\lambda\ge 0$, $\rho(\lambda X) = \lambda \rho(X)$, see \cite{ref:artzner1999coherent} 
and \cite{ref:follmer2002convex}.

A risk measure $\rho$ is 
said to be {\em law invariant} if $\rho(X) = \rho(Y)$ for any random variables $X,Y \in L^q$ with the same probability distribution. We note that any law invariant risk measure defined over a non-atomic $L^q$ space can be represented as a functional over the space of the probability distributions
 $   \mathscr{P}_q(\R)\Let \left\{\mu\in\mathscr{P}(\R):\int_{\R} |z|^q\mu(\diff z) < \infty\right\},
$
where $\mathscr{P}(\R)$ stands for the space of all possible probability distributions over $\R$. That is, there exists a functional $\varrho:\mathscr{P}_q(\R)\to \R$ such that 
$\rho(X) = \varrho(\mu)$ for any $X\in L^q$, where $\mu\in\mathscr{P}_q(\R)$ is the distribution of $X$. 
Throughout this paper, we assume that the risk measure $\rho$ under consideration is law invariant, convex, proper, and lower semicontinuous.
The examples below list several commonly used risk measures in the literature and applications, some of which will be mentioned or discussed later in this paper.
\begin{example} 
Let $X\in L^q$.
\begin{enumerate}[label = (\roman*)]
    \item (Expectation) 
    $\rho(X)\Let\E[X]$
    is a coherent risk measure.
    
    \item (Essential supremum) 
    $\rho(X)\Let \operatorname*{ess\,sup}_{\omega\in\Omega} X(\omega)$
    is a coherent risk measure.
    
    \item (Entropic risk measure) For $\alpha>0$, 
     $
        \rho_{\alpha}(X)
        \Let \frac{1}{\alpha}\log \E\left[e^{\alpha X}\right]
$
is 
a convex
risk measure.

    \item (Value-at-Risk) For a confidence level $\alpha\in(0,1)$, 
     $   \mathrm{VaR}_\alpha(X)
        \Let \inf\left\{m\in\R:\ \mathbb{P}(X\le m)\ge \alpha\right\}
        $
    is a risk measure (monotone and translation invariant), but it is not convex, hence not coherent.
    
    \item (Conditional Value-at-Risk) For a confidence level $\alpha\in(0,1)$, 
     $   \mathrm{CVaR}_\alpha(X) \Let \frac{1}{1-\alpha}\int_\alpha^1 \mathrm{VaR}_t(X) \diff t
    $ 
    
    is a coherent risk measure.

        \item (Optimized certainty equivalent,~\cite{ref:ben1986expected,ref:ben2007old}) Let $\ell:\R\to\R$ be a proper,
    convex, lower semicontinuous, and nondecreasing loss function satisfying
    $\ell(0)=0$.
     Then $
        \rho_{\ell}(X)     \Let
        \inf_{t\in\R}
        \left\{
            t+\E\left[\ell(X-t)\right]
        \right\}
$
 is 
  a convex risk measure but not coherent in general. 

    \item (Spectral risk measure,~\cite{ref:acerbi2002spectral}) Let $\phi:[0,1]\to\R_+$ be a nonnegative and 
    nondecreasing function satisfying
     $   \int_0^1 \phi(t)\diff t = 1.
    $ 
    Then 
     $   \rho_{\phi}(X)
        \Let
        \int_0^1 \mathrm{VaR}_t(X)\phi(t)\diff t 
    $
a coherent risk measure. In particular,
    $\mathrm{CVaR}_{\alpha}$ is a spectral risk measure with
  $      \phi(t)=\frac{1}{1-\alpha}\mathbf 1_{\{t\in[\alpha,1]\}}.
   $

\end{enumerate}

\end{example}

\section{Stochastic Optimization under Elliptical Distribution Ambiguity}\label{sec:model}

In this section, we propose our composite Bayesian risk--distributionally robust optimization (CBR--DRO) model for stochastic optimization problem with elliptical distribution ambiguity. Before that, we first review the basic idea of stochastic optimization, as well as two popular models in the literature that are used for decision-making under epistemic uncertainty in the underlying true distribution.

\subsection{Stochastic Optimization under Epistemic Uncertainty}\label{sec:SO-EU}

 We consider the following one-stage stochastic minimization problem: \begin{align}\label{eq:one-stage-SM-general} \min_{x\in \mathcal{X}} \E_{P}[f(x,\xi)], \end{align} where 
 $\xi$ is a random vector in $\mathbb{R}^p$ with distribution $P$,
 $\mathcal{X}\subset \mathbb{R}^d$ denotes the feasible set and $f:\mathcal{X}\times \mathbb{R}^p\to \mathbb{R}$ is the cost function. Throughout this paper, we assume that $\mathcal{X}$ is convex and compact, and that $f(\cdot,\xi)$ is convex for every fixed $\xi\in\mathbb{R}^p$. In practice, particularly in data-driven settings, the true distribution $P$ is typically unknown and only limited observational data of $\xi$ are available. To mitigate the decision-making risk arising from such epistemic uncertainty, one may construct an ambiguity set $\mathcal{M}$ based on historical data, such that the true distribution $P$ belongs to $\mathcal{M}$ with high confidence. This leads to the following distributionally robust optimization (DRO) formulation: \[ \min_{x\in\mathcal{X}} \max_{Q \in \mathcal{M}} \E_{Q}[f(x,\xi)], \] where the decision maker seeks a solution that performs best under the worst-case distribution within the ambiguity set. As a nonparametric framework, DRO has attracted considerable attention in recent years due to its flexibility in modeling distributional ambiguity and its tractability across a broad range of applications. Nevertheless, DRO is often criticized for being overly conservative, which can lead to poor out-of-sample performance, especially when the ambiguity set is chosen to be excessively large. We refer the reader to~\cite{rahimian2019distributionally,ref:kuhn2025distributionally} for comprehensive surveys of the DRO literature.

Another principal way to deal with the epistemic uncertainty in~\eqref{eq:one-stage-SM-general} is the Bayesian risk optimization (BRO) approach~\citep{ref:wu2018bayesian,ref:qian2019composite}. In this approach, the true distribution $P$ is supposed to belong to a parametric family $\{P_{\theta}: \theta\in \Theta\}$, where $\Theta$ is a finite dimensional parameter space, and there exists $\theta^\star\in \Theta$ such that $P=P_{\theta^\star}$. Then from a Bayesian perspective, the epistemic uncertainty on $P$ is captured by the randomness of our belief on $\theta$. Let $\theta$ be a random variable whose posterior distribution is inferred from the historical data of $\xi$. Then the objective of~\eqref{eq:one-stage-SM-general} becomes a random variable induced by the distribution of $\theta$, i.e., $\E_{P_{\theta}}[f(x,\xi)]$. To hedge against the uncertainty captured by the randomness of $\theta$, one may consider the following Bayesian risk optimization (BRO) problem:
\[
    \min_{x\in\mathcal{X}} \rho\left(\E_{P_{\theta}}[f(x,\xi)]\right),
\]
where the decision-making risk is modeled by some risk measure $\rho$. The BRO approach is suitable when the true distribution is believed to from a parametric family. Conversely, it is less appropriate when the distribution is nonparametric or when the distribution family is unknown.






\subsection{Composite Bayesian Risk--Distributionally Robust Optimization Model}

In this paper, we consider a one-stage stochastic optimization problem \eqref{eq:one-stage-SM-general} where
the random vector 
$\xi$  
%
is elliptically distributed specified as in \eqref{eq:EC-stochastic-rep}. 
In this setup, the stochastic program can be specifically formulated as 
\begin{align}\label{prob:stochastic-EC}
    \min_{x\in \mathcal{X}} \E_{\xi\sim \operatorname{EC}(\mu_0,\Sigma_0,P_0)}[f(x,\xi)].
\end{align}
We concentrate on the case that
the true values of $\mu_0,\Sigma_0$ 
and the true distribution $P_0$ are 
unknown, and we have to estimate the distribution of $\xi$ with sample data of $\xi$ and based on the estimation, approximate the solution of~\eqref{prob:stochastic-EC}. Because of limited data, the estimation can be inaccurate and the approximate solution may perform poorly. That is, again, we are confronted with decision-making risk caused by the epistemic uncertainty in the underlying elliptical distribution $\operatorname{EC}(\mu_0,\Sigma_0,P_0)$. We note that the epistemic uncertainty in the elliptical distribution can be 
decomposed into two parts: incomplete information about the true values of parameters $\mu_0$ and $\Sigma_0$, and incomplete information about the nonparametric true distribution $P_0$. To hedge
the modeling risk arising from 
the two types of uncertainty in an integrated model, 
we propose the following composite distributionally robust Bayesian risk optimization model:
\begin{align}\label{prob:CBR--DRO}
    \min_{x\in X} \rho_{(\mu,\Sigma)\sim \Pi_n}\left(\max_{Q \in \mathcal{M}(\mu,\Sigma)}  \E_{r\sim Q, u\sim \mathrm{Unif}(\mathbb{U}^{p-1})}[f(x,\mu+r\Sigma^{1/2}u)]\right),
    \tag{CBR--DRO}
\end{align}
where $\mathcal{M}(\mu,\Sigma)$ is an ambiguity set of probability distributions that comprises some plausible candidates for $P_0$.
The inner maximization problem
tackles the ambiguity of the true distribution $P_0$ by considering the worst-case over $\mathcal{M}(\mu,\Sigma)$.
The notation $\mathcal{M}(\mu,\Sigma)$ highlights that, although the true distribution $P_0$ itself does not depend on parameters $\mu_0$ and $\Sigma_0$, the construction of its ambiguity set may rely on our estimates of these quantities, i.e., $(\mu,\Sigma)$.
To see this, we note that
under the elliptical representation of $\xi$, we can write $r$ in terms of $\xi$ by $
    r = \|\Sigma_0^{-1/2}(\xi - \mu_0)\|,~~r\sim P_0.$
Since $(\mu_0,\Sigma_0)$ are unknown, we cannot obtain samples of $r$ based on the observations of $\xi$. 
Let $\Sigma$ and $\mu$
be an estimate of $(\mu_0,\Sigma_0)$ 
and let
$
    r_i \triangleq \|\Sigma^{-1/2}(\xi_i - \mu)\|,~~i=1,\ldots,n.$
We may treat $r_1,\cdots,r_n$
as
empirical proxies for the draws from $P_0$ and construct an empirical distribution 
$\wh P_n$. We can then construct an ambiguity set 
$\mathcal{M}(\mu,\Sigma)$ with $\wh P_n$. This explains why the ambiguity set depends on $(\mu,\Sigma)$. Note that, at this stage, we do not impose a specific structure on $\mathcal{M}(\mu,\Sigma)$. In principle, any data-driven ambiguity set can be incorporated into our framework, including Wasserstein-based~\citep{ref:mohajerin2018data,ref:gao2023distributionally}, moment-based~\citep{ref:delage2010distributionally}, and $\phi$-divergence-based ambiguity sets~\citep{ref:ben2013robust}.


Given a candidate $(\mu,\Sigma)$, the worst-case cost function of the inner DRO problem in~\eqref{prob:CBR--DRO} is denoted by
\begin{align}\label{def:F}
    F(x,\mu,\Sigma)\Let \max_{Q \in \mathcal{M}(\mu,\Sigma)}  \E_{r\sim Q, u\sim \mathrm{Unif}(\mathbb{U}^{p-1})}[f(x,\mu+r\Sigma^{1/2}u)].
\end{align}
In~\eqref{prob:CBR--DRO}, we further adopt a BRO approach to hedge against decision risk induced by the parametric uncertainty in $F(x,\mu,\Sigma)$, that is, the potential estimation error of $(\mu_0,\Sigma_0)$. Specifically, we treat $\mu$ and $\Sigma$ as random variables and construct posterior distributions $\Pi_n$ of them with observed sample data of $\xi$ to 
encode our belief about 
the underlying true parameters $\mu_0$ and $\Sigma_0$. 
This posterior uncertainty propagates to $F(x,\mu,\Sigma)$, which makes it a random variable. Finally, we apply a risk measure $\rho$ to $F(x,\mu,\Sigma)$ and consider 
\[
    \min_{x\in \mathcal{X}} \rho_{(\mu,\Sigma)\sim \Pi_n}\left(F(x,\mu,\Sigma)\right)
\]
to obtain decisions that are robust to the uncertainty in $(\mu,\Sigma)$.

\subsubsection{Comparison with Robust--Distributionally Robust Optimization}

To see the modeling advantage of~\eqref{prob:CBR--DRO}, we compare it with a more straightforward framework that utilizes both robust optimization and distributionally robust optimization to hedge against parametric uncertainty and nonparametric uncertainty in an integrated model, that is,
\begin{align}\label{prob:robust-dro}
    \min_{x\in X} \max_{(\mu,\Sigma) \in \mathcal{U}} F(x,\mu,\Sigma),
\end{align}
where $F(x,\mu,\Sigma)$ is the worst-case cost function that accounts for the distributional uncertainty defined as in~\eqref{def:F}, and an ambiguity set $\mathcal{U}\subset \R^p\times \PD$ is introduced to model the uncertainty in $\mu$ and $\Sigma$, and is believed to contain true parameters $(\mu_0,\Sigma_0)$. 
Although~\eqref{prob:robust-dro} offers a clear modeling framework, it may be numerically intractable since $F(x,\mu,\Sigma)$ is generally non-concave in $(\mu,\Sigma)$. Then, in view of~\eqref{prob:CBR--DRO}, it serves not only as a modeling alternative to~\eqref{prob:robust-dro}, but also as a computationally tractable approximation of it (the computational framework will be discussed in Section~\ref{sec:comp}).  To see how~\eqref{prob:CBR--DRO} approximates~\eqref{prob:robust-dro}, suppose that the posterior distribution of $(\mu,\Sigma)$ supports on $\mathcal{U}$.
Then taking $\rho(X)=\mathrm{CVaR}_\alpha(X)$ for some $\alpha\in (0,1)$, we consider a special case of~\eqref{prob:CBR--DRO}:
\begin{align}\label{prob:cvar-F}
    \min_{x\in X} \mathrm{CVaR}_\alpha\left(F(x,\mu,\Sigma)\right).
\end{align}
Since $\lim_{\alpha\to 1}\mathrm{CVaR}_\alpha(F(x,\mu,\Sigma)) =\max_{(\mu,\Sigma)\in\mathcal{U}} F(x,\mu,\Sigma)$ \citep[see][Theorem~1]{ref:anderson2020varying}, 
\eqref{prob:cvar-F} is an approximation of~\eqref{prob:robust-dro}, especially when $\alpha$ approaches $1$. 
Such approximation property also holds for many other risk measures, e.g., $\mathrm{VaR}_\alpha$, the entropic risk measure $\frac{1}{\eta}\log \mathbb{E}[e^{\eta X}]$, and more generally, law-invariant coherent risk measures. Indeed, for $\mathrm{CVaR}_\alpha$ and $\mathrm{VaR}_\alpha$, the risk measures focus on the upper $\alpha$-tail of $F(x,\mu,\Sigma)$, and thus converge to $\max_{(\mu,\Sigma)\in \mathcal{U}} F(x,\mu,\Sigma)$ as $\alpha\to 1$. For entropic risk measure, $\E[e^{\eta X}]$ is dominated by the largest value of $X$ as $\eta\to \infty$, and thus the risk measure also converges to the maximum. For law-invariant coherent risk measures, one can construct a sequence of the risk measures that converges to the worst case, since any of them admits a representation in terms of $\mathrm{CVaR}_\alpha$~\citep{ref:shapiro2013kusuoka}.

\section{Construction of Posterior Distribution}\label{sec:gibbs}

In this section, we discuss the construction of the posterior distribution of $(\mu,\Sigma)$.
Let $\xi_1,\ldots,\xi_n$ be i.i.d. samples of random vector $\xi$. Recall that, in the classical Bayesian setting where the distribution of $\xi$ is assumed to be parameterized by a parameter $\theta$ with density $p(\xi\mid\theta)$, the posterior distribution of $\theta$ is obtained via Bayes' rule:
\begin{align}\label{eq:bayes-rule}
    \pi(\theta\mid \xi_1,\ldots,\xi_n) \propto \pi(\theta)\prod_{i=1}^n p(\xi_i\mid \theta),
\end{align}
where $\pi(\theta)$ denotes the prior density of $\theta$. 
When $\xi$ is assumed to be elliptically distributed, the density function of $\xi$ admits the form $
    p(x) \propto g((x-\mu)^\transpose \Sigma ^{-1} (x-\mu)),$
where $g:\R_+\to \R_+$ is the density generator of the radial random variable $r$.
However, under the general elliptical distribution assumption, there is no access to an explicit expression of $g$, and thus the classical Bayes update rule~\eqref{eq:bayes-rule} 
is not applicable. 
Indeed, the expression of density generator is determined by the distribution of radius $r$ (see~\citet[Theorem 2.9]{ref:fang2018symmetric} for rigorous claim), and the latter is generally unknown for elliptical distribution. 
To deal with the issue, in the remaining parts of this section, we first recall a general Gibbs inference framework for optimal solutions of optimization models in Section~\ref{sec:gibbs-loss}, and then propose a novel estimating-equation-based Gibbs inference framework. We note that the location vector $\mu$ and shape matrix $\Sigma$ of an elliptical distribution can be associated to a pair of estimating equations, and thus based on the framework, admits a consistent Bayesian posterior update rule.

\subsection{Gibbs Posterior Inference Framework}\label{sec:bayes-infer}

The so-called {\it Gibbs posterior inference} is a posterior updating framework in which the parameter of interest is defined as the minimizer of a loss function. The classical Bayesian framework~\eqref{eq:bayes-rule} can be viewed as a special case of this approach when the loss function is chosen to be the negative log-likelihood. In this section, we propose a novel Gibbs-like posterior updating framework in which the parameter of interest is instead characterized as the root of an estimating equation, rather than as the minimizer of a loss function.
We emphasize that the proposed framework broadens the range of scenarios in which Gibbs posteriors can be applied. In particular, there exist settings in which a well-defined loss function is difficult or impossible to specify, while a valid estimating equation can still be naturally formulated.

The loss-function-based Gibbs posterior was first proposed in~\cite{ref:zhang2006information}, and discussed in~\cite{ref:bissiri2010bayesian, ref:bissiri2012bayesian, ref:bissiri2016general}. Theoretical guarantees for Gibbs posterior inference have been established in~\cite{ref:syring2023gibbs, ref:martin2022direct}.
In the remainder of this section, we first review the basic idea of Gibbs posterior inference. We then explain why the loss-function-based framework is inadequate for parameter estimation in elliptical distributions, and subsequently introduce our estimation-equation-based framework as a practical alternative.

\subsubsection{Gibbs Posterior Inference Based on Loss Function}\label{sec:gibbs-loss}

Let $\xi$ be a random vector in $\R^p$. Suppose that the parameter of interest $\theta_0$ can be written as the unique minimizer of an expected loss, i.e., 
\begin{align}
    \label{eq:theta_0}
    \theta_0 = \arg\min_{\theta\in \Theta} L(\theta) \triangleq \E[l(\theta,\xi)],
\end{align}
where the expectation is taken with respect to the distribution of $\xi$, and $l:\Theta\times \R^p\to \R$ is a loss function. For example, if $\Theta=\R^p$ and $l(\theta,\xi) = \|\theta-\xi\|^2$, then $\theta_0$ coincides with the mean vector of $\xi$; if the density function of $\xi$ is known to be $f(\xi,\theta')$ for some $\theta'\in \Theta$ and $l(\theta,\xi)=-\log f(\xi,\theta)$, i.e., the negative log-likelihood function, then the minimizer of $L(\theta)$ is $\theta_0=\theta'$ 
because
$L(\theta)-L(\theta')
=\dd_{\text{KL}}\!\left(f(\cdot,\theta'),f(\cdot,\theta)\right)\geq 0$, and equality holds when $\theta=\theta'$. 
Here and throughout the paper, $\dd_{\mathrm{KL}}$ denotes the Kullback--Leibler (KL) divergence between two probability distributions or their density functions.
Now let $\pi(\theta)$ be our prior beliefs about $\theta_0$, and $\xi_1,\ldots,\xi_n$ be i.i.d. samples of $\xi$. Our goal is to find a valid framework to update our beliefs to a posterior distribution $\pi(\theta\mid \xi_1,\ldots,\xi_n)$ based on the given samples, so that the posterior distribution can encode the information from both our prior and samples. One persuasive way to motivate the idea is to consider the following optimization problem:
\begin{align}\label{prob:nu-kl}
    \min_{\nu\in \mathscr{P}(\Theta)} \int_{\theta\in \Theta} \left[\frac{1}{n}\sum_{i=1}^n l(\theta,\xi_i)\right] \nu(\theta)\diff \theta + \frac{1}{\eta n}\dd_{\text{KL}}(\nu, \pi),
\end{align}
where $\mathscr{P}(\Theta)\triangleq\left\{\nu:\Theta\to \R_+\mid \int_{\theta\in\Theta}\nu(\theta)\diff\theta=1\right\}$ is the set of all density functions on $\Theta$.

The first term in the objective of~\eqref{prob:nu-kl}, which is essentially an integral of empirical loss $\frac{1}{n}\sum_{i=1}^n l(\theta,\xi_i)$, encodes the information from samples. 
The second term in~\eqref{prob:nu-kl} is used as a penalty term that prevents the optimal $\nu$ from being too far away from the prior $\pi$, and $\eta>0$ is a parameter that adjusts the strength of this penalty.
When the sample size is relatively small, the optimal solution of~\eqref{prob:nu-kl} balances the information from samples and prior belief $\pi$. 
When the sample size becomes large, the second term vanishes and 
$
    \int_{\theta\in \Theta} \left[\frac{1}{n}\sum_{i=1}^n l(\theta,\xi_i)\right] \nu(\theta)\diff \theta \approx \int_{\theta\in \Theta} L(\theta) \nu(\theta) \diff \theta.
$
Consequently, the minimizer of \eqref{prob:nu-kl} approaches the Dirac measure at $\theta=\theta_0$, denoted $D_{\theta_0}$, because $\theta=\theta_0$ uniquely solves $\min_{\theta\in\Theta} L(\theta)$
and
\bgeqn 
    \label{eq:theta_0-a}
    \theta_0 = \arg\min_{\theta\in \Theta} L(\theta) 
    \quad \Longleftrightarrow \quad 
    D_{\theta_0} =\arg\min_{\nu\in \mathscr{P}(\Theta)}
\mathbb{E}_\nu[L(\theta)].
\edeqn 
See \citet[Theorem 1]{ref:martin2022direct} for a rigorous claim of this observation.

Due to the 
specific structure of KL divergence, it can be shown \citep[see, e.g.,][]{ref:zhang2006information, ref:bissiri2016general} that the unique optimal solution of~\eqref{prob:nu-kl} admits a closed form:
\begin{align}\label{eq:gibbs-post}
    \nu^\star(\theta\mid \xi_1,\ldots,\xi_n) = \frac{\exp\left(-\eta\sum_{i=1}^nl(\theta,\xi_i)\right)\pi(\theta)}{\int_{\theta \in\Theta}\exp\left(-\eta\sum_{i=1}^nl(\theta,\xi_i)\right)\pi(\theta)\diff \theta},
\end{align}
which is called {\it Gibbs posterior} in the literature.
Intuitively,~\eqref{eq:gibbs-post} assigns more posterior density to values of $\theta$ 
where the empirical loss
$\sum_{i=1}^n l(\theta,\xi_i)$ is small, that is, to $\theta$ that provides a better fit to the observed samples. 
From the structure \eqref{eq:gibbs-post}, we can also see why the parameter $\eta>0$ is called the learning rate of the Gibbs posterior in the literature. Indeed, it visualizes the effect of the penalty term in \eqref{prob:nu-kl}. When $\eta$ is close to $0$, the penalty term dominates the objective function and the optimal solution remains close to the prior distribution $\pi(\theta)$; in this case, the posterior learns little from the samples. When $\eta$ is set very large (tends to $+\infty$), the penalty term vanishes and again, 
the optimal solution of \eqref{prob:nu-kl} 
approaches the Dirac measure at the optimal solution of SAA problem, for the same reason as in \eqref{eq:theta_0-a}.
In this case, the posterior is driven almost entirely by the samples. Note that when the loss function $l$ is set to be negative log-likelihood function and $\eta=1$,~\eqref{eq:gibbs-post} reduces to the classical Bayesian update formula~\eqref{eq:bayes-rule}. 

The Gibbs posterior generalizes the classical Bayesian posterior and is particularly useful when a precise statistical model for the parameter of interest is unavailable, for example, when the form of the underlying density function is unknown. 
However, for parameter estimation in elliptical distributions (and in many other practical settings), to figure out a valid loss function of the form in~\eqref{eq:theta_0} may be impossible. For example, to estimate the location parameter of an elliptical distribution, one may consider the loss function $L(\mu)=\E[\|\mu-\xi\|]$. As discussed in Remark~\ref{rmk:ellip}, however, the expectation of $\|\mu-\xi\|$ may not exist, even though the location parameter $\mu_0$ remains well-defined. 
A similar difficulty arises in the estimation of the shape parameter when the second moment of $\xi$ does not exist. This motivates us to propose a novel Gibbs posterior inference framework, which does not depend on a loss function, but on a general estimating equation.

\subsubsection{Gibbs Posterior Inference Based on Estimating Equation}
\label{sec:gibbs-est-eq}


Let the parameter space $\Theta$ be a subset of $E$, where $E$ is a finite-dimensional linear space equipped with a norm $\|\cdot\|$. 
Suppose 
that the parameter of interest $\theta_0\in\Theta$ is the unique root of the following estimating equation:
\begin{align}
\label{eq:Phi-ERENJ}
    \Phi(\theta) \triangleq\E[\varphi(\theta,\xi)]=0,
\end{align}
where $\varphi(\theta,\xi):\Theta\times \R^p\to \Theta$ is an estimation function. Under suitable regularity conditions, the above estimation-equation-based approach recovers the loss function based approach in~\eqref{eq:theta_0}. In particular, one can choose $\varphi(\theta,\xi)=\nabla_1 l(\theta,\xi)$, where $\nabla_1 l(\theta,\xi)$ denotes the gradient of $l$ with respect to its first argument. 
When the expectation and differentiation are interchangeable,
the unique root of~\eqref{eq:Phi-ERENJ} coincides with the optimal solution of~\eqref{eq:theta_0}. Moreover, there are settings in which the loss-function-based approach is not applicable, whereas the estimation-equation framework in~\eqref{eq:Phi-ERENJ} remains valid.

Consider, for example, the estimation of the location vector of an elliptically distributed random variable $\xi$. Although the first moment of $\xi$ may not exist, and thus the loss function $L(\mu)=\E[\|\xi-\mu\|]$ is not 
well-defined, 
we can define the estimation function
\begin{align}\label{eq:varphi-mu}
    \varphi_1(\mu,\xi) = \frac{\xi-\mu}{\|\xi-\mu\|}. 
\end{align}
The expectation of $\varphi(\mu,\xi)$ always exists, since $\|\varphi(\mu,\xi)\|=1$ for all $\mu$ and $\xi$.
The unique solution of $\E[\varphi(\mu,\xi)]=0$ is $\mu=\mu_0$, we will come back to this in  Proposition~\ref{prop:est-eq-mu} in Section~\ref{sec:est-eq-mu}.
Now let us focus on the construction of Gibbs posteriors based on estimating equation~\eqref{eq:Phi-ERENJ}.

Let $\xi_1,\ldots,\xi_n$ be $n$ 
i.i.d.~samples of $\xi$.
We begin by considering point estimation of $\theta_0$.
A point estimator of $\theta_0$ can be derived by solving the sample average approximation version of~\eqref{eq:Phi-ERENJ}:
\bgeqn 
\label{eq:Phi_n-ERENJ}
     \Phi_n(\theta)\triangleq \frac{1}{n}\sum_{i=1}^n \varphi(\theta,\xi_i) = 0.
\edeqn 
The root of the above equation  equals to the optimal solution of the following optimization problem:
 \bgeqn 
    \label{prob:min-varphi-saa}
    \min_{\theta\in\Theta}~~\left\| \Phi_n(\theta)\right\|,
\edeqn 
where 
$ \|\Phi_n(\theta)\|$
serves as the objective function that penalizes $\theta$ when it does not fit the samples. As $n$ tends to infinity, $\Phi_n(\theta)$ converges to $\Phi(\theta)$ and when the convergence is uniform, the optimal solution of~\eqref{prob:min-varphi-saa} converges to $\theta_0$, see e.g.~\cite{homem2008rates,xu2010uniform}.

Our focus here is not on point estimation, rather we concentrate on Bayesian approach. 
Specifically, we assign a probability
distribution on $\theta$, called prior distribution, based on available information or subjective judgment before observing samples of $\xi$.
The prior distribution reflects our 
belief about the likelihood of $\theta$ taking a particular value amid ambiguity of $\theta_0$. Let $\Pi$ denote the prior with density $\pi$. 
The next definition describes how the posterior distribution/density is updated under the Gibbs' framework.



\begin{definition}[Gibbs posterior and its 
consistency]\label{def:gibbs}
    Let the parameter of interest $\theta_0 \in \Theta$ be the unique solution of the estimating equation \eqref{eq:Phi-ERENJ}.
    Let $\Pi$ and $\pi$ be the prior distribution of the parameter and its density, respectively.
    The density of Gibbs posterior distribution with the learning rate $\eta>0$ based on $n$ samples $\xi_1,\ldots,\xi_n$, 
    written $\Pi_n^\eta(\cdot\mid \xi_1,\ldots,\xi_n)$, is defined as:
    \begin{align}\label{eq:post-eq-based}
        \pi_n^\eta(\theta\mid \xi_1,\ldots,\xi_n) = \frac{\exp\left(-\eta
        n\|\Phi_n(\theta)\|
        \right)\pi(\theta)}{\int_{\theta \in\Theta}
        \exp\left(-\eta
        n\|\Phi_n(\theta)\|
        \right)\pi(\theta)\diff \theta}.
    \end{align}
    Moreover, $\Pi_n^\eta$ is said to be strongly consistent if for any $\delta>0$, 
    \begin{align}
        \label{eq:thm1-theta-theta_0}
            \Pi_n^\eta(\{\theta: \|\theta-\theta_0\|\leq \delta\}\mid \xi_1,\ldots,\xi_n) \asto 1~\text{as}~n\to\infty.
    \end{align}
\end{definition}
From \eqref{eq:post-eq-based}, we observe that $ \frac{\exp\!\left(-\eta n\|\Phi_n(\theta)\|\right)} {\int_{\Theta} \exp\!\left(-\eta n\|\Phi_n(\theta)\|\right)\pi(\theta)\,\diff\theta}$ acts as a weighting factor. The term $\left\|\sum_{i=1}^n \varphi(\theta,\xi_i)\right\|$ can be viewed as a ``loss function'' that penalizes the unfitness of $\theta$, and the posterior distribution assigns more density when the ``loss'' is small.
Specifically, parameter values with smaller residuals $\|\Phi_n(\theta)\|$ receive larger weights and therefore contribute more heavily to the posterior distribution. Conversely, parameter values with larger residuals are assigned smaller weights. This behavior is consistent with \eqref{eq:Phi_n-ERENJ}, which suggests that $\theta$ tends to be closer to the estimator when the residual $\|\Phi_n(\theta)\|$ is small.
The learning rate $\eta$ is a hyperparameter of the Gibbs posterior framework as introduced in Section~\ref{sec:gibbs-loss}.
It turns out that any positive learning rate leads to a consistent posterior distribution. The following theorem formalizes the above arguments: under regularity conditions, the posterior distribution is consistent for any learning rate $\eta>0$. 

\begin{theorem}\label{thm:consist-gibbs}
    Let
    $\Phi$ and $\Phi_n$ be defined as in 
    \eqref{eq:Phi-ERENJ} and \eqref{eq:Phi_n-ERENJ} respectively.
    Let $\theta_0$ be 
    a 
    solution to~\eqref{eq:Phi-ERENJ}. 
   Assume:  (a) for any $\theta\in\Theta$, 
   $\E[
   \|\varphi(\theta,\xi)\|]< \infty$,
        (b) for any $\delta>0$,
        \begin{align}
            \liminf_{n\to\infty} \inf_{\theta\in\Theta:\|\theta-\theta_0\|\geq \delta}\|\Phi_n(\theta)\| > 0~~\text{a.s.},
        \end{align}
        (c) $\Pi(\{\theta: \|\Phi(\theta)\|\leq \delta\}) > 0$ for any $\delta>0$. 
    Then $\Pi_n^\eta(\cdot\mid \xi_1,\ldots,\xi_n)$ is strongly consistent in the sense of \eqref{eq:thm1-theta-theta_0} for any learning rate $\eta>0$. Moreover, for any $\delta>0$, there exists a constant $c>0$, such that 
    \begin{align}
        \label{ineq:as-event}
        \Pi_n^\eta\Big(\{\theta: \|\theta-\theta_0\|\geq \delta\}\mid \xi_1,\ldots,\xi_n\Big) \leq \exp(-c\eta n)\quad \text{a.s. eventually.}
    \end{align}
\end{theorem}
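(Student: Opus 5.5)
We bound the posterior mass of the bad set $B_\delta\Let\{\theta:\|\theta-\theta_0\|\ge\delta\}$ by splitting the Gibbs posterior ratio into a numerator and a denominator and estimating each on a single almost-sure event. Write
\[
\Pi_n^\eta(B_\delta\mid\xi_1,\ldots,\xi_n)=\frac{N_n}{D_n},\qquad
N_n\Let\int_{B_\delta} e^{-\eta n\|\Phi_n(\theta)\|}\pi(\theta)\,\diff\theta,\quad
D_n\Let\int_{\Theta} e^{-\eta n\|\Phi_n(\theta)\|}\pi(\theta)\,\diff\theta .
\]
We first fix $\delta>0$ and use assumption (b) to obtain a deterministic threshold. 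Let $\kappa(\omega)\Let\liminf_n\inf_{B_\delta}\|\Phi_n(\theta)\|>0$ a.s. To get a constant $c$ independent of $\omega$, we use that the event in (b) is a tail event of the i.i.d. sequence, at least when $\liminf$ is measurable. By Hewitt--Savage or Kolmogorov's 0--1 law, $\kappa$ is then a.s. equal to a constant $\kappa_0>0$. Failing that, we let $c$ depend on the sample path, which still reads as ``a.s. eventually.'' So a.s. for all large $n$, $\inf_{B_\delta}\|\Phi_n\|\ge \kappa_0/2$, which gives $N_n\le e^{-\eta n\kappa_0/2}\Pi(B_\delta)\le e^{-\eta n\kappa_0/2}$.

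For the denominator, the key step is a lower bound of the form $D_n\ge e^{-\eta n\epsilon}$ eventually, a.s., for any fixed $\epsilon>0$. Take $\epsilon=\kappa_0/8$ and let $A\Let\{\theta:\|\Phi(\theta)\|\le \epsilon\}$, which has $\Pi(A)>0$ by (c). On $A$ we have $\|\Phi_n(\theta)\|\le \epsilon+\|\Phi_n(\theta)-\Phi(\theta)\|$. We control the deviation on a set of positive prior mass, not uniformly. By (a) and the SLLN, for each fixed $\theta$, $\Phi_n(\theta)\to\Phi(\theta)$ a.s. By Fubini applied to the jointly measurable indicator of non-convergence, for a.e. $\omega$ the convergence holds for $\Pi$-a.e. $\theta\in A$. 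Egorov's theorem then gives, for that $\omega$, a subset $A'\subset A$ with $\Pi(A')\ge \Pi(A)/2$ on which $\sup_{A'}\|\Phi_n-\Phi\|\le\epsilon$ for large $n$. Hence
\[
D_n\ge \int_{A'} e^{-2\eta n\epsilon}\pi\,\diff\theta\ge \tfrac12\Pi(A)e^{-2\eta n\epsilon}.
\]

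Combining the two bounds,
\[
\Pi_n^\eta(B_\delta\mid\cdot)\le \frac{2}{\Pi(A)}e^{-\eta n(\kappa_0/2-\kappa_0/4)}\le e^{-c\eta n}
\]
eventually, with $c=\kappa_0/8$, since the prefactor $2/\Pi(A)$ is absorbed for large $n$. This is \eqref{ineq:as-event}, and letting $n\to\infty$ gives strong consistency \eqref{eq:thm1-theta-theta_0} for every $\eta>0$. Intersecting countably many full-measure events, over rational $\delta$, handles all $\delta$ simultaneously.

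The main obstacle is the denominator step. Pointwise SLLN from (a) does not directly give a lower bound on an integral over $\theta$, and we avoid assuming a uniform law of large numbers. The Fubini/Egorov device sketched above is the plan. Its delicate points are joint measurability of $(\theta,\omega)\mapsto\Phi_n(\theta)$ and the fact that the Egorov set depends on $\omega$. Neither dependence matters, because we only need a lower bound whose exponent rate is deterministic, and the $\omega$-dependent prefactor $\Pi(A')\ge\Pi(A)/2$ is deterministic anyway.
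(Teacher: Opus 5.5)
Your proposal is correct and follows essentially the same route as the paper: the numerator is controlled through condition (b), with the Hewitt--Savage zero--one law making the $\liminf$ a deterministic constant. (The $\liminf$ is invariant under finite permutations but is not a tail event in general, so Kolmogorov's law is not the right tool there.) The denominator is bounded below using condition (c) and the pointwise strong law. The only real difference is that you use Fubini plus Egorov where the paper applies Fatou's lemma directly. Your Fubini step makes explicit the handling of $\theta$-dependent null sets that the paper's Fatou argument relies on without saying so, and once that step is in place Fatou alone would also suffice.
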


Condition (b) requires that the residual function $\|\Phi_n(\theta)\|$ remain uniformly bounded away from zero whenever $\theta$ is separated from the true parameter $\theta_0$ by a positive distance. In other words, parameter values outside any neighborhood of $\theta_0$ cannot asymptotically yield arbitrarily small residuals. This property is consistent with \eqref{eq:Phi_n-ERENJ}, where $\theta_0$ is characterized by the vanishing of the residual. 
A sufficient condition for condition (b) is to assume that $\sup_{\theta\in \Theta} \|\Phi_n(\theta)-\Phi(\theta)\|\asto 0$ and $\inf_{\theta\in\Theta:\|\theta-\theta_0\|\geq \delta}\|\Phi(\theta)\|>0$ for any $\delta>0$, see \citet[Theorem~1]{ref:martin2022direct} for the loss function based case. 
However, the uniform law of large numbers over the whole parameter space can be difficult to verify, especially when the parameter space is not compact. It turns out that condition (b) is less restrictive and sufficient for the strong consistency, and will be useful in the proof of Theorem \ref{thm:gibbs-mu-sigma-consistency} in the next section.
Condition (c) requires that the prior assign positive probability to every neighborhood of the solution set characterized by small values of $\|\Phi(\theta)\|$. In particular, the support of the prior must contain the true parameter $\theta_0$. This assumption is analogous to the standard prior-support condition in Bayesian consistency theory and is consistent with \eqref{eq:Phi-ERENJ}.


\subsection{Posterior Distribution of $(\mu,\Sigma)$}\label{sec:gibbs-ellip}

Now we are ready to construct posterior distributions for the location vector $\mu_0$ and the shape matrix $\Sigma_0$ of an elliptically distributed random vector $\xi$, based on $n$ i.i.d. samples $\xi_1,\ldots,\xi_n$. In the following, we first present candidate estimating equations for point estimation of $\mu_0$ and $\Sigma_0$, respectively. We then apply the estimating-equation-based Gibbs-like framework developed in Section~\ref{sec:gibbs-est-eq} to construct the corresponding posterior distributions. Before turning to technical details, we shall highlight the key intuition: although the distribution of $\xi$ depends on the (generally unknown) distribution of $r$, and the first and the second moments of $\xi$ may not exist,  the location and shape parameters $\mu_0,\Sigma_0$ remain identifiable from samples of $\xi$. This is because $\mu_0$ and $\Sigma_0$ contain only the locational and directional information of $\xi$, and the effect of $r$ can be removed by constructing estimating equations that normalize out the radial magnitude.

\subsubsection{Estimating Equation for the Location Parameter}\label{sec:est-eq-mu}

As mentioned in~\eqref{eq:varphi-mu}, even when $\E[\|\xi\|]$ does not exist, the location parameter $\mu_0$ remains well-defined as the unique solution to
\[
    \E\left[\frac{\xi-\mu}{\|\xi-\mu\|}\right]=0.
\]
In fact, for an elliptically distributed random vector, $\mu_0$ can be interpreted as the center of symmetry of the distribution, and hence, the direction of the normalized random vector $(\xi-\mu_0)/\|\xi-\mu_0\|$ is balanced in expectation, i.e., $\E[(\xi-\mu_0)/\|\xi-\mu_0\|]=0$. The following proposition states the argument formally.

\begin{proposition}\label{prop:est-eq-mu}
    Let $\xi$ be an elliptically distributed random vector 
    defined as in \eqref{eq:EC-stochastic-rep}. 
    Define the estimating function by
    \bgeqn 
    \label{eq:phi_1-EDRO-Renjie}
        \varphi_1(\mu,\xi)\Let \frac{\xi-\mu}{\|\xi-\mu\|}.
    \edeqn 
    Then the following assertions hold for $p\geq 2$.
    \begin{enumerate}[label=(\roman*)]
        \item The unique solution of $\E[\varphi_1(\mu,\xi)]=0$ is given by $\mu=\mu_0$.
        \item For any $\delta>0$, $\inf_{\|\mu-\mu_0\|\geq \delta} \|\E[\varphi_1(\mu,\xi)]\| > 0$.
    \end{enumerate}
    
\end{proposition}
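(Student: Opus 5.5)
We first show that $\mu_0$ is a root, then that it is the only root, and finally upgrade uniqueness to the uniform bound in (ii) by a continuity and compactness argument.

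\textbf{Step 1: $\mu_0$ is a root.} Since $\xi$ has a bounded density, $\P(\xi=\mu)=0$ for every $\mu$, so $\varphi_1(\mu,\xi)$ is well defined almost surely. It is bounded by $1$, so $\Psi(\mu)\Let\E[\varphi_1(\mu,\xi)]$ exists for all $\mu$. Using the stochastic representation \eqref{eq:EC-stochastic-rep}, $\xi-\mu_0\diseq r\Sigma_0^{1/2}u$, and this vector is symmetric: $-u\diseq u$ and $u$ is independent of $r$. Hence
\[
\Psi(\mu_0)=\E\Big[\tfrac{\Sigma_0^{1/2}u}{\|\Sigma_0^{1/2}u\|}\Big]=0 .
\]

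\textbf{Step 2: uniqueness.} Fix $\mu\neq\mu_0$ and set $a=\mu-\mu_0$. The plan is to show $\langle \Psi(\mu),-a\rangle>0$ (or the same inequality in transformed coordinates).
\begin{itemize}
\item Write $Z=\xi-\mu_0$, which is centrally symmetric with a density, and $\langle \Psi(\mu),-a\rangle=\E\big[\langle Z-a,-a\rangle/\|Z-a\|\big]$.
\item Pair the points $z$ and $-z$, which have the same density. This gives
\[
2\langle\Psi(\mu),-a\rangle=\E\Big[\tfrac{\|a\|^2-\langle z,a\rangle}{\|z-a\|}+\tfrac{\|a\|^2+\langle z,a\rangle}{\|z+a\|}\Big]_{z=Z}.
\]
\item Show the bracket is strictly positive for a.e.\ $z$. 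The cleanest route is convexity: $g(\mu)\Let\E[\|Z-a\|-\|Z\|]$ is a well-defined (the integrand is bounded by $\|a\|$), convex function of $\mu$ with gradient $-\Psi(\mu)$.
\item Since $p\ge2$ and $Z$ has a density, $Z$ is not a.s.\ collinear with any line. The Euclidean norm is strictly convex along directions not parallel to $z-a$, so $g$ is strictly convex.
\item By symmetry $g$ is even in $a$. A strictly convex even function has the unique minimizer $a=0$, so its gradient vanishes only there. Hence $\Psi(\mu)\neq0$ for $\mu\ne\mu_0$.
\end{itemize}

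\textbf{Step 3: the uniform bound (ii).}
\begin{itemize}
\item \emph{Continuity.} $\Psi$ is continuous by dominated convergence: the integrand is bounded and continuous off the null set $\{\xi=\mu\}$. So on the compact annulus $\delta\le\|\mu-\mu_0\|\le R$, Step 2 gives a positive minimum of $\|\Psi\|$.
\item \emph{Behavior at infinity.} For $\|\mu\|\to\infty$ with direction $e=(\mu-\mu_0)/\|\mu-\mu_0\|$, we have $\varphi_1(\mu,\xi)\to -e$ pointwise. Bounded convergence, uniform in $e$ over the compact sphere, then gives $\|\Psi(\mu)\|\to1$. So $\|\Psi\|\ge 1/2$ for $\|\mu-\mu_0\|\ge R$ with $R$ large.
\end{itemize}
Combining the two regions yields (ii).

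The main obstacle is Step 2, specifically the strict convexity of $g$. This is where $p\ge2$ and the density assumption enter; for $p=1$ uniqueness can fail when the median is not unique. To get strict convexity I would argue that $t\mapsto\|z-a-tv\|$ is strictly convex unless $z$ lies on the line $a+\R v$, a Lebesgue-null set. Integrating against a set of positive probability then gives strictness.
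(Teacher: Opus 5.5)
Your proposal is correct and follows essentially the same route as the paper: the strictly convex potential $\E[\|\xi-\mu\|-\|\xi-\mu_0\|]$ with gradient $-\E[\varphi_1(\mu,\xi)]$, evenness from the central symmetry of $r\Sigma_0^{1/2}u$, and then, for part (ii), continuity on a compact annulus combined with $\|\E[\varphi_1(\mu,\xi)]\|\to 1$ as $\|\mu\|\to\infty$. The differences are cosmetic: you check $\E[\varphi_1(\mu_0,\xi)]=0$ directly by symmetry rather than reading it off as the minimizer of the even convex potential, and your pairing computation in Step~2 is an unused alternative, though it would also work, since the bracket equals $\bigl\langle a,\tfrac{z+a}{\|z+a\|}-\tfrac{z-a}{\|z-a\|}\bigr\rangle>0$ off the line $\R a$.
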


\subsubsection{Estimating Equation for the Shape Matrix}

In the following, we propose an estimating equation whose unique root is the shape matrix of the elliptical distribution. To motivate the construction of the estimating equation, we first consider the normalized random vector 
\[
    s \Let \frac{\xi-\mu_0}{\|\xi-\mu_0\|} = \frac{r\Sigma_0^{1/2}u}{\|r\Sigma_0^{1/2}u\|}=\frac{\Sigma_0^{1/2}u}{\|\Sigma_0^{1/2}u\|},
\]
which removes the impact of the radial component $r$ and captures the directional information of $\xi-\mu_0$. It is well-known that $s$ follows an angular central Gaussian (ACG) distribution with shape parameter $\Sigma_0$, whose density function is in the form $
    p(x\mid\Sigma_0) \propto \det(\Sigma_0)^{-1/2}(x^\transpose\Sigma_0^{-1}x)^{-p/2},~~x\in \mathbb{U}^{p-1}.$
Therefore, given $n$ i.i.d. samples of $s$, i.e., $s_1,\ldots,s_n$, the maximum likelihood estimator of $\Sigma_0$ is given by the optimal solution of the following problem:
\begin{align}\label{prob:acg-loglikeli-1}
    \min_{\Sigma\in \PD}\;\; \log\det(\Sigma)+\frac{p}{n}\sum_{i=1}^n \log\left(s_i^\transpose\Sigma^{-1}s_i\right).
\end{align}
It turns out that the optimal solution of~\eqref{prob:acg-loglikeli-1} coincides with Tyler's shape matrix estimator~\citep{ref:tyler1987distribution,ref:tyler1987statistical}, which is defined as the fixed point solution $\Sigma'$ to the equation:
\begin{align}\label{eq:tyler}
    \Sigma' = \frac{p}{n}\sum_{i=1}^n \frac{s_is_i^\transpose}{s_i^\transpose{\Sigma'}^{-1}s_i}. 
\end{align}
Note that, as sample size $n$ 
goes to infinity, the summation $\frac{1}{n}\sum_{i=1}^n\frac{s_is_i^\transpose}{s_i^\transpose\Sigma^{-1}s_i}$ tends to the expectation $\E\left[\frac{ss^\transpose}{s^\transpose \Sigma^{-1}s}\right]$. 
This fact gives us a natural candidate for the estimating function $\Sigma_0$ in our elliptical distribution setting, that is,
\begin{align}
\label{eq:phi_2-EDRO-Renjie}    \varphi_2(\mu_0,\Sigma,\xi)\Let \Sigma/p - \frac{(\xi-\mu_0)(\xi-\mu_0)^\transpose}{(\xi-\mu_0)^\transpose\Sigma^{-1}(\xi-\mu_0)}.
\end{align}
Recall that, as mentioned in Remark \ref{rmk:scale}, the scale of the shape matrix is not identifiable without a scale normalization. In particular, the following proposition states that, if the location vector is known to be $\mu_0$, then any root of $\E[\varphi_2(\mu_0,\Sigma,\xi)]=0$ must coincide with $\Sigma_0$ up to scale.

\begin{proposition}\label{prop:est-eq-sigma}
    Let $\xi$ be an elliptically distributed random vector 
    defined as in \eqref{eq:EC-stochastic-rep}. 
    Let the estimating function $\varphi_2$ be defined as in \eqref{eq:phi_2-EDRO-Renjie}. 
    Then 
    $\E[\varphi_2(\mu_0,\Sigma, \xi)]=0$ if and only if $\Sigma=c\Sigma_0$ for some constant $c\in\R_{++}$.  
    
\end{proposition}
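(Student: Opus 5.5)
The idea is to substitute the stochastic representation and reduce everything to an expectation over $u$ alone. With $\xi-\mu_0 = r\Sigma_0^{1/2}u$, the ratio in \eqref{eq:phi_2-EDRO-Renjie} is invariant to scaling its argument, so the factor $r$ cancels whenever $r>0$. The bounded density of $\xi$ gives $r>0$ a.s. Writing $A\Let\Sigma_0^{1/2}$ and $B\Let A\Sigma^{-1}A\in\PD$, we get
\[
\E[\varphi_2(\mu_0,\Sigma,\xi)] = \frac{\Sigma}{p} - A\,\E\!\left[\frac{uu^\top}{u^\top B u}\right]A.
\]
This expectation is always finite, since $\|uu^\top\|_{\F}/(u^\top Bu)\le 1/\lambda_{\min}(B)$. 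Multiplying on both sides by $A^{-1}$ and using $A^{-1}\Sigma A^{-1}=B^{-1}$, the equation $\E[\varphi_2]=0$ becomes
\[
B^{-1}/p = \E\!\left[\frac{uu^\top}{u^\top Bu}\right]=:M(B).
\]
The claim is therefore equivalent to: this holds iff $B=cI$ for some $c>0$, since $B=cI$ iff $\Sigma = c^{-1}\Sigma_0$.

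The \emph{if} direction is immediate. For $B=cI$ we have $u^\top Bu=c$, so by Proposition~\ref{prop:E-uu} $M(B)=I/(cp)=B^{-1}/p$.

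For \emph{only if}, I would first diagonalize: $B=O\Lambda O^\top$ with $O$ orthogonal and $\Lambda=\diag(\lambda_1,\ldots,\lambda_p)$. By rotation invariance of $u$ (replace $u$ by $Ou$), $M(B)=O M(\Lambda) O^\top$, so it suffices to treat diagonal $B=\Lambda$. Then $M(\Lambda)$ is diagonal, because the off-diagonal entries vanish under the sign flip $u_j\mapsto -u_j$. The fixed-point equation reduces to the scalar equations
\[
\frac{1}{p\lambda_j} = \E\!\left[\frac{u_j^2}{\sum_k \lambda_k u_k^2}\right],\quad j=1,\ldots,p.
\]
Suppose the $\lambda_j$ are not all equal, and pick $j$ with $\lambda_j=\lambda_{\max}$ strictly larger than some other eigenvalue. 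Multiplying the $j$-th equation by $\lambda_j$ gives $1/p=\E[\lambda_j u_j^2/\sum_k\lambda_k u_k^2]$. Now compare with $\lambda_j u_j^2/\sum_k\lambda_k u_k^2 \ge u_j^2/\sum_k u_k^2 = u_j^2$: this holds since $\sum_k\lambda_k u_k^2\le\lambda_j\sum_k u_k^2$. The inequality is strict on the positive-probability event where $u_j\neq0$ and some $u_k\neq0$ with $\lambda_k<\lambda_j$. Taking expectations and using $\E[u_j^2]=1/p$ (Proposition~\ref{prop:E-uu}) gives $1/p>1/p$, a contradiction. Hence $\Lambda=cI$, so $B=cI$.

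The main obstacle is the uniqueness (\emph{only if}) step, and the extremal-eigenvalue comparison above is the device I would use to handle it. Making the strictness rigorous needs only that the stated event has positive probability under the uniform law on the sphere, which is clear for $p\ge2$ (for $p=1$ every $\Sigma$ is trivially a multiple of $\Sigma_0$). The remaining care points are justifying $r>0$ a.s. and the integrability bound, both of which are routine.
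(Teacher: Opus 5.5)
Your proof is correct and follows essentially the same route as the paper: substitute the stochastic representation, reduce to the fixed-point equation $X^{-1}/p=\E[uu^\top/(u^\top Xu)]$ with $X=\Sigma_0^{1/2}\Sigma^{-1}\Sigma_0^{1/2}$, diagonalize using the rotation invariance of $u$, and get a contradiction from the diagonal entry at the largest eigenvalue via $\E[u_i^2]=1/p$. Your extra remarks ($r>0$ a.s.\ from the bounded density, the integrability bound, and the trivial $p=1$ case) are small refinements that the paper leaves implicit.
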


\subsubsection{Gibbs Posterior of $(\mu,\Sigma)$}

To specialize the framework introduced in Section \ref{sec:gibbs-est-eq} to the estimation of $\mu$ and $\Sigma$, we specify the underlying linear space $E$ and parameter space $\Theta$ in the first place.
Let $E \Let \mathbb{R}^p \times \mathbb{S}^p$ be the linear space in which the parameter pair $(\mu,\Sigma)$ is embedded, and equip it with the norm
\[
    \|(\mu,\Sigma)\|
    \triangleq
    \|\mu\|_2 + \|\Sigma\|_{\text{F}}.
\]
Since the shape matrix is only identifiable up to scale, here we fix the scale of it by requiring $\tr(\Sigma)=p$ and consider the parameter space $\Theta \Let \mathbb{R}^p \times \{\Sigma\in\mathbb{S}^p_{++}:\tr(\Sigma)=p\} \subset E$. As a direct consequence of Propositions~\ref{prop:est-eq-mu} and \ref{prop:est-eq-sigma}, the following proposition provides an estimating equation for the parameter pair $(\mu,\Sigma)$.

\begin{proposition}\label{prop:est-eq-mu-Sigma}
 Let $\xi$ be an elliptically distributed random vector 
    defined as in \eqref{eq:EC-stochastic-rep}. 
 Define the estimating function $\varphi:\Theta\times \R^p\to E$ by
    \bgeqn 
    \label{eq:phi-gibbs}
        \varphi(\mu,\Sigma,\xi) \Let \begin{pmatrix}
            \varphi_1(\mu,\xi)\\
            \varphi_2(\mu,\Sigma,\xi)
        \end{pmatrix},
    \edeqn 
    where $\varphi_1$ and $\varphi_2$ are defined as in Propositions~\ref{prop:est-eq-mu} and \ref{prop:est-eq-sigma}. 
    Then for $p \geq 2$,
    \bgeqn 
    \label{eq:expected-phi-ERENJ}
   \Phi(\mu,\Sigma)\Let  \E[\varphi(\mu,\Sigma,\xi)]=0
    \edeqn 
    has a unique solution $(\mu_0,\Sigma_0)$
    in $\Theta$.
\end{proposition}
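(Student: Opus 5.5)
The plan is to decouple the joint equation into its two block components and invoke Propositions~\ref{prop:est-eq-mu} and \ref{prop:est-eq-sigma} in sequence. Write $\Phi(\mu,\Sigma)=(\Phi_1(\mu),\Phi_2(\mu,\Sigma))$ with $\Phi_1(\mu)=\E[\varphi_1(\mu,\xi)]$ and $\Phi_2(\mu,\Sigma)=\E[\varphi_2(\mu,\Sigma,\xi)]$. The key structural observation is that the first block does not depend on $\Sigma$. Hence $\Phi(\mu,\Sigma)=0$ forces $\Phi_1(\mu)=0$, and Proposition~\ref{prop:est-eq-mu}(i) (valid for $p\ge 2$) gives $\mu=\mu_0$. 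Before this, I would check that $\Phi$ is well defined on all of $\Theta$. The $\varphi_1$ block has unit norm. The second term of $\varphi_2$ satisfies
\[
\Bigl\|\tfrac{(\xi-\mu)(\xi-\mu)^\top}{(\xi-\mu)^\top\Sigma^{-1}(\xi-\mu)}\Bigr\|_{\F}\le \lambda_{\max}(\Sigma),
\]
since $\|v\|^2/(v^\top\Sigma^{-1}v)\le\lambda_{\max}(\Sigma)$. Finally, $\xi\neq\mu$ almost surely because $\xi$ has a density, so the integrands are a.s. defined and bounded.

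With $\mu=\mu_0$ fixed, the remaining equation is $\E[\varphi_2(\mu_0,\Sigma,\xi)]=0$. By Proposition~\ref{prop:est-eq-sigma}, every solution is of the form $\Sigma=c\Sigma_0$ for some $c>0$. Imposing the normalization $\tr(\Sigma)=p$ built into $\Theta$, together with $\tr(\Sigma_0)=p$ from Remark~\ref{rmk:scale}, gives $cp=p$, hence $c=1$ and $\Sigma=\Sigma_0$. This proves uniqueness.

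For existence, I would verify directly that $(\mu_0,\Sigma_0)$ is a root. The first block vanishes by Proposition~\ref{prop:est-eq-mu}(i). For the second block, the "if" direction of Proposition~\ref{prop:est-eq-sigma} with $c=1$ applies. Alternatively, one can compute directly: substitute $\xi-\mu_0=r\Sigma_0^{1/2}u$, so that $r$ cancels and the matrix term becomes $\Sigma_0^{1/2}uu^\top\Sigma_0^{1/2}$. Proposition~\ref{prop:E-uu} then gives expectation $\Sigma_0/p$, which matches the $\Sigma/p$ term.

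There is no real obstacle here, since the statement is essentially a corollary. The only point requiring care is the well-definedness and integrability check above. That check is what justifies treating $\Phi$ as a map on all of $\Theta$, rather than only near the truth, before invoking the two propositions.
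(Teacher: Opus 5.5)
Your proposal is correct and takes the same route as the paper: Proposition~\ref{prop:est-eq-mu} pins down $\mu=\mu_0$, Proposition~\ref{prop:est-eq-sigma} then gives $\Sigma=c\Sigma_0$, the normalization $\tr(\Sigma)=p$ forces $c=1$, and existence comes from the ``if'' directions of the two propositions. Your extra integrability check, the bound $\lambda_{\max}(\Sigma)$ on the matrix term together with $\xi\neq\mu$ a.s., does not appear in the paper's proof of this proposition, but it matches the bound the paper uses when verifying condition (a) in the proof of Theorem~\ref{thm:gibbs-mu-sigma-consistency}.
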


Now we are ready to propose the estimating-equation-based Gibbs posterior for $(\mu,\Sigma)$. Let $\xi_1,\ldots,\xi_n$ be $n$ i.i.d. samples of $\xi$. Then the Gibbs posterior distribution (see Definition \ref{def:gibbs}) induced by the estimating function $\varphi(\mu,\Sigma,\xi)$ in Proposition \ref{prop:est-eq-mu-Sigma} is of density 
\begin{align}\label{eq:post-eq-based-1}
        \pi_n^\eta(\mu,\Sigma \mid \xi_1,\ldots,\xi_n) = \frac{\exp\left(-\eta\left\|\sum_{i=1}^n \varphi(\mu,\Sigma ,\xi_i)\right\|\right)\pi(\mu,\Sigma )}{\int_{\mu,\Sigma  \in\Theta}\exp\left(-\eta\left\|\sum_{i=1}^n \varphi(\mu,\Sigma ,\xi_i)\right\|\right)\pi(\mu,\Sigma)\diff (\mu,\Sigma)}.
    \end{align}
Moreover, the following Theorem shows that the above posterior distribution is consistent.

\begin{theorem}\label{thm:gibbs-mu-sigma-consistency}
    Let $\xi$ be an elliptically distributed random vector     defined as in \eqref{eq:EC-stochastic-rep}. Let $\varphi(\mu,\Sigma,\xi)$ be the estimating function defined as in \eqref{eq:phi-gibbs} and $\Pi_n(\cdot\mid\xi_1,\ldots,\xi_n)$ be the Gibbs posterior distribution based on $\varphi(\mu,\Sigma,\xi)$. Let $p\geq 2$. Assume that the prior distribution $\Pi$ satisfies that $\Pi(\{(\mu,\Sigma): \|\Phi(\mu,\Sigma)\|\leq \delta\}) > 0$ for any $\delta>0$. Then $\Pi_n^\eta$ is strongly consistent for any $\eta>0$, 
    that is, for any $\delta > 0$, 
    \begin{equation}
        \Pi_n^\eta\big(\{(\mu,\Sigma)\in\Theta:\|(\mu,\Sigma)-(\mu_0,\Sigma_0)\|\leq \delta\}\mid \xi_1,\ldots,\xi_n\big)
        \asto 1~~\text{as $n\to \infty$.}
    \end{equation}
    Moreover, for any $\delta>0$, there exists a constant $c>0$, such that 
    \begin{align}
        \label{ineq:as-event'}
        \Pi_n^\eta\Big(\{(\mu,\Sigma): \|(\mu,\Sigma)-(\mu_0,\Sigma_0)\|\geq \delta\}\mid \xi_1,\ldots,\xi_n\Big) \leq \exp(-c\eta n)\quad \text{a.s. eventually.}
    \end{align}
\end{theorem}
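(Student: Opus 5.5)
The plan is to apply Theorem~\ref{thm:consist-gibbs} with $\theta=(\mu,\Sigma)\in\Theta$ and the estimating function \eqref{eq:phi-gibbs}. Proposition~\ref{prop:est-eq-mu-Sigma} gives that $(\mu_0,\Sigma_0)$ is the unique root of $\Phi$ in $\Theta$.

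Condition (a) is immediate. We have $\|\varphi_1\|=1$. Moreover,
\[
\Bigl\|\tfrac{(\xi-\mu)(\xi-\mu)^\top}{(\xi-\mu)^\top\Sigma^{-1}(\xi-\mu)}\Bigr\|_{\F}=\tfrac{\|\xi-\mu\|^2}{(\xi-\mu)^\top\Sigma^{-1}(\xi-\mu)}\le\lambda_{\max}(\Sigma)\le p,
\]
so $\|\varphi_2\|_{\F}\le \|\Sigma\|_{\F}/p+p$. The point $\xi=\mu$ has probability zero because $\xi$ has a density. Condition (c) is exactly the hypothesis on the prior. Everything therefore reduces to verifying condition (b): for every $\delta>0$, almost surely $\liminf_n\inf_{\|(\mu,\Sigma)-(\mu_0,\Sigma_0)\|\ge\delta}\|\Phi_n(\mu,\Sigma)\|>0$.

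For (b) I split the complement of the $\delta$-ball using the norm $\|\mu\|+\|\Sigma\|_{\F}$. Since $\|\Phi_n\|\ge\|\Phi_{n,1}(\mu)\|$, where $\Phi_{n,1}=\frac1n\sum\varphi_1(\mu,\xi_i)$ does not involve $\Sigma$, the first region is $\{\|\mu-\mu_0\|\ge\delta/2\}$, handled by the $\mu$-component alone. Here I would prove a uniform strong law $\sup_{\mu\in\R^p}\|\Phi_{n,1}(\mu)-\Phi_1(\mu)\|\to0$ a.s. On a compact ball $\|\mu-\mu_0\|\le R$ this follows from a standard ULLN: $\varphi_1(\cdot,\xi)$ is continuous off the null set $\{\xi=\mu\}$ and bounded, so dominated convergence gives equicontinuity in mean, using bounded density. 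For $\|\mu-\mu_0\|>R$ the argument is direct. Pick $R$ so large that $P(\|\xi-\mu_0\|>R/2)$ is tiny. Then for most samples $\varphi_1(\mu,\xi_i)\approx(\mu_0-\mu)/\|\mu_0-\mu\|$, hence $\|\Phi_{n,1}(\mu)\|\ge 1-2\cdot(\text{fraction of far samples})-o(1)$, which is uniformly bounded below a.s. by the SLLN applied to the indicator. Combining this with Proposition~\ref{prop:est-eq-mu}(ii) yields a positive lower bound on the first region.

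The second region is $\{\|\mu-\mu_0\|<\delta/2,\ \|\Sigma-\Sigma_0\|_{\F}\ge\delta/2\}$. Here $\mu$ lies in a compact set, but $\Sigma$ ranges over the non-compact set $\{\Sigma\succ0,\tr\Sigma=p\}$, which is bounded but can approach singular matrices. This is the main obstacle. I would first handle the compact piece $\lambda_{\min}(\Sigma)\ge\epsilon$, where a ULLN holds for $\varphi_2$ jointly in $(\mu,\Sigma)$ by continuity and the bound above. Together with continuity of $\Phi$, uniqueness from Proposition~\ref{prop:est-eq-mu-Sigma}, and compactness, this gives $\inf\|\Phi\|>0$ there. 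In the above, I also restrict $\mu$ and may shrink $\delta$ if needed.

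Near the boundary $\lambda_{\min}(\Sigma)<\epsilon$, I would look at the second component. Let $v$ be a unit eigenvector for $\lambda_{\min}(\Sigma)$, and write $y_i=\xi_i-\mu$. Then
\[
v^\top\Phi_{n,2}v=\lambda_{\min}/p-\tfrac1n\sum (v^\top y_i)^2/(y_i^\top\Sigma^{-1}y_i).
\]
Each summand is at most $\lambda_{\min}$, so this quantity is small. Testing instead against the top eigenvector $w$ gives
\[
w^\top\Phi_{n,2}w=\lambda_{\max}/p-\tfrac1n\sum (w^\top y_i)^2/(y_i^\top\Sigma^{-1}y_i).
\]
Since $(w^\top y)^2/(y^\top\Sigma^{-1}y)\le\lambda_{\max}(w^\top y)^2/\|y\|^2\cdot(\ldots)$, this tends to be small when $\Sigma^{-1}$ blows up in other directions. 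So I expect $\frac1n\sum y_iy_i^\top/(y_i^\top\Sigma^{-1}y_i)\to0$ in trace as $\lambda_{\min}\to0$, except on samples concentrating near the eigenspace. A bounded density, applied uniformly over subspaces through a VC-type (Glivenko--Cantelli) argument for slabs, controls the mass near any lower-dimensional subspace. This makes the trace of the sum at most $\lambda_{\max}\cdot o(1)$, while $\tr(\Sigma/p)=1$. Hence $\|\Phi_{n,2}\|_{\F}\ge c>0$ uniformly for small $\epsilon$, a.s. eventually.

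Taking the minimum of the positive lower bounds over all regions gives (b). Theorem~\ref{thm:consist-gibbs} then yields strong consistency and the exponential bound \eqref{ineq:as-event'}.
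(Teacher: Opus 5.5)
Your proposal is correct, and at the top level it matches the paper. You reduce to condition (b) of Theorem~\ref{thm:consist-gibbs} and dispose of far-out $\mu$ using $\varphi_1$ alone, via tightness and the SLLN for an indicator. You handle a compact piece by a ULLN plus continuity of $\Phi$ and Proposition~\ref{prop:est-eq-mu-Sigma}, and you treat the degenerate piece where $\lambda_{\min}(\Sigma)$ is small separately. Your split by $\|\mu-\mu_0\|\ge\delta/2$ exploits that $\varphi_1$ does not depend on $\Sigma$; it is a tidy variant of the paper's split by $\|\mu\|\ge M$.

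The genuine difference is in the degenerate piece.
\begin{itemize}
\item \textbf{The paper's route.} It lower-bounds $\|\Sigma/p\|_{\F}\ge 1/\sqrt p$ and shows that the empirical average of $h(\mu,\Sigma,\xi_i)$ eventually has Frobenius norm at most $1/(2p)$. The tools are the regularization $\Sigma^{(c)}=(\Sigma+cI_p)/(1+c)$ and the Loewner bound $h(\mu,\Sigma,x)\preceq(1+c)\,h(\mu,\Sigma^{(c)},x)$. These move everything onto the compact set $\{\Sigma\succeq\frac{c}{1+c}I_p\}$, where Lemma~\ref{lemma:h-ULLN} already applies. What remains is the population fact $\sup\|\E h\|_{\F}\to0$ as $\lambda_{\min}\to0$, proved by compactness of $\{\Sigma\succeq0,\ \tr\Sigma=p\}$ and dominated convergence.
\item \textbf{Your route.} You compare traces, using $\tr(\Sigma/p)=1$ and $\|A\|_{\F}\ge|\tr A|/\sqrt p$, and work directly at the empirical level. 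The key is the pointwise bound $\tr h(\mu,\Sigma,\xi_i)\le\min\{p,\ \lambda_{\min}(\Sigma)\|y_i\|^2/(v^\top y_i)^2\}$, which is just your inequality $y^\top\Sigma^{-1}y\ge(v^\top y)^2/\lambda_{\min}$. Uniformity comes from a Glivenko--Cantelli theorem over the VC class of slabs.
\end{itemize}
Your version is more quantitative: the threshold $\epsilon$ is explicit in $\|f_\xi\|_\infty$ and the tails of $\xi$. The paper's version avoids empirical-process machinery by recycling its compact-set ULLN.

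When you write it out, tighten three points.
\begin{itemize}
\item The exceptional samples are those with $y_i$ angularly close to the hyperplane $v^\perp$, not to the $\lambda_{\min}$-eigenspace.
\item That exceptional set is a cone, so slabs control it only after truncating at $\|\xi_i-\mu\|\le R$. This gives $\limsup_n\sup\tr\bigl(\frac1n\sum_i h(\mu,\Sigma,\xi_i)\bigr)\le \epsilon R^2/t^2+p\bigl[\sup_{v,\mu}\prob(|v^\top(\xi-\mu)|<t)+\prob(\|\xi-\mu_0\|>R-\delta/2)\bigr]$. You then choose $R$ and $t$ first and $\epsilon$ last.
\item In the far-$\mu$ step, $\|\xi_i-\mu_0\|\le R/2<\|\mu-\mu_0\|/2$ only gives a projection bound of $1/3$ onto $(\mu_0-\mu)/\|\mu_0-\mu\|$, as in the paper, not $1-o(1)$. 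The resulting lower bound $\frac13-\frac43\cdot(\text{fraction of far samples})$ still suffices.
\end{itemize}
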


\section{Computational method to solve CBR--DRO}\label{sec:comp}

In this section, we develop efficient computational procedures for solving \eqref{prob:CBR--DRO}:
\begin{align}
    \min_{x\in \mathcal{X}} \rho_{(\mu,\Sigma)\sim \Pi_n}\left(F(x,\mu,\Sigma)\right),
\end{align}
where 
is $F$ the worst-case cost function of the inner DRO problem defined as in \eqref{def:F}.
We 
begin by establishing
standard convexity and sample average approximation results
in Sections \ref{sec:cvx-cbrdro} and \ref{sec:saa} and then propose a cutting plane method to solve \eqref{prob:CBR--DRO} in Section \ref{sec:cutting-alg}.


\subsection{Convexity of \eqref{prob:CBR--DRO}} \label{sec:cvx-cbrdro}

Throughout this section, we make the following assumption.


\begin{assumption}\label{ass:convex}
    $\mathcal{X}\subseteq\R^d$ is nonempty, compact, and convex;
    for every $y\in\R^p$, $f(\cdot,y)$ is finite and convex
    on $\R^d$; and $\rho:L^q\to\R$ is a law-invariant
    convex risk measure.
\end{assumption}

To ease the 
exposition, 
let ${v}(x) \triangleq \rho_{(\mu,\Sigma)\sim \Pi_n}\left(F(x,\mu,\Sigma)\right)$. The
next proposition 
ensures convexity of problem \eqref{prob:CBR--DRO}.

\begin{proposition}\label{prop:convexity}
    Under Assumption~\ref{ass:convex}, \eqref{prob:CBR--DRO} is a convex optimization problem in $x$. Moreover, if $\E_{(\mu,\Sigma)\sim \Pi_n}[|F(x,\mu,\Sigma)|^q]<\infty$ for all $x\in \R^d$, then ${v}(\cdot)$
    is Lipschitz continuous
    over $X$.
\end{proposition}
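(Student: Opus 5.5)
We first show that, for each fixed $(\mu,\Sigma)\in\Theta$, the map $x\mapsto F(x,\mu,\Sigma)$ is convex, and then pass convexity through $\rho$. For fixed $Q\in\mathcal{M}(\mu,\Sigma)$, the function $x\mapsto \E_{r\sim Q,u}[f(x,\mu+r\Sigma^{1/2}u)]$ is an expectation of the convex functions $f(\cdot,\mu+r\Sigma^{1/2}u)$ (Assumption~\ref{ass:convex}). It is therefore convex wherever it is finite. Since $\mathcal{M}(\mu,\Sigma)$ does not depend on $x$, $F(\cdot,\mu,\Sigma)$ is a pointwise supremum of convex functions and hence convex.

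Next, fix $x,y\in\R^d$ and $\lambda\in[0,1]$. Pointwise in $(\mu,\Sigma)$ we have
\[
F(\lambda x+(1-\lambda)y,\mu,\Sigma)\le \lambda F(x,\mu,\Sigma)+(1-\lambda)F(y,\mu,\Sigma).
\]
Applying monotonicity of $\rho$ and then convexity of $\rho$ gives $v(\lambda x+(1-\lambda)y)\le \lambda v(x)+(1-\lambda)v(y)$. Together with convexity and compactness of $\mathcal{X}$, this shows that \eqref{prob:CBR--DRO} is a convex program. Here we use that $F(x,\cdot,\cdot)$ is a random variable in $L^q$ under $\Pi_n$; this is exactly the moment assumption in the second part, and we will state it as implicitly needed for $v$ to be well defined in the first part.

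For Lipschitz continuity, the cleanest route is through finite convex functions. Under the moment assumption, $v$ is a real-valued convex function on all of $\R^d$, since $\rho:L^q\to\R$ is finite. A finite convex function on $\R^d$ is locally Lipschitz, and it is Lipschitz on any compact set, in particular on $\mathcal{X}$. The standard argument takes a slightly larger compact neighborhood $\mathcal{X}_1=\mathcal{X}+\bar B(0,1)$, on which $v$ is bounded above by the maximum of $v$ over the finitely many vertices of an enclosing cube, and bounded below by convexity. The usual estimate $|v(x)-v(y)|\le \frac{2\sup_{\mathcal{X}_1}|v|}{1}\|x-y\|$ for $x,y\in\mathcal{X}$ then follows.

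The main obstacle is verifying that $v$ is finite on all of $\R^d$, not merely on $\mathcal{X}$. This is what makes the convex-analysis shortcut work, and it is exactly why the hypothesis is imposed for all $x\in\R^d$. We would also remark on an alternative route if finiteness only held near $\mathcal{X}$: use that a convex, lower semicontinuous, finite risk measure on $L^q$ is Lipschitz on bounded sets (continuity of finite convex monotone functionals on Banach lattices), combined with a bound on $\|F(x,\cdot)-F(y,\cdot)\|_{L^q}$. That approach would require Lipschitz control of $f$ in $x$, which is not assumed, so we commit to the first argument.
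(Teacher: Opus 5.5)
Your proposal is correct and follows the same route as the paper. You show convexity of $F(\cdot,\mu,\Sigma)$ as a supremum of convex expectations and pass it through the monotonicity and convexity of $\rho$; you then get Lipschitz continuity on $\mathcal{X}$ because $v$ is a finite convex function on all of $\R^d$, and your explicit estimate on the enlarged set $\mathcal{X}+\bar B(0,1)$ simply spells out the standard convex-analysis fact the paper invokes in one line.
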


    To ensure that ${v}(x)$ is well-defined on $\mathcal{X}$, i.e., 
    $\E_{(\mu,\Sigma)\sim \Pi_n}[|F(x,\mu,\Sigma)|^q]<\infty$ for $x\in \R^p$, the following provides a sufficient condition. Suppose that for each $x\in\R^d$, there exist constants $C_x>0$ and $k\in \R_+$ such that $
        |f(x,y)| \le C_x(1+\|y\|^k), \forall\, y\in\R^p,$
    and that the ambiguity set $\mathcal{M}(\mu,\Sigma)$ only contains distributions with finite $k$-th moment, i.e.,
    $
        \sup_{Q\in\mathcal{M}(\mu,\Sigma)} \E_{r\sim Q}[|r|^k] \le m_k(\mu,\Sigma) <\infty.$
    Then $|F(x,\mu,\Sigma)| \le C_x'(1+\|\mu\|^k + \|\Sigma\|_{\F}^{k/2} m_k(\mu,\Sigma))$ for some constant $C_x'>0$, and the integrability condition $\E_{(\mu,\Sigma)\sim\Pi_n}[|F(x,\mu,\Sigma)|^q]<\infty$ reduces to a moment condition on $(\mu,\Sigma)$ under the posterior $\Pi_n$, i.e., 
    $\E_{(\mu,\Sigma)\sim \Pi_n}[\|\mu\|^{kq}+\|\Sigma\|_{\F}^{kq/2}m_k(\mu,\Sigma)^q] < \infty.$

\subsection{Sample Average Approximation}\label{sec:saa}

Although Proposition~\ref{prop:convexity} shows that \eqref{prob:CBR--DRO} is a convex optimization problem, there remain two main difficulties to solve the problem numerically: (a) the objective function ${v}(x)$ is a functional of the posterior distribution $\Pi_n$, and no closed-form of ${v}(x)$ is available, and (b) for each fixed $(\mu,\Sigma)$, $F(x,\mu,\Sigma)$ itself is an optimal value function, and evaluation of it requires one to solve a programming problem. Issue (b) may be tackled by introducing a first-order oracle for $F$. See the next section for details. To deal with (a), we turn to optimize a sample average approximation of ${v}(x)$:
\begin{align}
\label{eq:saa}
    \hat{v}_N(x) \;\Let\; \rho\left(\bigl\{F(x,\mu_i,\Sigma_i)\bigr\}_{i=1}^N\right),
\end{align}
where $\{(\mu_i,\Sigma_i)\}_{i=1}^N$ are i.i.d. samples drawn from $\Pi_n$. In the rest of this paper, $\rho\left(\bigl\{F(x,\mu_i,\Sigma_i)\bigr\}_{i=1}^N\right)$ stands for the risk of the random variable that is uniformly distributed over $\bigl\{F(x,\mu_i,\Sigma_i)\bigr\}_{i=1}^N$. In the following theorem, we provide a uniform convergence theorem for the sample average approximation in \eqref{eq:saa}, and thereby justify its use as a surrogate for ${v}(x)$ in the numerical framework.

\begin{theorem}\label{thm:unif-conv}
    Let $\rho:L^q\to \R$ be a law-invariant convex risk measure and $\xi\in \R^p$ be a random vector. Let $\mathcal{X}\subset \R^d$ be a compact set, and $g:\mathcal{X}\times \R^p\to \R$ be a function with $g(x,\xi)\in L^q$ for all $x\in \mathcal{X}$. Let $\xi_1,\ldots,\xi_N$ be i.i.d. copies of $\xi$.
    Let ${v}(x) \Let \rho(g(x,\xi))$ and $\widehat{{v}}_N(x)\Let \rho(\{g(x,\xi_i)\}_{i=1}^N)$. 
Assume:
(a) $\rho$ is $\kappa_1$-Lipschitz continuous for some constant $\kappa_1>0$, i.e., $
            |\rho(X_1)-\rho(X_2)|
            \le
            \kappa_1\|X_1-X_2\|_{L^q},
            \forall X_1,X_2\in \operatorname{dom}(\rho).$
    (b) $g(x,\xi)$ is continuous in $x$ almost surely, and there exists $\kappa_2(\xi)\in L^q$ such that for any $\xi\in\R^p$, $
            \left|g(x,\xi)-g(y,\xi)\right| \leq \kappa_2(\xi)\|x-y\|, \forall x,y\in\mathcal{X}.$
    Then 
        \bgeqn 
            \sup_{x\in\mathcal{X}} \left|{v}(x)-\widehat{{v}}_N(x)\right| \asto 0.
        \edeqn 
\end{theorem}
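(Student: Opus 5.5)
The plan is to reduce the uniform convergence of $\widehat v_N$ to a uniform law of large numbers in $L^q$ norm, using the Lipschitz property (a) of $\rho$. The difficulty is that $\rho(\{g(x,\xi_i)\}_{i=1}^N)$ is the risk of a random variable with the empirical law, while $v(x)$ uses the true law. Law invariance lets us view both as functionals $\varrho$ of distributions. To compare them in $L^q$, I will couple them on a common non-atomic probability space via quantile functions. Let $G_x$ and $G_{x,N}$ be the quantile functions of $g(x,\xi)$ and of the empirical law, and let $U$ be uniform on $(0,1)$. Then $v(x)=\rho(G_x(U))$ and $\widehat v_N(x)=\rho(G_{x,N}(U))$. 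By (a),
\[
|v(x)-\widehat v_N(x)|\le \kappa_1 \|G_x(U)-G_{x,N}(U)\|_{L^q}=\kappa_1\, W_q\bigl(\mathcal L(g(x,\xi)),\widehat{\mathcal L}_N(g(x,\cdot))\bigr),
\]
where $W_q$ is the one-dimensional $q$-Wasserstein distance. So it suffices to prove $\sup_{x\in\mathcal X} W_q(\mathcal L_x,\widehat{\mathcal L}_{x,N})\asto 0$.

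\textbf{Pointwise step.} For fixed $x$, $g(x,\xi)\in L^q$. By the strong law of large numbers together with Varadarajan's theorem, the empirical measure converges weakly a.s., and $\frac1N\sum_i|g(x,\xi_i)|^q\to \E|g(x,\xi)|^q$ a.s. Weak convergence plus convergence of $q$-th moments gives $W_q$ convergence a.s.

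\textbf{Uniformity step.} I will use compactness and the Lipschitz envelope (b). Fix $\varepsilon>0$ and take a finite $\varepsilon$-net $\{x^1,\dots,x^K\}$ of $\mathcal X$. For any $x$ with $\|x-x^k\|\le\varepsilon$, use the triangle inequality for $W_q$ and bound each perturbation term by the $L^q$ distance of a coupling (the natural one, same $\xi$ or same $\xi_i$):
\[
W_q(\mathcal L_x,\mathcal L_{x^k})\le \|g(x,\xi)-g(x^k,\xi)\|_{L^q}\le \varepsilon\|\kappa_2(\xi)\|_{L^q},
\]
\[
W_q(\widehat{\mathcal L}_{x,N},\widehat{\mathcal L}_{x^k,N})\le \Bigl(\tfrac1N\sum_i|g(x,\xi_i)-g(x^k,\xi_i)|^q\Bigr)^{1/q}\le \varepsilon\Bigl(\tfrac1N\sum_i\kappa_2(\xi_i)^q\Bigr)^{1/q}.
\]
The last average converges a.s. to $\E[\kappa_2^q]$ by the SLLN. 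Combining these with the pointwise result at the finitely many net points gives $\limsup_N\sup_x W_q\le 2\varepsilon\|\kappa_2\|_{L^q}$ a.s. Taking $\varepsilon=1/m$ over a countable sequence $m\to\infty$ finishes the proof.

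\textbf{Main obstacle.} I expect the main obstacle to be the first reduction, namely that $\rho$ evaluated on an empirical (finite, atomic) distribution is compared legitimately with $\rho$ on the true law through a single $L^q$ space on which (a) applies. I would handle this with the quantile coupling on a non-atomic space, which the paper's law-invariance representation $\varrho$ permits. The moment-convergence part of the pointwise step also needs care; I would handle it by truncation if direct citation is unavailable.
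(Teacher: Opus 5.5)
Your proposal is correct and follows essentially the same route as the paper. You use the Lipschitz property (a) and law invariance to reduce the claim to $\sup_{x\in\mathcal{X}} W_q$ between the empirical and true laws of $g(x,\xi)$, then control it with a finite $\varepsilon$-net, the $W_q$ triangle inequality, the same-sample couplings bounded via $\kappa_2$, and pointwise a.s.\ $W_q$ convergence at the net points. The only difference is cosmetic: the paper takes the infimum of $\kappa_1\|X-Y\|_{L^q}$ over all couplings, while you use the explicit quantile coupling, which is optimal in one dimension; your version also makes the non-atomic-space requirement and the justification of pointwise $W_q$ convergence a little more explicit than the paper does.
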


Condition (a) in Theorem~\ref{thm:unif-conv} requires that $\rho$ is Lipschitz continuous.
Several risk measures in the literature satisfy the condition. For example, it's known that CVaR, or more generally, spectral risk measure~\citep{ref:pichler2013natural}, satisfy condition (a). Moreover, it is also easy to verify that
an optimized certainty equivalent risk measure $
    \rho_\ell(X)=\inf_{t\in\R}\{t+\E[\ell(X-t)]\} $
satisfies condition (a) if $\ell$ is globally Lipschitz continuous. See \citet[Lemma 12]{ref:prashanth2022wasserstein} for a proof.

\subsection{Cutting Plane Algorithm}\label{sec:cutting-alg}

In this section, we propose the cutting plane method that solves \eqref{prob:CBR--DRO}. The algorithm is based on a first-order oracle for the inner value function $F(x,\mu,\Sigma)$, 
which is specified by the following assumption.

\begin{assumption}[First-order oracle for $F$]\label{ass:oracle}
    A first-order oracle for $F$ is available, that is, for fixed $\mu\in\R^p$ and $\Sigma\in \PD$, the value of $F(x, \mu, \Sigma)$ and the gradient or a subgradient of $F$ with respect to $x$ can be computed for any $x\in \mathcal{X}$.
\end{assumption}

We now explain how Assumption~\ref{ass:oracle} can be fulfilled in practice. Recall from \eqref{def:F} that $F(x,\mu,\Sigma)$ is defined as the optimal value of the inner distributionally robust problem, which is a one-dimensional DRO problem over the distribution of the radial variable $r$. The computation of the optimal value of a DRO problem has been extensively studied for commonly used ambiguity sets, including Wasserstein balls \citep{ref:mohajerin2018data,ref:gao2023distributionally}, moment-based sets \citep{ref:delage2010distributionally}, and $\phi$-divergence balls \citep{ref:ben2013robust}.

For the subgradient, let $\mathcal{Q}^*(x,\mu,\Sigma)$ denote the set of optimal solutions of \eqref{def:F}. Since $f(\cdot,y)$ is convex, $F(\cdot,\mu,\Sigma)$ is convex as a pointwise supremum of convex functions.  
Under regularity assumptions, it follows by Danskin's theorem
\begin{equation}\label{eq:F-subdiff}
    \partial_x F(x,\mu,\Sigma)
    = \mathrm{conv}\Big\{
        \E_{r\sim Q^*,\, u\sim \mathrm{Unif}(\mathbb{U}^{p-1})}
        \big[\nabla_x f(x,\mu+r\Sigma^{1/2}u)\big]
        : Q^* \in \mathcal{Q}^*(x,\mu,\Sigma)
    \Big\}.
\end{equation}
Thus $F(\cdot,\mu,\Sigma)$ is differentiable at $x$ whenever the worst-case distribution is unique, and may be nondifferentiable, but admits subgradients by \eqref{eq:F-subdiff} when $\mathcal{Q}^*(x,\mu,\Sigma)$ is not a singleton.

Now we are ready to propose the cutting plane method for \eqref{prob:CBR--DRO} and its convergence theorem.

\begin{algorithm}[htbp]
\caption{Cutting Plane Method for \eqref{prob:CBR--DRO}}
\label{alg:cutting-alg}
\begin{algorithmic}[1]
\State Set precision $\epsilon$ for stopping.
\State Draw i.i.d. samples  $\{(\mu_i,\Sigma_i)\}_{i=1}^N$ with the posterior distribution $\Pi_n$.
\State Choose initial point $x_0\in\mathcal{X}$, set $\mathrm{UB}_0\gets +\infty$, $\mathrm{LB}_0\gets -\infty$, $k\gets 0$
\Repeat
    \For{$i=1,\ldots,N$}
        \State Obtain the value of $F(x_k,\mu_i,\Sigma_i)$ and the subgradient $\nabla_x F(x_k,\mu_i,\Sigma_i)$ from the DRO solver. Denote them by $(F_{i,k}, g_{i,k})$.
        \State Define a cut by $
            L_{i,k}(x) \Let F_{i,k} + \la g_{i,k}, x-x_k\ra,~\forall x\in\mathcal{X}.$
    \EndFor
    \State Compute $\hat{{v}}_N(x_k) =  \rho\left(\{F(x_k,\mu_i,\Sigma_i)\}_{i=1}^N\right)$.
    \State $\mathrm{UB}_{k+1}\gets \min\big(\mathrm{UB}_k,\;\hat{{v}}_N(x_k)\big)$
    
    \State Solve
    \begin{equation}\label{eq:master}
    \begin{aligned}
        x_{k+1} \;\gets\;
        \argmin_{x\in\mathcal{X},\,\hat{F}_i\in\R}
        \;\; & \; \rho\bigl(\{\hat{F}_i\}_{i=1}^N\bigr)\\
        \text{s.t.}\;\;
        & \; \hat{F}_i \;\ge\; F_{i,j} + \bigl\la g_{i,j},x-x_j\bigr\ra,
          \quad i=1,\ldots,N,\;\; j=0,\ldots,k.
    \end{aligned}
\end{equation}
Let $\mathrm{LB}_{k+1}$ be equal to the optimal value of \eqref{eq:master}.
    \State $k\gets k+1$
\Until{$\mathrm{UB}_{k}-\mathrm{LB}_{k}\le \varepsilon$}
\Return $x^* \gets \argmin_{x_j,j=0,\ldots,k-1} \hat{{v}}_N(x_j)$
\end{algorithmic}
\end{algorithm}

\begin{theorem}[Convergence of Algorithm~\ref{alg:cutting-alg}]\label{thm:cutting-convergence}
Suppose that Assumptions~\ref{ass:convex} and~\ref{ass:oracle} hold, and that for
each $i = 1,\ldots,N$, the function $F(\cdot,\mu_i,\Sigma_i)$ is finite valued on $\R^d$. Let $\hat{v}_N^* \Let \min_{x\in\mathcal{X}} \hat{v}_N(x)$, and let $\{x_k\}$,
$\{\mathrm{UB}_k\}$, and $\{\mathrm{LB}_k\}$ be generated by
Algorithm~\ref{alg:cutting-alg} with tolerance $\varepsilon \ge 0$. Then the
following assertions hold.
\begin{enumerate}[label=(\roman*)]
    \item For every $k \ge 1$,
    $\mathrm{LB}_k \le \hat{v}_N^* \le \mathrm{UB}_k$; moreover,
    $\{\mathrm{UB}_k\}$ is nonincreasing and $\{\mathrm{LB}_k\}$ is nondecreasing.
    \item If 
    $\varepsilon=0$, then $\mathrm{UB}_k - \mathrm{LB}_k \to 0$ as $k\to\infty$. Consequently,
    $\mathrm{UB}_k \downarrow \hat{v}_N^*$, $\mathrm{LB}_k \uparrow \hat{v}_N^*$,
    and every accumulation point of $\{x_k\}$ is an optimal solution of
    $\min_{x\in\mathcal{X}} \hat{v}_N(x)$.
    \item If $\varepsilon > 0$, then Algorithm~\ref{alg:cutting-alg} terminates in
    finitely many iterations, and the returned point $x^*$ satisfies
    $\hat{v}_N(x^*) \le \hat{v}_N^* + \varepsilon$.
\end{enumerate}
\end{theorem}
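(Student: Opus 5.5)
The proof follows the classical analysis of Kelley's method, adapted to the scenario-wise cuts. Set $h_i(x)\Let F(x,\mu_i,\Sigma_i)$; each $h_i$ is convex and finite on $\R^d$. Since $\mathcal{X}$ is compact, each $h_i$ is Lipschitz on a neighborhood of $\mathcal{X}$ with some constant $M_i$, and its subgradients there are bounded, $\|g_{i,k}\|\le M\Let\max_i M_i$. Denote the cutting-plane model by
\[
m_k(x)\Let\rho\bigl(\{\max_{0\le j\le k} L_{i,j}(x)\}_{i=1}^N\bigr).
\]
By monotonicity of $\rho$, minimizing over the $\hat F_i$ in \eqref{eq:master} sets $\hat F_i=\max_{j\le k} L_{i,j}(x)$. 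Hence $\mathrm{LB}_{k+1}=\min_{x\in\mathcal{X}} m_k(x)$, attained at $x_{k+1}$. The minimum exists because $m_k$ is convex and finite on the compact set $\mathcal{X}$, the finite-support $\rho$ being a convex, monotone and finite function on $\R^N$.

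\textbf{Part (i).} Convexity gives $L_{i,j}(x)\le h_i(x)$, so the model is a minorant: $\max_j L_{i,j}\le h_i$ for each $i$. Monotonicity of $\rho$ then yields $m_k\le \hat v_N$, and hence $\mathrm{LB}_{k+1}\le\hat v_N^*$. Adding cuts can only increase $m_k$ pointwise, so $\mathrm{LB}_k$ is nondecreasing. $\mathrm{UB}_k$ is nonincreasing by its definition as a running minimum, and $\mathrm{UB}_k\ge \hat v_N^*$ because each $\hat v_N(x_j)$ is the objective value at a feasible point. (For $k\ge1$, $\mathrm{LB}_k$ is finite.)

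\textbf{Part (ii), the main step.} First I need a Lipschitz property of $\rho$ on $\R^N$ with respect to $\|\cdot\|_\infty$. For a monotone, translation-invariant $\rho$, if $a\le b+\|a-b\|_\infty\mathbf 1$, then
\[
\rho(a)\le\rho(b)+\|a-b\|_\infty,
\]
so $\rho$ is $1$-Lipschitz in the sup norm. This holds with no further assumptions, which is why monetary risk measures are convenient here.

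Next, compare $\hat v_N(x_k)$ with the model value $m_{k-1}(x_k)=\mathrm{LB}_k$. Fix $j<k$. The cut at $x_j$ gives $\max_{l\le k-1}L_{i,l}(x_k)\ge L_{i,j}(x_k)\ge h_i(x_k)-2M\|x_k-x_j\|$. Using the Lipschitz property of $\rho$,
\[
\mathrm{UB}_{k+1}-\mathrm{LB}_k\le \hat v_N(x_k)-m_{k-1}(x_k)\le 2M\min_{j<k}\|x_k-x_j\|.
\]
The sequence $\{x_k\}$ lies in the compact set $\mathcal{X}$, so for any $\epsilon>0$ infinitely many $k$ have $\min_{j<k}\|x_k-x_j\|<\epsilon$. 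Thus the gap $\mathrm{UB}_{k+1}-\mathrm{LB}_k$ along these $k$ is at most $2M\epsilon$. The gap sequence $\mathrm{UB}_k-\mathrm{LB}_k$ is monotone nonincreasing, so it tends to $0$. Combined with part (i), this gives $\mathrm{UB}_k\downarrow\hat v_N^*$ and $\mathrm{LB}_k\uparrow\hat v_N^*$.

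For accumulation points, let $x_{k_t}\to\bar x$. Then $\hat v_N(x_{k_t})\le m_{k_t-1}(x_{k_t})+2M\|x_{k_t}-x_{k_{t-1}}\|$, the first term being $\mathrm{LB}_{k_t}\le\hat v_N^*$. Letting $t\to\infty$ and using continuity of $\hat v_N$ gives $\hat v_N(\bar x)\le\hat v_N^*$, so $\bar x$ is optimal.

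\textbf{Part (iii).} If $\varepsilon>0$, the gap tends to $0$ by the argument of part (ii), which did not use $\varepsilon=0$. Hence termination occurs at a finite $k$. The returned $x^*$ attains the minimum of $\hat v_N(x_j)$ over $j\le k-1$, which equals $\mathrm{UB}_k$. Therefore
\[
\hat v_N(x^*)=\mathrm{UB}_k\le\mathrm{LB}_k+\varepsilon\le\hat v_N^*+\varepsilon.
\]

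The main obstacle is the gap estimate in part (ii), since it requires linking the scenario-wise cuts to the aggregated value through $\rho$. This is exactly what the sup-norm $1$-Lipschitz property of monetary risk measures supplies.
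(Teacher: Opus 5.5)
Your proof is correct and follows essentially the same Kelley-type argument as the paper. It uses the sup-norm $1$-Lipschitz property of the monetary risk measure, bounded subgradients on $\mathcal{X}$, the identification $\mathrm{LB}_{k+1}=\min_{x\in\mathcal{X}} m_k(x)$, a gap bound proportional to the distance from an iterate to an earlier one, and compactness plus monotonicity of the gap. The differences are only cosmetic: you compare $\hat v_N(x_k)$ with $m_{k-1}(x_k)$, which yields the constant $2M$ and the mixed gap $\mathrm{UB}_{k+1}-\mathrm{LB}_k$ (this still dominates $\mathrm{UB}_{k+1}-\mathrm{LB}_{k+1}$ because $\mathrm{LB}$ is nondecreasing), whereas the paper evaluates the cut from $x_j$ at $x_{k+1}$ against $\hat v_N(x_j)$ and obtains $G\|x_{k+1}-x_j\|$ directly.
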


\begin{remark}[A tighter lower bound]\label{rmk:tighter-lb}
Algorithm~\ref{alg:cutting-alg} differs from the classical Kelley
cutting-plane method~\citep{ref:kelley1960cutting} in the way the lower bound of the objective function is successively constructed. In the classical cutting-plane method, the lower bound of the objective function $\hat{{v}}_N(x)$ is given by a piecewise-linear function
\begin{equation}\label{eq:kelley-lb}
    \hat v_N(x) \ge \max_{j=0,1,\ldots,k}
    \big\{\hat v_N(x_j) + \langle g_j^N, x - x_j\rangle\big\},
    \qquad g_j^N \in \partial \hat v_N(x_j).
\end{equation}
Algorithm~\ref{alg:cutting-alg} instead builds a scenario-wise piecewise-linear
lower bound of $F(\cdot,\mu_i,\Sigma_i)$,
\begin{equation}\label{eq:scenario-lb}
    F(x,\mu_i,\Sigma_i) \ge \hat F_i(x) \triangleq \max_{j=0,1,\ldots,k}
    \big\{F_{i,j} + \langle g_{i,j}, x - x_j\rangle\big\},
    \qquad i = 1,\ldots,N.
\end{equation}
By the monotonicity of $\rho$ (Assumption~\ref{ass:convex}), a global lower bound on $\hat{{v}}_N(x)$ is given by
\begin{equation}\label{eq:our-lb}
    \hat v_N(x) \ge \rho\big(\{\hat F_i(x)\}_{i=1}^N\big).
\end{equation}
We claim that in practice, \eqref{eq:our-lb} is tighter than \eqref{eq:kelley-lb}. To see this, define $
    h_j(x) \triangleq \rho\big(\{F_{i,j} + \langle g_{i,j}, x - x_j\rangle\}_{i=1}^N\big)$,
which is convex as the composition of 
the monotone convex functional 
$\rho$ with affine mappings.
By the monotonicity of $\rho$, 
$
    h_j(x) \leq \hat{v}_N(x),\quad\forall x\in\mathcal{X}
$
and $h_j(x_j) = \hat{v}_N(x_j)$,
which implies that $\partial h_j(x_j) \subseteq \partial \hat v_N(x_j)$. We assume that the Kelley subgradient in~\eqref{eq:kelley-lb} is taken to be $g_j^N \in \partial h_j(x_j)$, which is the natural choice in practice, since it is precisely the subgradient produced by the chain rule from the scenario data $(F_{i,j}, g_{i,j})$ used in Algorithm \ref{alg:cutting-alg}. Then, for every $x \in \mathcal{X}$,
\begin{equation}\label{eq:lb-comparison}
    \rho\big(\{\hat F_i(x)\}_{i=1}^N\big)
    \;\ge\; \max_{j=0,1,\ldots,k} h_j(x)
    \;\ge\; \max_{j=0,1,\ldots,k}
    \big\{\hat v_N(x_j) + \langle g_j^N, x - x_j\rangle\big\},
\end{equation}
where the first inequality is by the monotonicity of $\rho$
and the fact that $\max_{j=0,1,\ldots,k}\,\bigl\{F_{i,j}+\la g_{i,j},\,x-x_j\ra\bigr\} \geq F_{i,j}+\la g_{i,j},\,x-x_j\ra$, and the second inequality is by the first order condition of convexity, i.e., $
    h_j(x)\geq h_j(x_j)+\la g_j^N,x-x_j\ra = \hat v_N(x_j)+\la g_j^N,x-x_j\ra.$
The inequality \eqref{eq:lb-comparison} is typically strict. Indeed, the first inequality in \eqref{eq:lb-comparison} holds with equality at $x$ only if there is a common index $j^*$ such that $\hat F_i(x) = F_{i,j^*} + \langle g_{i,j^*}, x - x_{j^*}\rangle$
simultaneously for all scenarios $i$ contributing to the value of $\rho$, and the second holds with equality only if $h_{j^*}$ is affine. After the first few iterations, however, different scenarios are typically maximized by different cuts, so that no common index $j^*$ exists and
\eqref{eq:our-lb} is strictly tighter than \eqref{eq:kelley-lb}.
\end{remark}

\begin{remark}[Convex representation of the master problem \eqref{eq:master}]
The master problem \eqref{eq:master} in Algorithm \ref{alg:cutting-alg} 
can be reformulated as linear programming problem (if $\mathcal{X}$ is a polyhedron) or conic programming problem (if $\mathcal{X}$ is conic-representable) for many commonly used risk measure. 
In these cases, the master problem is solvable by standard
off-the-shelf solvers. For instance:
\begin{itemize}[leftmargin=1.5em]
    \item Expectation: $\E[\{\hat{F}_i\}_{i=1}^N]=\tfrac{1}{N}\sum_i \hat{F}_i$. \eqref{eq:master} becomes a linear programming problem.
    \item Conditional Value-at-Risk: $\mathrm{CVaR}_\alpha(\{\hat{F}_i\}_{i=1}^N)= \min_{s\in\R}\{s + \tfrac{1}{(1-\alpha)N}
        \sum_i [\hat F_i-s]_+\}$. \eqref{eq:master} becomes a linear programming problem with an extra scalar variable $s$.
    \item Entropic risk: $\rho(\{\hat{F}_i\}_{i=1}^N)=\tfrac{1}{\eta}\log\bigl(\tfrac{1}{N}\sum_i e^{\eta \hat F_i}\bigr)$. \eqref{eq:master} can be formulated as an
        exponential cone programming problem.
\end{itemize}
\end{remark}

\section{An Application to Portfolio Optimization}
\label{sec:num-exp}

In this section, we specialize the proposed CBR--DRO framework to quadratic-shortfall optimization problem, and further consider its downstream application to portfolio optimization. We conduct numerical experiments of CBR--DRO, and compare its performance with sample-average approximation (SAA) and distributionally robust optimization (DRO), with both synthetic and real return data.

\subsection{Quadratic-shortfall Optimization}

We consider the quadratic-shortfall optimization problem~\citep{ref:fishburn1977mean,ref:chen2011tight}
\begin{equation}\label{eq:num-loss-1}
    \min_{x\in\Delta_p}\;\;\E_{\xi\sim \mathrm{EC}(\mu_0,\Sigma_0,P_0)}\left[\big(\tau - x^\transpose \xi\big)_+^2\right],
\end{equation}
where $\Delta_p \Let \{x\in\R^p_+ : \mathbf{1}^\transpose x = 1\}$ is the $p$-dimensional simplex, $\xi\in\R^p$ is an elliptically distributed random vector, and $\tau\in\R$ is a fixed shortfall threshold. One can view $\xi$ as the random returns of $p$ assets, and $x^\top \xi$ as the portfolio return induced by the portfolio weight $x$. The quadratic-shortfall optimization problem aims to find a portfolio allocation that minimizes the expected squared shortfall of the portfolio return below a prescribed target level $\tau$.
In other words, it penalizes downside deviations from the target return, and ignores outcomes in which the portfolio return exceeds $\tau$. In this subsection, we set $p=20$ and $\tau=1$, and conduct numerical experiments with synthetic data.

\begin{table}[h]
\centering
\renewcommand{\arraystretch}{0.9}
\begin{tabular}{@{}lcccc@{}}
\toprule
Family 
& Gaussian 
& Laplace 
& $t_\nu~\text{for}~\nu\in\{4,5,10\}$ 
& slash$_4$ \\
\midrule
Tail 
& light 
& medium 
& heavier as $\nu$ smaller 
& as heavy as $t_4$ \\
\bottomrule
\end{tabular}
\caption{Distributions used as the data-generating distribution.}
\label{tab:num-dists}
\end{table}

The ground-truth elliptical family is $\xi\sim \mathrm{EC}(\mu_0,\Sigma_0,P_0)$
with location $\mu_0\in\R^p$ and shape $\Sigma_0\in\PD$, both generated randomly from
$\mu_0\sim\mathcal N(0,I_p)$ and $\Sigma_0 = p\cdot AA^\transpose/\tr(AA^\transpose)$
with $A_{ij}\stackrel{\mathrm{i.i.d.}}{\sim}\mathcal N(0,1)$. We consider six instances from elliptical family with different tail heaviness, as summarized in Table~\ref{tab:num-dists}.
For the implementation of \eqref{prob:CBR--DRO}, 
we consider two choices of the outer risk measure $\rho$: posterior $\mathrm{CVaR}_{0.9}$ and posterior expectation, and two choices of the inner ambiguity set: type-2 Wasserstein ball and KL-divergence ball. The corresponding models are denoted by CBR-CVaR-W2, CBR-CVaR-KL, CBR-Exp-W2, and CBR-Exp-KL. Algorithm~\ref{alg:cutting-alg} solves the SAA problem $\min_{x\in\Delta_p}\hat{v}_N(x)$ with $N=500$ posterior draws and $M=500$ unit-sphere samples for the inner oracle. We compare all CBR--DRO variants with the plain sample-average approximation and DRO versions of \eqref{eq:num-loss-1}.

For posterior inference, we use the priors $\mu\sim\mathcal N(0,10I_p)$ and $\Sigma=pS/\tr(S)$, where $S$ follows a Wishart distribution with $p+2$ degrees of freedom and scale matrix $I_p/p$. Samples form posterior distributions are drawn via Markov Chain Monte Carlo.
The hyperparameters in CBR--DRO and DRO are chosen via cross-validation (CV): the radius of the ambiguity set in DRO  is selected from $\{0,0.03,0.1,0.3,1,3\}$, and for CBR--DRO, we jointly select the Gibbs-posterior learning rate and the radius of the inner ambiguity set from $\{1,3,10,30,100\}\times \{0,0.01,0.03,0.1\}$.

For each distribution and each replication $r=1,\ldots,R$, we draw $n\in\{20,50\}$ training samples $\xi_1,\ldots,\xi_n$ and solve the CBR--DRO variants, plain SAA, and DROs. We evaluate the decision $\hat x_r$ obtained in each replication by its expected quadratic-shortfall loss under the true data-generating distribution: $\E_{\xi\sim\mathrm{EC}(\mu_0,\Sigma_0,P_0)}
    \left[\bigl(\tau-\hat x_r^\transpose\xi\bigr)_+^2\right].$
We note that the loss can be computed via a one-dimensional integration, since it depends only on the scalar projection $\hat x_r^\transpose\xi$, whose distribution is known for each elliptical family considered.

Over $R=30$ replications, we compute (a) the average of the losses, and (b) their empirical $\mathrm{CVaR}_{0.9}$. We note that $\mathrm{CVaR}_{0.9}$ takes the average of the worst 10\% of observed losses, and reflects the performance of the models under extreme cases.
Table~\ref{tab:num-res-1} reports the average loss and $\mathrm{CVaR}_{0.9}$ of losses for both sample sizes. Each CBR--DRO entry is followed by a ratio to the better of the baselines:
\[
    \mathrm{ratio}
    =\frac{\mathrm{CBR}}{\min(\mathrm{SAA},\mathrm{DROs})}\times 100\%.
\]
Ratios below $100\%$ indicate an improvement over both SAA and DRO, and the corresponding CBR--DRO entries are bolded. The results show that CBR--DRO has the best average and extreme-case performance in most of the settings. Notably, in data-limited regime $n=20$, the improvement of CBR--DRO in extreme-case performance (Table \ref{tab:num-res-1-cvar-n20}) is substantial (up to 58.9\%).
The improvements in average and extreme-case performance highlight the benefits of incorporating information about the data-generating distribution when hedging against ambiguity: by exploiting the structure of the underlying data-generating distribution and hedging against the parameter uncertainty and distributional uncertainty in a composite model, CBR--DRO provides a robust but less conservative solution, compare to plain SAA and DRO.

\begin{table}[htbp]
\centering
\renewcommand{\arraystretch}{1.08}
\setlength{\tabcolsep}{2.5pt}
\footnotesize
\captionsetup{font=footnotesize}
\captionsetup[subtable]{font=footnotesize,skip=3pt}

\begin{subtable}{\textwidth}
\centering
\begin{tabular}{@{}lrrrrrrr@{}}
\toprule
Distribution & SAA & DRO-W2 & DRO-KL & CBR-CVaR-W2 & CBR-Exp-W2 & CBR-CVaR-KL & CBR-Exp-KL \\
\midrule
Gaussian & $0.1161$ & $0.1130$ & $0.1171$ & $0.1193\,(105.6\%)$ & $\mathbf{0.1058\,(93.6\%)}$ & $0.1192\,(105.4\%)$ & $\mathbf{0.1061\,(93.9\%)}$ \\
Laplace & $0.1276$ & $0.1195$ & $0.1304$ & $\mathbf{0.1103\,(92.3\%)}$ & $\mathbf{0.0958\,(80.2\%)}$ & $\mathbf{0.1110\,(92.9\%)}$ & $\mathbf{0.0972\,(81.3\%)}$ \\
slash$_4$ & $0.2410$ & $0.2272$ & $0.2442$ & $\mathbf{0.2077\,(91.4\%)}$ & $\mathbf{0.1967\,(86.6\%)}$ & $\mathbf{0.2090\,(92.0\%)}$ & $\mathbf{0.1989\,(87.5\%)}$ \\
$t_{10}$ & $0.1496$ & $0.1480$ & $0.1524$ & $\mathbf{0.1402\,(94.7\%)}$ & $\mathbf{0.1343\,(90.8\%)}$ & $\mathbf{0.1410\,(95.3\%)}$ & $\mathbf{0.1360\,(91.9\%)}$ \\
$t_5$ & $0.2012$ & $0.1866$ & $0.2142$ & $\mathbf{0.1771\,(94.9\%)}$ & $\mathbf{0.1615\,(86.5\%)}$ & $\mathbf{0.1751\,(93.9\%)}$ & $\mathbf{0.1624\,(87.0\%)}$ \\
$t_4$ & $0.2519$ & $0.2207$ & $0.2570$ & $\mathbf{0.2176\,(98.6\%)}$ & $\mathbf{0.1963\,(88.9\%)}$ & $\mathbf{0.2172\,(98.4\%)}$ & $\mathbf{0.1977\,(89.6\%)}$ \\
\bottomrule
\end{tabular}
\subcaption{Sample size $n=20$: average loss.}
\label{tab:num-res-1-mean-n20}
\end{subtable}
\par\vspace{0.4em}

\begin{subtable}{\textwidth}
\centering
\begin{tabular}{@{}lrrrrrrr@{}}
\toprule
Distribution & SAA & DRO-W2 & DRO-KL & CBR-CVaR-W2 & CBR-Exp-W2 & CBR-CVaR-KL & CBR-Exp-KL \\
\midrule
Gaussian & $0.1680$ & $0.1657$ & $0.1658$ & $\mathbf{0.1656\,(99.9\%)}$ & $\mathbf{0.1419\,(85.6\%)}$ & $0.1660\,(100.2\%)$ & $\mathbf{0.1410\,(85.1\%)}$ \\
Laplace & $0.2171$ & $0.2142$ & $0.2166$ & $\mathbf{0.1531\,(71.5\%)}$ & $\mathbf{0.1261\,(58.9\%)}$ & $\mathbf{0.1529\,(71.4\%)}$ & $\mathbf{0.1356\,(63.3\%)}$ \\
slash$_4$ & $0.3666$ & $0.3397$ & $0.3656$ & $\mathbf{0.2744\,(80.8\%)}$ & $\mathbf{0.2483\,(73.1\%)}$ & $\mathbf{0.2749\,(80.9\%)}$ & $\mathbf{0.2586\,(76.1\%)}$ \\
$t_{10}$ & $0.2308$ & $0.2198$ & $0.2494$ & $\mathbf{0.2033\,(92.5\%)}$ & $\mathbf{0.1808\,(82.3\%)}$ & $\mathbf{0.2039\,(92.8\%)}$ & $\mathbf{0.1929\,(87.8\%)}$ \\
$t_5$ & $0.2976$ & $0.2642$ & $0.3344$ & $\mathbf{0.2450\,(92.7\%)}$ & $\mathbf{0.1971\,(74.6\%)}$ & $\mathbf{0.2449\,(92.7\%)}$ & $\mathbf{0.1978\,(74.9\%)}$ \\
$t_4$ & $0.3828$ & $0.3304$ & $0.3875$ & $\mathbf{0.3220\,(97.5\%)}$ & $\mathbf{0.2630\,(79.6\%)}$ & $\mathbf{0.3213\,(97.2\%)}$ & $\mathbf{0.2614\,(79.1\%)}$ \\
\bottomrule
\end{tabular}
\subcaption{Sample size $n=20$:$\mathrm{CVaR}_{0.9}$ of losses.}
\label{tab:num-res-1-cvar-n20}
\end{subtable}
\par\vspace{0.4em}

\begin{subtable}{\textwidth}
\centering
\begin{tabular}{@{}lrrrrrrr@{}}
\toprule
Distribution & SAA & DRO-W2 & DRO-KL & CBR-CVaR-W2 & CBR-Exp-W2 & CBR-CVaR-KL & CBR-Exp-KL \\
\midrule
Gaussian & $0.0924$ & $0.0932$ & $0.0945$ & $\mathbf{0.0896\,(97.0\%)}$ & $\mathbf{0.0891\,(96.4\%)}$ & $\mathbf{0.0898\,(97.2\%)}$ & $\mathbf{0.0891\,(96.5\%)}$ \\
Laplace & $0.0922$ & $0.0927$ & $0.0940$ & $\mathbf{0.0876\,(95.1\%)}$ & $\mathbf{0.0856\,(92.9\%)}$ & $\mathbf{0.0880\,(95.4\%)}$ & $\mathbf{0.0862\,(93.5\%)}$ \\
slash$_4$ & $0.1848$ & $0.1819$ & $0.1925$ & $\mathbf{0.1737\,(95.5\%)}$ & $\mathbf{0.1714\,(94.2\%)}$ & $\mathbf{0.1736\,(95.4\%)}$ & $\mathbf{0.1710\,(94.0\%)}$ \\
$t_{10}$ & $0.1222$ & $0.1237$ & $0.1273$ & $\mathbf{0.1109\,(90.8\%)}$ & $\mathbf{0.1101\,(90.1\%)}$ & $\mathbf{0.1119\,(91.5\%)}$ & $\mathbf{0.1104\,(90.3\%)}$ \\
$t_5$ & $0.1544$ & $0.1542$ & $0.1551$ & $\mathbf{0.1450\,(94.1\%)}$ & $\mathbf{0.1428\,(92.6\%)}$ & $\mathbf{0.1450\,(94.0\%)}$ & $\mathbf{0.1418\,(92.0\%)}$ \\
$t_4$ & $0.1812$ & $0.1747$ & $0.1846$ & $\mathbf{0.1698\,(97.2\%)}$ & $\mathbf{0.1651\,(94.5\%)}$ & $\mathbf{0.1699\,(97.3\%)}$ & $\mathbf{0.1653\,(94.6\%)}$ \\
\bottomrule
\end{tabular}
\subcaption{Sample size $n=50$: average loss.}
\label{tab:num-res-1-mean-n50}
\end{subtable}
\par\vspace{0.4em}

\begin{subtable}{\textwidth}
\centering
\begin{tabular}{@{}lrrrrrrr@{}}
\toprule
Distribution & SAA & DRO-W2 & DRO-KL & CBR-CVaR-W2 & CBR-Exp-W2 & CBR-CVaR-KL & CBR-Exp-KL \\
\midrule
Gaussian & $0.1105$ & $0.1174$ & $0.1130$ & $\mathbf{0.1042\,(94.2\%)}$ & $\mathbf{0.1033\,(93.5\%)}$ & $\mathbf{0.1050\,(95.0\%)}$ & $\mathbf{0.1035\,(93.7\%)}$ \\
Laplace & $0.1164$ & $0.1155$ & $0.1201$ & $\mathbf{0.1038\,(89.9\%)}$ & $\mathbf{0.1026\,(88.9\%)}$ & $\mathbf{0.1031\,(89.3\%)}$ & $\mathbf{0.1028\,(89.0\%)}$ \\
slash$_4$ & $0.2582$ & $0.2207$ & $0.2855$ & $\mathbf{0.1941\,(87.9\%)}$ & $\mathbf{0.1990\,(90.2\%)}$ & $\mathbf{0.1919\,(86.9\%)}$ & $\mathbf{0.1987\,(90.0\%)}$ \\
$t_{10}$ & $0.1638$ & $0.1655$ & $0.1955$ & $\mathbf{0.1342\,(81.9\%)}$ & $\mathbf{0.1305\,(79.7\%)}$ & $\mathbf{0.1344\,(82.1\%)}$ & $\mathbf{0.1324\,(80.8\%)}$ \\
$t_5$ & $0.1859$ & $0.1939$ & $0.1843$ & $\mathbf{0.1735\,(94.1\%)}$ & $\mathbf{0.1707\,(92.6\%)}$ & $\mathbf{0.1736\,(94.2\%)}$ & $\mathbf{0.1669\,(90.5\%)}$ \\
$t_4$ & $0.2213$ & $0.2155$ & $0.2216$ & $0.2180\,(101.1\%)$ & $\mathbf{0.1962\,(91.0\%)}$ & $0.2176\,(101.0\%)$ & $\mathbf{0.1956\,(90.8\%)}$ \\
\bottomrule
\end{tabular}
\subcaption{Sample size $n=50$: $\mathrm{CVaR}_{0.9}$ of losses.}
\label{tab:num-res-1-cvar-n50}
\end{subtable}

\caption{Average and extreme-case performance of SAA, DRO, and CBR--DRO.}
\label{tab:num-res-1}
\end{table}

\subsection{Portfolio Optimization with Real Data}
\label{sec:num-real}

In this section, we apply the quadratic-shortfall optimization problem \eqref{eq:num-loss-1} to portfolio optimization with real data. 
We use the ``30 Industry Portfolios'' from the Kenneth R. French Data Library.\footnote{\href{https://mba.tuck.dartmouth.edu/pages/faculty/ken.french/Data_Library/det_30_ind_port.html}{Kenneth R. French Data Library: 30 Industry Portfolios}.} The dataset contains the daily returns of 30 portfolio groups by industry, e.g., agriculture, machinery, or communication. We use data from January 2017 to August 2026 in this experiment. In this experiment, we consider the monthly-level returns, which are constructed by compounding the daily returns. 
We implement the quadratic-shortfall portfolio problem in \eqref{eq:num-loss-1} with $p=30$ and $\tau=0$, 
so that the objective penalizes negative portfolio returns. At each month-end, each method (SAA, DRO, CBR--DRO) uses the preceding $36$ monthly return observations as samples, and determines the allocation for the following month. Portfolios are rebalanced monthly, and we assume 
$1\,\permil $
transaction cost for rebalance. We evaluate the performance of resulting portfolios from January 2020 through August 2026. The hyperparameters of the models are selected via cross-validation using the past 36 month's data. 

Figure~\ref{fig:portfolio-real-returns} displays the cumulative returns curve of each model, and
Table \ref{tab:portfolio-real-performance} shows the annual returns and Sharpe ratios of the portfolios from January 2020 to August 2026. CBR--DRO entries that perform better than all SAA and DRO baselines are bolded.
The results show that all CBR--DRO variants achieve higher final cumulative returns than SAA and DRO baselines, and most of them achieve better annual returns and Sharpe ratios than the baselines during 2020--2026.
Combining the results from the preceding synthetic-data experiments, these advantages suggest that the
robustness--conservatism balance underlying the proposed CBR--DRO framework may help improve both realized returns and risk-adjusted performance, and hence contribute to better overall portfolio performance.

\begin{figure}[htbp]
\centering
\includegraphics[width=0.9\linewidth]{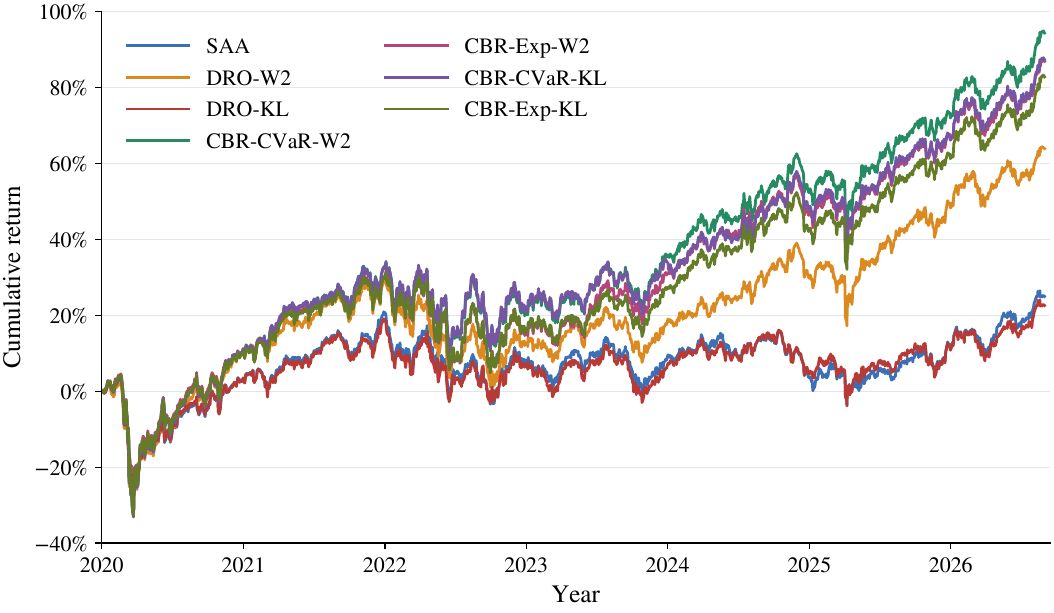}
\caption{Cumulative portfolio returns from January 2020 to August 2026.}
\label{fig:portfolio-real-returns}
\end{figure}

\begin{table}[htbp]
\centering
\scriptsize
\setlength{\tabcolsep}{4pt}
\resizebox{\textwidth}{!}{%
\begin{tabular}{llrrrrrrr}
\toprule
Period & Metric & SAA & DRO-W2 & DRO-KL & \shortstack{CBR-CVaR-\\W2} & \shortstack{CBR-Exp-\\W2} & \shortstack{CBR-CVaR-\\KL} & \shortstack{CBR-Exp-\\KL} \\
\midrule
2020 & Ann. return (\%) & 3.58 & 10.52 & 3.41 & \textbf{11.25} & \textbf{11.20} & \textbf{11.13} & \textbf{10.62} \\
 & Sharpe ratio & 0.24 & 0.49 & 0.23 & \textbf{0.51} & \textbf{0.52} & \textbf{0.51} & \textbf{0.50} \\
\addlinespace[2pt]
2021 & Ann. return (\%) & 16.64 & 16.55 & 14.98 & \textbf{19.22} & \textbf{17.02} & \textbf{19.17} & \textbf{17.38} \\
 & Sharpe ratio & 1.31 & 1.44 & 1.20 & \textbf{1.49} & 1.33 & \textbf{1.47} & 1.34 \\
\addlinespace[2pt]
2022 & Ann. return (\%) & -10.66 & -12.69 & -10.18 & \textbf{-8.71} & -11.14 & \textbf{-7.89} & -10.46 \\
 & Sharpe ratio & -0.68 & -0.75 & -0.72 & \textbf{-0.36} & \textbf{-0.46} & \textbf{-0.32} & \textbf{-0.43} \\
\addlinespace[2pt]
2023 & Ann. return (\%) & 0.26 & 4.50 & -0.69 & \textbf{11.83} & \textbf{12.96} & \textbf{10.01} & \textbf{9.08} \\
 & Sharpe ratio & -0.29 & 0.03 & -0.34 & \textbf{0.52} & \textbf{0.57} & \textbf{0.40} & \textbf{0.35} \\
\addlinespace[2pt]
2024 & Ann. return (\%) & -3.98 & 10.67 & -1.07 & \textbf{12.67} & \textbf{12.24} & 10.38 & \textbf{12.22} \\
 & Sharpe ratio & -0.83 & 0.49 & -0.55 & \textbf{0.63} & \textbf{0.61} & 0.45 & \textbf{0.61} \\
\addlinespace[2pt]
2025 & Ann. return (\%) & 7.02 & 14.01 & 5.61 & 12.72 & 13.31 & 12.29 & 13.31 \\
 & Sharpe ratio & 0.38 & 1.36 & 0.18 & \textbf{1.39} & \textbf{1.56} & 1.30 & \textbf{1.54} \\
\addlinespace[2pt]
2026 (Jan.--Aug.) & Ann. return (\%) & 19.15 & 16.23 & 16.39 & \textbf{20.18} & \textbf{19.32} & \textbf{19.55} & \textbf{20.68} \\
 & Sharpe ratio & 1.35 & 1.13 & 1.14 & \textbf{1.57} & \textbf{1.43} & \textbf{1.45} & \textbf{1.50} \\
\midrule
Overall & Cumulative (\%) & 24.98 & 63.94 & 22.60 & \textbf{94.38} & \textbf{86.88} & \textbf{87.36} & \textbf{82.80} \\
 & Ann. return (\%) & 3.40 & 7.70 & 3.10 & \textbf{10.48} & \textbf{9.83} & \textbf{9.88} & \textbf{9.47} \\
 & Sharpe ratio & 0.11 & 0.39 & 0.09 & \textbf{0.54} & \textbf{0.50} & \textbf{0.50} & \textbf{0.48} \\
\bottomrule
\end{tabular}%
}
\caption{Annual returns and Sharpe ratios of the portfolios from January 2020 to August 2026.}
\label{tab:portfolio-real-performance}
\end{table}

\section{Concluding Remarks}

\label{sec:conclusion}
In this paper, we investigate the integration of Bayesian learning and distributionally robust optimization to address distributional uncertainty associated with elliptical random variables. In contrast to existing work in this area, we adopt a Gibbs posterior framework that does not require specification of the underlying probability density function and instead relies on estimating equations.

Several promising directions remain for future research. First, although our consistency results establish that any fixed learning rate $\eta>0$ yields posterior consistency, this asymptotic guarantee provides limited guidance on the choice of $\eta$ for finite-sample uncertainty quantification. It would therefore be of considerable interest to develop principled methods for selecting an optimal learning rate.
Second, for ease of exposition, we restrict our attention to a single-stage decision-making problem. Many practical applications, however, are inherently multistage. We anticipate that the proposed CBR--DRO framework can be extended to dynamic decision-making settings, similar to those considered in \citet{ref:tacskesen2025optimality,ma2026adaptive}, where the exogenous uncertainty is characterized by an elliptical random vector. In such settings, the location and shape parameters may be allowed to evolve over time to capture distributional shifts \citep{keehan2025don}.
Third, from a statistical perspective, it would be valuable to characterize the contraction rate of the proposed Gibbs posterior as the sample size increases. While our consistency result ensures that the posterior eventually concentrates around the true parameter, a contraction-rate analysis would further quantify the speed of this concentration and determine whether optimal rates can be achieved under suitable regularity conditions~\citep{ref:syring2023gibbs}.
We leave these important questions for future investigation.

\bibliographystyle{plainnat}
\bibliography{ref}

@book{ref:fang2018symmetric,
  title={Symmetric multivariate and related distributions},
  author = {Fang, Kai-Tai and Kotz, Samuel
          and Ng, Kai W.},
  year={2018},
  publisher={Chapman and Hall/CRC}
}

@article{ref:hanasusanto2015distributionally,
  title={Distributionally robust multi-item newsvendor problems with multimodal demand distributions},
  author={Hanasusanto, Grani A and Kuhn, Daniel and Wallace, Stein W and Zymler, Steve},
  journal={Mathematical Programming},
  volume={152},
  number={1},
  pages={1--32},
  year={2015},
  publisher={Springer}
}

@article{ref:jiang2024distributionally,
  title={Distributionally favorable optimization: A framework for data-driven decision-making with endogenous outliers},
  author={Jiang, Nan and Xie, Weijun},
  journal={SIAM Journal on Optimization},
  volume={34},
  number={1},
  pages={419--458},
  year={2024},
  publisher={SIAM}
}

@article{ref:xie2021distributionally,
  title={On distributionally robust chance constrained programs with {Wasserstein} distance},
  author={Xie, Weijun},
  journal={Mathematical Programming},
  volume={186},
  number={1},
  pages={115--155},
  year={2021},
  publisher={Springer}
}

@article{ref:tacskesen2025optimality,
  title={Optimality of linear policies in distributionally robust linear quadratic control},
  author={Ta{\c{s}}kesen, Bahar and Iancu, Dan A and Ko{\c{c}}yi{\u{g}}it, {\c{C}}a{\u{g}}{\i}l and Kuhn, Daniel},
  journal={arXiv preprint arXiv:2508.11858},
  year={2025}
}

@article{ref:wiesemann2014distributionally,
  title={Distributionally robust convex optimization},
  author={Wiesemann, Wolfram and Kuhn, Daniel and Sim, Melvyn},
  journal={Operations Research},
  volume={62},
  number={6},
  pages={1358--1376},
  year={2014},
  publisher={INFORMS}
}

@article{ref:shapiro2023bayesian,
  title={{Bayesian} distributionally robust optimization},
  author={Shapiro, Alexander and Zhou, Enlu and Lin, Yifan},
  journal={SIAM Journal on Optimization},
  volume={33},
  number={2},
  pages={1279--1304},
  year={2023},
  publisher={SIAM}
}

@article{ref:prashanth2022wasserstein,
  title={A {Wasserstein} distance approach for concentration of empirical risk estimates},
  author = {Prashanth, L. A. and Bhat, Sanjay P.},
  journal={Journal of Machine Learning Research},
  volume={23},
  number={238},
  pages={1--61},
  year={2022}
}

@article{ref:pichler2013natural,
  title={The natural Banach space for version independent risk measures},
  author={Pichler, Alois},
  journal={Insurance: Mathematics and Economics},
  volume={53},
  number={2},
  pages={405--415},
  year={2013},
  publisher={Elsevier}
}

@article{ref:hewitt1955symmetric,
  title={Symmetric measures on Cartesian products},
  author={Hewitt, Edwin and Savage, Leonard J},
  journal={Transactions of the American Mathematical Society},
  volume={80},
  number={2},
  pages={470--501},
  year={1955}
}

@article{ref:catoni2007pac,
  title={{PAC}-{Bayesian} supervised classification: the thermodynamics of statistical learning},
  author={Catoni, Olivier},
  journal={arXiv preprint arXiv:0712.0248},
  year={2007}
}

@inproceedings{ref:mcallester1998some,
  title={Some pac-bayesian theorems},
  author={McAllester, David A},
  booktitle={Proceedings of the eleventh annual conference on Computational learning theory},
  pages={230--234},
  year={1998}
}

@article{keehan2025don,
  title={Don't Look Back in Anger: {Wasserstein} Distributionally Robust Optimization with Nonstationary Data},
  author={Keehan, Dominic ST and Anderson, Edward J and Wiesemann, Wolfram},
  journal={arXiv preprint arXiv:2510.18566},
  year={2025}
}

@article{ma2026adaptive,
  title={Adaptive Distributionally Robust Optimal Control with {Bayesian} Ambiguity Sets},
  author={Ma, Wentao and Chen, Zhiping and Xu, Huifu and Zhou, Enlu},
  journal={arXiv preprint arXiv:2604.06936},
  year={2026}
}

@article{xu2018distributionally,
  title={Distributionally robust optimization with matrix moment constraints: Lagrange duality and cutting plane methods},
  author={Xu, Huifu and Liu, Yongchao and Sun, Hailin},
  journal={Mathematical programming},
  volume={169},
  number={2},
  pages={489--529},
  year={2018},
  publisher={Springer}
}

@inproceedings{ref:iyengar2023hedging,
  title={Hedging against Complexity: Distributionally Robust Optimization
         with Parametric Approximation},
  author={Iyengar, Garud and Lam, Henry and Wang, Tianyu},
  booktitle={Proceedings of the 26th International Conference on
             Artificial Intelligence and Statistics},
  volume={206},
  pages={9976--10011},
  year={2023},
  publisher={PMLR}
}

@article{ref:shang2022distributionally,
  title={Distributionally Robust Chance Constraint with
         Unimodality-Skewness Information and Conic Reformulation},
  author={Shang, Chao and Wang, Chao and You, Keyou and Huang, Dexian},
  journal={Operations Research Letters},
  volume={50},
  number={2},
  pages={176--183},
  year={2022},
  doi={10.1016/j.orl.2022.01.016}
}

@article{homem2008rates,
  title={On rates of convergence for stochastic optimization problems under non--independent and identically distributed sampling},
  author={Homem-de-Mello, Tito},
  journal={SIAM Journal on Optimization},
  volume={19},
  number={2},
  pages={524--551},
  year={2008},
  publisher={SIAM}
}

@article{ref:mohajerin2018data,
  title={Data-driven distributionally robust optimization using the {Wasserstein} metric: Performance guarantees and tractable reformulations},
  author={Mohajerin Esfahani, Peyman and Kuhn, Daniel},
  journal={Mathematical Programming},
  volume={171},
  number={1},
  pages={115--166},
  year={2018},
  publisher={Springer}
}

@article{ref:ben2007old,
  title={An old-new concept of convex risk measures: The optimized certainty equivalent},
  author={Ben-Tal, Aharon and Teboulle, Marc},
  journal={Mathematical Finance},
  volume={17},
  number={3},
  pages={449--476},
  year={2007},
  publisher={Wiley Online Library}
}

@article{ref:ben1986expected,
  title={Expected utility, penalty functions, and duality in stochastic nonlinear programming},
  author={Ben-Tal, Aharon and Teboulle, Marc},
  journal={Management Science},
  volume={32},
  number={11},
  pages={1445--1466},
  year={1986},
  publisher={INFORMS}
}

@article{xu2010uniform,
  title={Uniform exponential convergence of sample average random functions under general sampling with applications in stochastic programming},
  author={Xu, Huifu},
  journal={Journal of Mathematical Analysis and Applications},
  volume={368},
  number={2},
  pages={692--710},
  year={2010},
  publisher={Elsevier}
}

@article{ref:delage2010distributionally,
  title={Distributionally robust optimization under moment uncertainty with application to data-driven problems},
  author={Delage, Erick and Ye, Yinyu},
  journal={Operations research},
  volume={58},
  number={3},
  pages={595--612},
  year={2010},
  publisher={INFORMS}
}

@article{ref:kelley1960cutting,
  title={The cutting-plane method for solving convex programs},
  author={Kelley, Jr, James E},
  journal={Journal of the society for Industrial and Applied Mathematics},
  volume={8},
  number={4},
  pages={703--712},
  year={1960},
  publisher={SIAM}
}

@article{ref:ben2013robust,
  title={Robust solutions of optimization problems affected by uncertain probabilities},
  author = {Ben-Tal, Aharon
          and den Hertog, Dick
          and De Waegenaere, Anja M. B.
          and Melenberg, Bertrand
          and Rennen, Gijs},
  journal={Management Science},
  volume={59},
  number={2},
  pages={341--357},
  year={2013},
  publisher={INFORMS}
}

@article{ref:gao2023distributionally,
  title={Distributionally robust stochastic optimization with {Wasserstein} distance},
  author={Gao, Rui and Kleywegt, Anton},
  journal={Mathematics of Operations Research},
  volume={48},
  number={2},
  pages={603--655},
  year={2023},
  publisher={INFORMS}
}

@book{rockafellar1997convex,
  title={Convex analysis},
  author={Rockafellar, R Tyrrell},
  volume={28},
  year={1997},
  publisher={Princeton university press}
}

@article{ref:tyler1987statistical,
  title={Statistical analysis for the angular central Gaussian distribution on the sphere},
  author={Tyler, David E},
  journal={Biometrika},
  volume={74},
  number={3},
  pages={579--589},
  year={1987},
  publisher={Oxford University Press}
}

@article{ref:tyler1987distribution,
  title={A distribution-free M-estimator of multivariate scatter},
  author={Tyler, David E},
  journal={The annals of Statistics},
  pages={234--251},
  year={1987},
  publisher={JSTOR}
}

@incollection{ref:martin2022direct,
  title={Direct {Gibbs} posterior inference on risk minimizers: Construction, concentration, and calibration},
  author={Martin, Ryan and Syring, Nicholas},
  booktitle={Handbook of Statistics},
  volume={47},
  pages={1--41},
  year={2022},
  publisher={Elsevier}
}

@article{ref:bissiri2012bayesian,
  title={On {Bayesian} learning via loss functions},
  author={Bissiri, Pier Giovanni and Walker, Stephen G},
  journal={Journal of statistical planning and inference},
  volume={142},
  number={12},
  pages={3167--3173},
  year={2012},
  publisher={Elsevier}
}

@article{ref:syring2023gibbs,
  title={{Gibbs} posterior concentration rates under sub-exponential type losses},
  author={Syring, Nicholas and Martin, Ryan},
  journal={Bernoulli},
  volume={29},
  number={2},
  pages={1080--1108},
  year={2023},
  publisher={Bernoulli Society for Mathematical Statistics and Probability}
}

@article{ref:bissiri2010bayesian,
  title={On {Bayesian} learning from Bernoulli observations},
  author={Bissiri, Pier Giovanni and Walker, Stephen G},
  journal={Journal of statistical planning and inference},
  volume={140},
  number={11},
  pages={3520--3530},
  year={2010},
  publisher={Elsevier}
}

@article{ref:zhang2006information,
  title={Information-theoretic upper and lower bounds for statistical estimation},
  author={Zhang, Tong},
  journal={IEEE Transactions on Information Theory},
  volume={52},
  number={4},
  pages={1307--1321},
  year={2006},
  publisher={IEEE}
}

@article{rahimian2019distributionally,
  title={Distributionally robust optimization: A review},
  author={Rahimian, Hamed and Mehrotra, Sanjay},
  journal={arXiv preprint arXiv:1908.05659},
  year={2019}
}

@article{ref:capponi2022systemic,
  title={Systemic risk-driven portfolio selection},
  author={Capponi, Agostino and Rubtsov, Alexey},
  journal={Operations Research},
  volume={70},
  number={3},
  pages={1598--1612},
  year={2022},
  publisher={INFORMS}
}

@article{ref:shapiro2013kusuoka,
  title={On {Kusuoka} representation of law invariant risk measures},
  author={Shapiro, Alexander},
  journal={Mathematics of Operations Research},
  volume={38},
  number={1},
  pages={142--152},
  year={2013},
  publisher={INFORMS}
}

@article{ref:anderson2020varying,
  title={Varying confidence levels for {CVaR} risk measures and minimax limits},
  author={Anderson, Edward and Xu, Huifu and Zhang, Dali},
  journal={Mathematical Programming},
  volume={180},
  number={1},
  pages={327--370},
  year={2020},
  publisher={Springer}
}

@article{ref:acerbi2002spectral,
  title={Spectral measures of risk: A coherent representation of subjective risk aversion},
  author={Acerbi, Carlo},
  journal={Journal of Banking \& Finance},
  volume={26},
  number={7},
  pages={1505--1518},
  year={2002},
  publisher={Elsevier}
}

@article{ref:wu2018bayesian,
  title={A {Bayesian} risk approach to data-driven stochastic optimization: Formulations and asymptotics},
  author={Wu, Di and Zhu, Helin and Zhou, Enlu},
  journal={SIAM Journal on Optimization},
  volume={28},
  number={2},
  pages={1588--1612},
  year={2018},
  publisher={SIAM}
}

@article{ref:kuhn2025distributionally,
  title={Distributionally robust optimization},
  author={Kuhn, Daniel and Shafiee, Soroosh and Wiesemann, Wolfram},
  journal={Acta Numerica},
  volume={34},
  pages={579--804},
  year={2025},
  publisher={Cambridge University Press}
}

@article{ref:follmer2002convex,
  title={Convex measures of risk and trading constraints},
  author={F{\"o}llmer, Hans and Schied, Alexander},
  journal={Finance and stochastics},
  volume={6},
  number={4},
  pages={429--447},
  year={2002},
  publisher={Springer}
}

@article{ref:artzner1999coherent,
  title={Coherent measures of risk},
  author={Artzner, Philippe and Delbaen, Freddy and Eber, Jean-Marc and Heath, David},
  journal={Mathematical finance},
  volume={9},
  number={3},
  pages={203--228},
  year={1999},
  publisher={Wiley Online Library}
}

@article{ref:chen2011tight,
  title={Tight bounds for some risk measures, with applications to robust portfolio selection},
  author={Chen, Li and He, Simai and Zhang, Shuzhong},
  journal={Operations Research},
  volume={59},
  number={4},
  pages={847--865},
  year={2011},
  publisher={INFORMS}
}

@article{ref:fishburn1977mean,
  title={Mean-risk analysis with risk associated with below-target returns},
  author={Fishburn, Peter C},
  journal={The American Economic Review},
  volume={67},
  number={2},
  pages={116--126},
  year={1977},
  publisher={JSTOR}
}

@article{ref:qian2019composite,
  title={A composite risk measure framework for decision making under uncertainty},
  author={Qian, Peng-Yu and Wang, Zi-Zhuo and Wen, Zai-Wen},
  journal={Journal of the Operations Research Society of China},
  volume={7},
  number={1},
  pages={43--68},
  year={2019},
  publisher={Springer}
}

@article{ref:bissiri2016general,
  title={A general framework for updating belief distributions},
  author={Bissiri, Pier Giovanni and Holmes, Chris C and Walker, Stephen G},
  journal={Journal of the Royal Statistical Society Series B: Statistical Methodology},
  volume={78},
  number={5},
  pages={1103--1130},
  year={2016},
  publisher={Oxford University Press}
}

@article{ref:KOZUBOWSKI201359,
title = {Multivariate generalized Laplace distribution and related random fields},
journal = {Journal of Multivariate Analysis},
volume = {113},
pages = {59-72},
year = {2013},
note = {Special Issue on Multivariate Distribution Theory in Memory of Samuel Kotz},
issn = {0047-259X},
author = {Tomasz J. Kozubowski and Krzysztof Podgórski and Igor Rychlik},
}

@book{ref:delmas2024elliptically,
  title={Elliptically symmetric distributions in signal processing and machine learning},
  author={Delmas, Jean-Pierre and El Korso, Mohammed Nabil and Fortunati, Stefano and Pascal, Fr{\'e}d{\'e}ric},
  volume={1},
  year={2024},
  publisher={Springer}
}

@article{ref:ollila2012complex,
  title={Complex elliptically symmetric distributions: Survey, new results and applications},
  author={Ollila, Esa and Tyler, David E and Koivunen, Visa and Poor, H Vincent},
  journal={IEEE Transactions on signal processing},
  volume={60},
  number={11},
  pages={5597--5625},
  year={2012},
  publisher={IEEE}
}

@article{ref:bodnar2007distribution,
  title={The distribution of the sample variance of the global minimum variance portfolio in elliptical models},
  author={Bodnar, Taras and Schmid, Wolfgang},
  journal={Statistics},
  volume={41},
  number={1},
  pages={65--75},
  year={2007},
  publisher={Taylor \& Francis}
}

@book{ref:mcneil2015quantitative,
  title={Quantitative risk management: concepts, techniques and tools-revised edition},
  author={McNeil, Alexander J and Frey, R{\"u}diger and Embrechts, Paul},
  year={2015},
  publisher={Princeton university press}
}

@article{ref:kibria2006short,
  title={A short review of multivariate t-distribution},
  author = {Kibria, B. M. Golam
          and Joarder, Anwar H.},
  journal={Journal of Statistical research},
  volume={40},
  number={1},
  pages={59--72},
  year={2006}
}

\clearpage
\appendix
\numberwithin{equation}{section}
\numberwithin{theorem}{section}
\numberwithin{lemma}{section}
\numberwithin{proposition}{section}
\numberwithin{corollary}{section}
\numberwithin{assumption}{section}
\numberwithin{definition}{section}
\numberwithin{example}{section}
\numberwithin{remark}{section}
\numberwithin{figure}{section}
\numberwithin{table}{section}

\section{Lemmas and Proofs}\label{sec:lemmas}

\subsection{Lemmas}

The proofs of the results in this paper rely on the following lemmas.

\begin{lemma}\label{lemma:sup-P-E}
    Let $\xi\in\R^p$ be an elliptically distributed random vector defined as in \eqref{eq:EC-stochastic-rep} and admit a bounded density function $f_\xi$, i.e., $\|f_{\xi}\|_{\infty}\Let\sup_{y\in \R^p} f_\xi(y) < \infty$. Then for any compact set $X\subset \R^p$, it holds that
    \begin{enumerate}[label = (\roman*)]
        \item $\lim\limits_{\delta \to 0} \sup_{x\in X} \prob(\|\xi-x\|\leq \delta)=0$,
        \item for $p \ge 2$, $\sup_{x\in X} \E[1/\|\xi-x\|] < \infty$. 
    \end{enumerate}
\end{lemma}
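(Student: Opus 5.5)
Both parts follow from the bounded density $\|f_\xi\|_\infty<\infty$; compactness of $X$ will not really be needed, since the bounds are uniform over all of $\R^p$.

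For (i), I fix $x\in X$ and write
\[
\prob(\|\xi-x\|\le\delta)=\int_{\{\|y-x\|\le\delta\}} f_\xi(y)\,\diff y\le \|f_\xi\|_\infty\,\omega_p\delta^p,
\]
where $\omega_p$ is the volume of the unit ball in $\R^p$. The right-hand side does not depend on $x$. Taking the supremum over $x\in X$ and letting $\delta\to0$ gives (i).

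For (ii), I will use the layer-cake representation
\[
\E[1/\|\xi-x\|]=\int_0^\infty \prob\big(\|\xi-x\|^{-1}>t\big)\,\diff t=\int_0^\infty \prob\big(\|\xi-x\|<1/t\big)\,\diff t,
\]
and split the integral at $t=1$. On $[0,1]$ the integrand is bounded by $1$, so that piece contributes at most $1$. On $[1,\infty)$ the bound from (i) with $\delta=1/t$ gives an integrand at most $\|f_\xi\|_\infty\omega_p t^{-p}$. This is integrable on $[1,\infty)$ exactly when $p\ge2$, with integral $\|f_\xi\|_\infty\omega_p/(p-1)$. Hence
\[
\sup_{x}\E[1/\|\xi-x\|]\le 1+\frac{\|f_\xi\|_\infty\,\omega_p}{p-1}<\infty .
\]

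An equivalent direct route splits $\E[1/\|\xi-x\|]$ on the events $\{\|\xi-x\|\ge1\}$, which contributes at most $1$, and $\{\|\xi-x\|<1\}$. The latter is bounded by $\|f_\xi\|_\infty\int_{\|z\|<1}\|z\|^{-1}\,\diff z$, and in polar coordinates this integral equals $\|f_\xi\|_\infty\,|\mathbb U^{p-1}|\int_0^1 s^{p-2}\,\diff s$, finite for $p\ge2$.

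There is no substantive obstacle. The only points to check are the measure-theoretic justification of the layer-cake formula, which is Tonelli's theorem for a nonnegative random variable, and the role of $p\ge2$: for $p=1$ the function $1/|z|$ is not locally integrable, so the hypothesis $p\ge2$ is exactly what makes the $t^{-p}$ tail integrable.
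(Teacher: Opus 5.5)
Your proposal is correct and follows the paper's proof. Part (i) uses the same ball-volume bound, and your ``direct route'' for part (ii), which splits on $\{\|\xi-x\|\ge 1\}$ and evaluates $\int_{\|z\|<1}\|z\|^{-1}\,\diff z$ in polar coordinates, is exactly the paper's argument; your layer-cake version is a valid repackaging of the same computation that reuses the bound from (i) and gives the slightly smaller constant $1+\|f_\xi\|_\infty\omega_p/(p-1)$.
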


\begin{proof}
    Part (i). Let $\mathbb{B}(x,\delta)\Let \{y\in\R^p: \|y-x\|\leq \delta\}$. Then for any $x \in X$ and $\delta > 0$, it holds that
    \[
        \prob(\|\xi - x\| \le \delta)
        = \int_{\mathbb{B}(x,\delta)} f_\xi(y)\,\diff y
        \le \|f_\xi\|_\infty \, |\mathbb{B}(x,\delta)|,
    \]
    where $|\mathbb{B}(x,\delta)|$ denotes the volume of $\mathbb{B}(x,\delta)$. Since $|\mathbb{B}(x,\delta)|\to 0$ as $\delta\to 0$, we conclude that $\prob(\|\xi - x\| \le \delta)$ also tends to $0$ as $\delta\to 0$.

    Part (ii). Let $x \in X$ be arbitrary but fixed. We first note that
    \[
        \E\!\left[\frac{1}{\|\xi - x\|}\right]
        = \int_{\R^p} \frac{f_\xi(y)}{\|y-x\|}\,\diff y
        = \int_{\|y-x\|\le 1} \frac{f_\xi(y)}{\|y-x\|}\,\diff y
        + \int_{\|y-x\|>1} \frac{f_\xi(y)}{\|y-x\|}\,\diff y.
    \]
    For the second term,
    \[
        \int_{\|y-x\|>1} \frac{f_\xi(y)}{\|y-x\|}\,\diff y
        \le \int_{\|y-x\|>1} f_\xi(y)\,\diff y \le 1.
    \]
    For the first term, since $f_\xi$ is uniformly bounded over $\R^p$, it follows that
    \[
        \int_{\|y-x\|\le 1} \frac{f_\xi(y)}{\|y-x\|}\,\diff y
        \le \|f_\xi\|_\infty \int_{\|y-x\|\le 1} \frac{\diff y}{\|y-x\|} = \|f_\xi\|_{\infty} \int_{\|z\|\leq 1} \frac{\diff z}{\|z\|}.
    \]
    It remains to show that $\int_{\|z\|\leq 1} \frac{1}{\|z\|} \diff z$ is bounded. By polar coordinates, let $z=r\alpha$, where $r\in[0,1]$ and $\alpha\in \mathbb{U}^{p-1}$. Then
    \[
        \int_{\|z\|\le 1} \frac{\diff z}{\|z\|} = |\mathbb{U}^{p-1}| \int_0^1 r^{p-2}\,\diff r,
    \]
    where $|\mathbb{U}^{p-1}| $ denotes the volume of the unit sphere. Note that $\int_0^1 r^{p-2}\,\diff r$ is finite if and only if $p\geq 2$, which shows that $\E[1/\|\xi-x\|]$ is finite when $p\geq 2$. Since the upper bound is independent of $x$, we conclude that $\sup_{x\in X} \E[1/\|\xi-x\|]$ is also finite. 
\end{proof}

\begin{lemma}\label{lemma:E-phi-1-inf}
    Let $\{x_\ell\}_{\ell=1}^{\infty}$ be any sequence in $\R^p$. If $\|x_\ell\| \to \infty$ as $\ell\to\infty$, then $\|\E[\varphi_1(x_\ell,\xi)]\|\to 1$. 
\end{lemma}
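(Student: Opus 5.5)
The plan is to show that, as $\|x_\ell\|\to\infty$, the unit vector $\varphi_1(x_\ell,\xi)=(\xi-x_\ell)/\|\xi-x_\ell\|$ becomes almost deterministic. Set $e_\ell\Let -x_\ell/\|x_\ell\|$, a unit vector. We will show
\[
\E\bigl[\|\varphi_1(x_\ell,\xi)-e_\ell\|\bigr]\to 0 .
\]
This suffices. By Jensen's inequality and the triangle inequality,
\[
\bigl|\|\E[\varphi_1(x_\ell,\xi)]\|-1\bigr| = \bigl|\|\E[\varphi_1(x_\ell,\xi)]\|-\|e_\ell\|\bigr| \le \|\E[\varphi_1(x_\ell,\xi)]-e_\ell\| \le \E\|\varphi_1(x_\ell,\xi)-e_\ell\|,
\]
so the displayed limit gives $\|\E[\varphi_1(x_\ell,\xi)]\|\to 1$. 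Note that $\varphi_1$ is well defined almost surely, since $\xi$ has a density.

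To bound the expectation, we use a pointwise estimate. For any nonzero vectors $a,b$,
\[
\Bigl\|\frac{a}{\|a\|}-\frac{b}{\|b\|}\Bigr\|\le \frac{2\|a-b\|}{\|b\|}.
\]
Apply this with $a=\xi-x_\ell$ and $b=-x_\ell$, so that $a-b=\xi$. This yields
\[
\|\varphi_1(x_\ell,\xi)-e_\ell\|\le \min\Bigl\{2,\ \frac{2\|\xi\|}{\|x_\ell\|}\Bigr\},
\]
where the constant $2$ comes from both vectors being unit vectors. For each fixed $\omega$, $\|\xi(\omega)\|$ is finite and $\|x_\ell\|\to\infty$, so the right-hand side tends to $0$. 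It is also bounded by the constant $2$. Dominated convergence then gives $\E\|\varphi_1(x_\ell,\xi)-e_\ell\|\to 0$, which completes the argument.

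The only potential obstacle is that $\xi$ may have no moments, so we cannot simply bound $\E\|\xi\|/\|x_\ell\|$. The truncation $\min\{2,\cdot\}$ avoids this entirely: it is what makes dominated convergence available with the integrable dominating function $2$, so no moment assumption or elliptical structure is needed beyond $\xi$ being almost surely finite and $\xi\neq x_\ell$ almost surely. We will also verify the elementary vector inequality. Writing
\[
\frac{a}{\|a\|}-\frac{b}{\|b\|}=\frac{a-b}{\|b\|}+a\Bigl(\frac{1}{\|a\|}-\frac{1}{\|b\|}\Bigr),
\]
the second term has norm $\bigl|\|b\|-\|a\|\bigr|/\|b\|\le\|a-b\|/\|b\|$, and the inequality follows.
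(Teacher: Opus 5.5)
Your proof is correct. It uses the same mechanism as the paper: pointwise alignment of $\varphi_1(x_\ell,\xi)$ with $-x_\ell/\|x_\ell\|$, combined with dominated convergence under a constant bound. The difference is in how the moving direction is handled.

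\begin{itemize}
\item The paper uses compactness of the sphere to extract a subsequence with $x_{\ell_k}/\|x_{\ell_k}\|\to v$. It then shows $\varphi_1(x_{\ell_k},\xi)\to -v$ almost surely and concludes by the subsequence-of-a-subsequence principle.
\item You compare directly with the moving target $e_\ell$, via the explicit estimate $\|\varphi_1(x_\ell,\xi)-e_\ell\|\le\min\{2,\,2\|\xi\|/\|x_\ell\|\}$.
\end{itemize}

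Your route avoids compactness and subsequence extraction altogether. It also gives directly the sharper statement $\E[\varphi_1(x_\ell,\xi)]+x_\ell/\|x_\ell\|\to 0$, not just convergence of the norm. Moreover, it yields a quantitative bound, namely $\E\|\varphi_1(x_\ell,\xi)-e_\ell\|\le 2\prob(\|\xi\|>R)+2R/\|x_\ell\|$ for any $R>0$. The paper's argument is shorter but purely qualitative.
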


\begin{proof}
    Let $v_\ell\Let x_\ell/\|x_\ell\|, \rho_\ell\Let\|x_\ell\|$ and write $x_\ell = \rho_{\ell} v_\ell$. Since the unit sphere is a compact set, one can choose a convergent subsequence of $\{v_\ell\}_{\ell=1}^{\infty}$, say $\{v_{\ell_k}\}_{k=1}^\infty$, and assume that $v_{\ell_k}\to v$. Then it holds that for almost every $\xi$, 
    \[
        \varphi_1(x_{\ell_k},\xi) = \frac{\xi/\rho_{\ell_k}-v_{\ell_k}}{\|\xi/\rho_{\ell_k}-v_{\ell_k}\|} \to -v,
    \]
    which means that 
    \[
        \|\E[\varphi_1(x_{\ell_k},\xi)]\| \to \|-v\| = 1.
    \]
    The above shows that every subsequence of $\E[\varphi_1(x_{\ell},\xi)]$ has a further subsequence converging to $1$, which implies that the whole sequence converges to $1$ and completes the proof.
\end{proof}

\begin{lemma}\label{lemma:g-z}
    Let $g(x)\Let x/\|x\|$ for $0\neq x\in \R^p$. Then for any $x,y\in \R^p$ satisfying $tx+(1-t)y\neq 0,\forall t\in [0,1]$, it holds that 
    \[
        \|g(x)-g(y)\|\leq \sup_{t\in[0,1]} \frac{1}{\|tx+(1-t)y\|} \|x-y\|.
    \]
\end{lemma}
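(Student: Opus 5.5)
The plan is to write $g(x)-g(y)$ as the integral of the derivative of $g$ along the segment from $y$ to $x$, and then to bound that derivative's operator norm.

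Set $z(t)\Let tx+(1-t)y$ for $t\in[0,1]$. By hypothesis $z(t)\neq 0$ on $[0,1]$. The map $z\mapsto z/\|z\|$ is smooth on $\R^p\setminus\{0\}$, so $t\mapsto g(z(t))$ is continuously differentiable on $[0,1]$. The fundamental theorem of calculus then gives
\[
    g(x)-g(y)=\int_0^1 Dg(z(t))\,(x-y)\,\diff t .
\]

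Next compute the Jacobian of $g$ at $z\neq 0$:
\[
    Dg(z)=\frac{1}{\|z\|}\Bigl(I-\frac{zz^\top}{\|z\|^2}\Bigr).
\]
The matrix in parentheses is the orthogonal projection onto $z^\perp$, so its operator norm is at most $1$. Hence $\|Dg(z)\|\le 1/\|z\|$.

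Substituting this into the integral representation and applying the triangle inequality for Bochner/Riemann integrals,
\[
    \|g(x)-g(y)\|\le\int_0^1\frac{\|x-y\|}{\|z(t)\|}\,\diff t\le \sup_{t\in[0,1]}\frac{1}{\|tx+(1-t)y\|}\,\|x-y\|.
\]
Since $\|z(t)\|$ is continuous and positive on the compact interval $[0,1]$, the supremum is finite.

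There is no real obstacle. The only point needing care is the Jacobian formula and its norm bound. If one prefers to avoid matrix calculus, the same bound follows from $\frac{d}{dt}\frac{z}{\|z\|}=\frac{\dot z}{\|z\|}-\frac{z\,\langle z,\dot z\rangle}{\|z\|^3}$. This is the component of $\dot z/\|z\|$ orthogonal to $z$, so its norm is at most $\|\dot z\|/\|z\|$, with $\dot z=x-y$.
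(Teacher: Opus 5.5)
Your proposal is correct and follows essentially the same route as the paper: both compute the Jacobian $Dg(z)=\frac{1}{\|z\|}\bigl(I-\frac{zz^\top}{\|z\|^2}\bigr)$, bound its operator norm by $1/\|z\|$ via the orthogonal-projection structure, and integrate along the segment. The paper simply cites the mean value inequality for vector-valued functions, whereas you derive it directly from the fundamental theorem of calculus.
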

\begin{proof}
    The result is a direct application of the mean value inequality for vector-valued functions. First note that the Jacobian matrix of $g$ is given by
    \[
        Dg(z) = \frac{1}{\|z\|} \left(I-\frac{zz^\top}{\|z\|^2}\right),
    \]
    and it holds that 
    \[
        \|Dg(z)\|_2 = \frac{1}{\|z\|}
    \]
    since $I-\frac{zz^\top}{\|z\|^2}$ is an orthogonal projection matrix and $\|I-\frac{zz^\top}{\|z\|^2}\|=1$. Then by the mean value inequality for vector-valued functions, we have
    \begin{align*}
        \|g(x)-g(y)\|\;\leq\; &\sup_{t\in[0,1]} \|Dg(tx+(1-t)y)\|_2 \|x-y\|\\
        \;=\; &\sup_{t\in[0,1]} \frac{1}{\|tx+(1-t)y\|} \|x-y\|,
    \end{align*}
    which completes the proof.
\end{proof}

\begin{lemma}\label{lemma:phi-1-ub}
    Let $\xi\in \R^p$ be arbitrary but fixed. Then for any $\delta >0$ and $x,y\in\R^p$ such that $\|x-y\|\leq \delta$, it holds that 
    \[
        \|\varphi_1(x,\xi)- \varphi_1(y,\xi)\| \leq 2\mathds{1}_{\{\|\xi-y\|\leq 2\delta\}} + \frac{2\delta}{\|\xi-y\|}\mathds{1}_{\{\|\xi-y\|>2\delta\}}.
    \]
\end{lemma}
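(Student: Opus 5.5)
The plan is to split into the two cases appearing on the right-hand side, according to whether $\|\xi-y\|\le 2\delta$ or $\|\xi-y\|>2\delta$. Here $\varphi_1(x,\xi)=g(\xi-x)$ with $g(z)=z/\|z\|$ as in Lemma~\ref{lemma:g-z}. Throughout I adopt the convention that $\varphi_1$ is a unit vector, or zero at the single point $\xi=x$.

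In the first case, $\|\xi-y\|\le 2\delta$, I would use only the trivial bound. Both $\varphi_1(x,\xi)$ and $\varphi_1(y,\xi)$ have norm at most $1$, so the triangle inequality gives
\[
\|\varphi_1(x,\xi)-\varphi_1(y,\xi)\|\le 2 .
\]
This is exactly the first term on the right-hand side, since the indicator equals $1$ there and the second indicator vanishes.

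In the second case, $\|\xi-y\|>2\delta$, I would apply Lemma~\ref{lemma:g-z} with the two points $\xi-x$ and $\xi-y$. Their difference has norm $\|x-y\|\le\delta$. To use the lemma I must check that the segment between them avoids the origin and bound its distance from $0$. For $t\in[0,1]$,
\[
\|t(\xi-x)+(1-t)(\xi-y)\| = \|(\xi-y)-t(x-y)\| \ge \|\xi-y\|-\delta > \tfrac12\|\xi-y\| > 0,
\]
where the strict inequality $\|\xi-y\|-\delta>\tfrac12\|\xi-y\|$ follows from $\delta<\tfrac12\|\xi-y\|$. Hence the segment avoids the origin, and the supremum in Lemma~\ref{lemma:g-z} is at most $2/\|\xi-y\|$. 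Lemma~\ref{lemma:g-z} then yields
\[
\|\varphi_1(x,\xi)-\varphi_1(y,\xi)\|\le \frac{2}{\|\xi-y\|}\,\|x-y\|\le \frac{2\delta}{\|\xi-y\|}.
\]
This matches the second term, and combining the two cases proves the claim.

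The only delicate step is the lower bound on the segment's distance from the origin in the second case, and it is handled by the elementary inequality above. No step is a real obstacle.
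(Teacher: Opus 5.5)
Your proposal is correct and follows essentially the same route as the paper. The paper likewise uses the trivial bound $2$ in the near case, and in the far case applies Lemma~\ref{lemma:g-z} to $\xi-x$ and $\xi-y$ after showing that the segment between them stays at distance at least $\tfrac12\|\xi-y\|$ from the origin.
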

\begin{proof}
    Recall that $\varphi_1(x,\xi) = (\xi-x)/{\|\xi-x\|}$.
    By definition of $\varphi_1$, it readily holds that $\|\varphi_1(x,\xi) - \varphi_1(y,\xi)\| \leq 2$. Then it suffices to show that if $\|\xi-y\|\geq 2\delta$, then 
    \[
        \|\varphi_1(x,\xi)- \varphi_1(y,\xi)\| \leq \frac{2\delta}{\|\xi-y\|}. 
    \]
    Now let $g(z) = z/\|z\|$ be defined as in Lemma \ref{lemma:g-z}. To use Lemma \ref{lemma:g-z}, we note that $\varphi_1(x,\xi)=g(\xi-x)$. Moreover, if $\|\xi-y\|\geq 2\delta$, then neither $\xi-x$ or $\xi-y$ equals to $0$, and $t(\xi-x)+(1-t)(\xi-y) = \xi -(tx+(1-t)y) \neq 0$ for any $t\in[0,1]$. Let $z_t\Let (tx+(1-t)y)$. Then by Lemma~\ref{lemma:g-z}, we have 
    \begin{align}
        \|\varphi_1(x,\xi)-\varphi_1(y,\xi)\| \;=\; &\|g(\xi-x)-g(\xi-y)\| \;\leq\; \sup_{t\in[0,1]} \frac{1}{\|\xi -z_t\|} \|x-y\|\nonumber\\ 
        \;\leq\; &\frac{2\|x-y\|}{\|\xi-y\|} \leq \frac{2\delta}{\|\xi-y\|}, \label{ineq:varphi-zt}
    \end{align}
    where the second inequality is by the inequality 
    \[
        \|\xi-z_t\| \geq \|\xi-y\| -\|z_t-y\| \geq \|\xi-y\| - \delta \geq \frac{1}{2}\|\xi-y\|,
    \]
    and \eqref{ineq:varphi-zt} completes the proof.
\end{proof}

\begin{lemma}\label{lemma:phi1-ULLN}
    Let $X\subset \R^p$ be a compact set. Then it holds that
    \begin{align}
        \sup_{x\in X}\left\|\frac{1}{n}\sum_{i=1}^n \varphi_1(x,\xi_i)-\E[\varphi_1(x,\xi)]\right\|\xrightarrow{\text{a.s.}}  0. \label{eq:ULLN}
    \end{align}
\end{lemma}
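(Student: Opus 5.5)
We prove the uniform law of large numbers for $\varphi_1$ over the compact set $X$ with a finite $\delta$-net, combined with the local oscillation bound of Lemma~\ref{lemma:phi-1-ub}. The class $\{\varphi_1(x,\cdot):x\in X\}$ is uniformly bounded by $1$, but it is discontinuous at $\xi=x$, so we control its modulus of continuity in expectation instead of pointwise.

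\textbf{Step 1 (oscillation bound).} Fix $\delta>0$ and $y\in X$, and set
\[
\Delta_\delta(y,\xi)\Let 2\mathds{1}_{\{\|\xi-y\|\le 2\delta\}}+\frac{2\delta}{\|\xi-y\|}\mathds{1}_{\{\|\xi-y\|>2\delta\}} .
\]
By Lemma~\ref{lemma:phi-1-ub}, $\sup_{x:\|x-y\|\le\delta}\|\varphi_1(x,\xi)-\varphi_1(y,\xi)\|\le \Delta_\delta(y,\xi)$ for every $\xi$. Next we bound $\E[\Delta_\delta(y,\xi)]$ uniformly in $y\in X$:
\[
\E[\Delta_\delta(y,\xi)]\le 2\prob(\|\xi-y\|\le 2\delta)+\E\Big[\frac{2\delta}{\|\xi-y\|}\mathds{1}_{\{\|\xi-y\|>2\delta\}}\Big].
\]
The first term tends to $0$ uniformly on $X$ by Lemma~\ref{lemma:sup-P-E}(i). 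For the second term there are two options. For $p\ge 2$ it is at most $2\delta\sup_{y\in X}\E[1/\|\xi-y\|]$, which tends to $0$ by Lemma~\ref{lemma:sup-P-E}(ii). Alternatively, and for any $p$, split on the event $\{\|\xi-y\|\le\sqrt\delta\}$: there the integrand is at most $1$ and the probability vanishes uniformly by Lemma~\ref{lemma:sup-P-E}(i); off that event the integrand is at most $2\sqrt\delta$. Either way, $\omega(\delta)\Let\sup_{y\in X}\E[\Delta_\delta(y,\xi)]\to 0$ as $\delta\to 0$.

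\textbf{Step 2 (net and strong LLN).} Cover $X$ by finitely many balls of radius $\delta$ centred at $y_1,\dots,y_K\in X$. For any $x\in X$ pick $y_j$ with $\|x-y_j\|\le\delta$ and use the triangle inequality:
\[
\Big\|\tfrac1n\textstyle\sum_i\varphi_1(x,\xi_i)-\E\varphi_1(x,\xi)\Big\|\le \tfrac1n\textstyle\sum_i\Delta_\delta(y_j,\xi_i)+\Big\|\tfrac1n\textstyle\sum_i\varphi_1(y_j,\xi_i)-\E\varphi_1(y_j,\xi)\Big\|+\E[\Delta_\delta(y_j,\xi)].
\]
There are finitely many $j$, and $\Delta_\delta$ and $\varphi_1$ are bounded by $2$ and $1$ respectively. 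The strong law of large numbers therefore gives, almost surely, $\tfrac1n\sum_i\Delta_\delta(y_j,\xi_i)\to\E\Delta_\delta(y_j,\xi)$ and convergence of the middle term to $0$ for every $j$. Hence $\limsup_n\sup_{x\in X}\|\cdot\|\le 2\omega(\delta)$ almost surely.

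\textbf{Step 3 (conclusion).} Take $\delta=1/m$ for $m\in\mathbb{N}$ and intersect the countably many probability-one events. On this intersection the $\limsup$ is at most $2\omega(1/m)$ for every $m$, so letting $m\to\infty$ gives \eqref{eq:ULLN}.

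The only genuinely delicate point is Step 1, namely the uniform control of the expected oscillation near the singularity $\xi=x$. This is exactly where the bounded-density assumption, through Lemma~\ref{lemma:sup-P-E}, is needed.
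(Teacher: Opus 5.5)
Your proof is correct and follows essentially the same route as the paper's: a finite $\delta$-net, the three-term triangle-inequality split, the strong law on the net points, and Lemma~\ref{lemma:phi-1-ub} with Lemma~\ref{lemma:sup-P-E} to make the expected oscillation vanish as $\delta\to 0$. Your differences are refinements of details the paper leaves implicit. You apply the strong law to the explicit dominating function $\Delta_\delta$ rather than to a supremum, you truncate at $\sqrt{\delta}$ so that you do not need the $p\ge 2$ condition behind Lemma~\ref{lemma:sup-P-E}(ii), and you take the countable intersection over $\delta=1/m$ explicitly.
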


\begin{proof}
    Let $\mathbb{B}(x,r)\Let \{y\in \R^p: \|y-x\|\leq r\}$. Let $x_1,\ldots,x_N$ be the centers of a $\delta$-net of $X$, i.e., $X\subset \bigcup_{j=1}^N \mathbb{B}(x_j,\delta)$. Then for any $x\in X$ and $x_j$ satisfying $\|x-x_j\|\leq \delta$, it holds that
    \begin{align*}
        &\left\|\frac{1}{n}\sum_{i=1}^n \varphi_1(x, \xi_i) - \E[\varphi_1(x,\xi)]\right\|\\ \;\leq\; &\left\|\E[\varphi_1(x,\xi)] - \E[\varphi_1(x_j,\xi)]\right\| + \left\|\frac{1}{n}\sum_{i=1}^n \varphi_1(x, \xi_i) - \frac{1}{n}\sum_{i=1}^n \varphi_1(x_j, \xi_i)\right\| + \left\|\frac{1}{n}\sum_{i=1}^n \varphi_1(x_j, \xi_i) - \E[\varphi_1(x_j,\xi)]\right\|\\
        \;\leq\; &\E[\|\varphi_1(x,\xi)-\varphi_1(x_j,\xi)\|] + \frac{1}{n}\sum_{i=1}^n \left\|\varphi_1(x,\xi_i)-\varphi_1(x_j,\xi_i)\right\| + \left\|\frac{1}{n}\sum_{i=1}^n \varphi_1(x_j, \xi_i) - \E[\varphi_1(x_j,\xi)]\right\|.
    \end{align*}
    Hence, we have 
    \begin{align}
        &\sup_{x\in X}\left\|\frac{1}{n}\sum_{i=1}^n \varphi_1(x,\xi_i)-\E[\varphi_1(x,\xi)]\right\|\nonumber\\
        \;\leq\; & \max_{1\leq j\leq N} \sup_{x:\|x-x_j\|\leq \delta}\E[\|\varphi_1(x,\xi)-\varphi_1(x_j,\xi)\|] + \max_{1\leq j\leq N} \sup_{x:\|x-x_j\|\leq \delta}\frac{1}{n}\sum_{i=1}^n \left\|\varphi_1(x,\xi_i)-\varphi_1(x_j,\xi_i)\right\|\nonumber\\ &+ \max_{1\leq j\leq N} \left\|\frac{1}{n}\sum_{i=1}^n \varphi_1(x_j, \xi_i) - \E[\varphi_1(x_j,\xi)]\right\|. \label{ineq:ULLN-cover}
    \end{align}
    Now let $n\to\infty$ and we analyze the right hand side of \eqref{ineq:ULLN-cover} term by term. For the second term, we first note that for fixed $n$, 
    \[
        \max_{1\leq j\leq N} \sup_{x:\|x-x_j\|\leq \delta}\frac{1}{n}\sum_{i=1}^n \left\|\varphi_1(x,\xi_i)-\varphi_1(x_j,\xi_i)\right\| \leq \max_{1\leq j\leq N} \frac{1}{n}\sum_{i=1}^n \sup_{x:\|x-x_j\|\leq \delta}\left\|\varphi_1(x,\xi_i)-\varphi_1(x_j,\xi_i)\right\|.
    \]
    Since $\|\varphi_1(x,\xi_i)-\varphi_1(x_j,\xi_i)\|$ is uniformly bounded by $2$, by the strong law of large numbers, it follows that
    \[
        \limsup_{n\to\infty}\max_{1\leq j\leq N} \sup_{x:\|x-x_j\|\leq \delta}\frac{1}{n}\sum_{i=1}^n \left\|\varphi_1(x,\xi_i)-\varphi_1(x_j,\xi_i)\right\|\leq \max_{1\leq j\leq N} E\left[\sup_{x:\|x-x_j\|\leq \delta}\|\varphi_1(x,\xi)-\varphi_1(x_j,\xi)\|\right]
    \]
    almost surely.
    Similarly, for the third term, by the strong law of large numbers, it follows that 
    \[
        \max_{1\leq j\leq N} \left\|\frac{1}{n}\sum_{i=1}^n \varphi_1(x_j, \xi_i) - \E[\varphi_1(x_j,\xi)]\right\| \xrightarrow{\text{a.s.}} 0.
    \]
    For the first term, it is independent of $n$ and we note that
    \[
        \sup_{x:\|x-x_j\|\leq \delta}\E[\|\varphi_1(x,\xi)-\varphi_1(x_j,\xi)\|] \leq E\left[\sup_{x:\|x-x_j\|\leq \delta}\|\varphi_1(x,\xi)-\varphi_1(x_j,\xi)\|\right].
    \]
    Combining the above upper bound for the three terms, we have
    \begin{align}
        \limsup_{n\to\infty}\sup_{x\in X}\left\|\frac{1}{n}\sum_{i=1}^n \varphi_1(x,\xi_i)-\E[\varphi_1(x,\xi)]\right\| \leq 2\max_{1\leq j\leq N} \E\left[\sup_{x:\|x-x_j\|\leq \delta}\|\varphi_1(x,\xi)-\varphi_1(x_j,\xi)\|\right], \label{ineq:ULLN-limsup}
    \end{align}
    almost surely. Finally, it suffices to show that the right hand side of \eqref{ineq:ULLN-limsup} converges to $0$ as $\delta\to 0$. To see this, we note that by Lemma \ref{lemma:phi-1-ub}, it holds that
    \begin{align*}
        \E\left[\sup_{x:\|x-x_j\|\leq \delta}\|\varphi_1(x,\xi)-\varphi_1(x_j,\xi)\|\right] \;\leq\; &2 \prob(\|\xi-x_j\|\leq 2\delta) + \E\left[\frac{2\delta}{\|\xi-x_j\|}\mathds{1}_{\{\|\xi-x_j\|>2\delta\}}\right]\\
        \;\leq\; & 2 \prob(\|\xi-x_j\|\leq 2\delta) + \E\left[\frac{2\delta}{\|\xi-x_j\|}\right].
    \end{align*}
    Hence, it follows that
    \begin{align*}
        \text{(RHS of \eqref{ineq:ULLN-limsup})} \;\leq\; &4\max_{1\leq j\leq N} \prob(\|\xi-x_j\|\leq 2\delta) + 4\delta\max_{1\leq j\leq N} \E\left[\frac{1}{\|\xi-x_j\|}\right]\\
        \;\leq\; &4\sup_{x\in X} \prob(\|\xi-x\|\leq 2\delta) + 4\delta \sup_{x\in X} \E\left[\frac{1}{\|\xi-x\|}\right]. 
    \end{align*}
    By Lemma \ref{lemma:sup-P-E}, we conclude that (RHS of \eqref{ineq:ULLN-limsup}) tends to $0$ as $\delta\to 0$ and complete the proof.
\end{proof}

\begin{lemma}\label{lemma:h-ub}
    Let $X\subset \R^p$ and $Y\subset \{\Sigma\in \PD: \tr(\Sigma) = p\}$ be compact sets. Let $\xi$ be arbitrary but fixed. Define 
    \[
        h(\mu,\Sigma,\xi) \Let \frac{(\xi-\mu)(\xi-\mu)^\top}{(\xi-\mu)^\top\Sigma^{-1}(\xi-\mu)}.
    \]
    Then for any $\delta>0$ and $(\mu,\Sigma), (\mu',\Sigma')\in X\times Y$ such that $\|\mu-\mu'\| + \|\Sigma-\Sigma'\| \leq \delta$, it holds that 
    \begin{align}
        \|h(\mu,\Sigma,\xi) - h(\mu',\Sigma',\xi)\|_{\F} \leq 2p\mathds{1}_{\{\|\xi-\mu'\|\leq 2\delta\}} + C\delta + C\frac{\delta}{\|\xi-\mu'\|}\mathds{1}_{\{\|\xi-\mu'\|>2\delta\}}, \label{ineq:oscillation-bd-mu-Sigma}
    \end{align}
    where $C$ is a constant only depending on choice of set $X$ and $Y$. 
\end{lemma}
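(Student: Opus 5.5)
We will prove \eqref{ineq:oscillation-bd-mu-Sigma} by splitting on the size of $\|\xi-\mu'\|$, as in Lemma~\ref{lemma:phi-1-ub}. First record a uniform bound. Since $Y$ is a compact subset of $\PD$, there are constants $0<\lambda\le\Lambda<\infty$ with $\lambda I\preceq\Sigma\preceq\Lambda I$ for all $\Sigma\in Y$. Writing $z=\xi-\mu$, we have $h(\mu,\Sigma,\xi)=zz^\top/(z^\top\Sigma^{-1}z)$, so
\[
\|h\|_{\F}=\frac{\|z\|^2}{z^\top\Sigma^{-1}z}\le \lambda_{\max}(\Sigma)\le \tr(\Sigma)=p .
\]
Hence $\|h(\mu,\Sigma,\xi)-h(\mu',\Sigma',\xi)\|_{\F}\le 2p$ always, which covers the event $\{\|\xi-\mu'\|\le 2\delta\}$.

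On the event $\|\xi-\mu'\|>2\delta$, split with the triangle inequality:
\[
\|h(\mu,\Sigma,\xi)-h(\mu',\Sigma',\xi)\|_{\F}\le \|h(\mu,\Sigma,\xi)-h(\mu',\Sigma,\xi)\|_{\F}+\|h(\mu',\Sigma,\xi)-h(\mu',\Sigma',\xi)\|_{\F}.
\]
\emph{$\Sigma$-part.} With $z=\xi-\mu'$ fixed, $h=zz^\top/(z^\top\Sigma^{-1}z)$ and
\[
\Bigl|\frac{1}{z^\top\Sigma^{-1}z}-\frac{1}{z^\top\Sigma'^{-1}z}\Bigr|
=\frac{|z^\top(\Sigma'^{-1}-\Sigma^{-1})z|}{(z^\top\Sigma^{-1}z)(z^\top\Sigma'^{-1}z)}.
\]
Use $\Sigma^{-1}-\Sigma'^{-1}=\Sigma^{-1}(\Sigma'-\Sigma)\Sigma'^{-1}$, the bound $\|\Sigma^{-1}\|\le 1/\lambda$, and $z^\top\Sigma^{-1}z\ge\|z\|^2/\Lambda$. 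Since $\|zz^\top\|_{\F}=\|z\|^2$, the powers of $\|z\|$ cancel and the term is at most $(\Lambda^2/\lambda^2)\|\Sigma-\Sigma'\|\le C\delta$. This is exactly the middle term $C\delta$, with no dependence on $\xi$.

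\emph{$\mu$-part.} Write $h(\mu,\Sigma,\xi)=G_\Sigma(g(\xi-\mu))$, where $g(z)=z/\|z\|$ is as in Lemma~\ref{lemma:g-z} and $G_\Sigma(s)=ss^\top/(s^\top\Sigma^{-1}s)$ for $s$ on the unit sphere. This uses the scale invariance of $h$ in $z$. On the sphere, $G_\Sigma$ is Lipschitz with a constant depending only on $\lambda,\Lambda$, uniformly over $\Sigma\in Y$: the denominator is at least $1/\Lambda$, and numerator and denominator are smooth with bounded derivatives on the compact sphere. By Lemma~\ref{lemma:phi-1-ub}, on $\{\|\xi-\mu'\|>2\delta\}$ we have $\|g(\xi-\mu)-g(\xi-\mu')\|\le 2\delta/\|\xi-\mu'\|$. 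The $\mu$-part is therefore bounded by $C\delta/\|\xi-\mu'\|$.

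Summing the two cases gives \eqref{ineq:oscillation-bd-mu-Sigma}, with $C$ the maximum of the two constants; it depends only on $Y$ through $\lambda,\Lambda$, and hence on $X,Y$.

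The main obstacle is the uniform Lipschitz estimate for $G_\Sigma$ on the sphere. Reducing $h$ to the normalized vector is what avoids any blow-up as $\|\xi-\mu'\|\to\infty$. For the estimate itself I would compute directly:
\[
\|ss^\top-tt^\top\|_{\F}\le 2\|s-t\|,\qquad |s^\top\Sigma^{-1}s-t^\top\Sigma^{-1}t|\le (2/\lambda)\|s-t\|,
\]
and then combine these via the quotient rule, using that the denominator is at least $1/\Lambda$.
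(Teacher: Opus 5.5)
Your proof is correct. Its skeleton matches the paper's: the crude bound $\|h\|_{\F}\le\lambda_{\max}(\Sigma)\le p$, a triangle split into a $\mu$-part and a $\Sigma$-part, and the $\Sigma$-part estimate through $\Sigma'^{-1}-\Sigma^{-1}=\Sigma'^{-1}(\Sigma-\Sigma')\Sigma^{-1}$ with constant $\Lambda^2/\lambda^2$ (the paper's $p^2/\underline\lambda^2$).

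The genuine difference is in the $\mu$-part.

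\emph{Paper's route.} It whitens, setting $v=\Sigma^{-1/2}(\xi-\mu)$ and $u=\Sigma^{-1/2}(\xi-\mu')$, so that $h=\Sigma^{1/2}\bar v\bar v^\top\Sigma^{1/2}$ with $\bar v=v/\|v\|$. The Lipschitz step then reduces to $\|\bar v\bar v^\top-\bar u\bar u^\top\|_{\F}\le 2\|\bar v-\bar u\|$. However, the paper must re-run the mean-value argument of Lemma~\ref{lemma:g-z} on the whitened segment. It pays the condition number $\sqrt{p/\underline\lambda}$ to return to $\|\xi-z_t\|\ge\frac12\|\xi-\mu'\|$.

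\emph{Your route.} You normalize in the original coordinates, $h=G_\Sigma(g(\xi-\mu))$. This lets you invoke Lemma~\ref{lemma:phi-1-ub} as a black box, since its hypothesis $\|\mu-\mu'\|\le\delta$ holds. The price is a quotient-rule Lipschitz estimate for $G_\Sigma$ on the sphere. Your two displayed inequalities give Lipschitz constant $2\Lambda+2\Lambda^2/\lambda$, of order $\lambda^{-1}$ rather than the paper's $\underline\lambda^{-1/2}$. That difference is immaterial, because the $\Sigma$-part already costs $\Lambda^2/\lambda^2$. In exchange, you avoid a second mean-value argument.

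You also organize the cases slightly differently. You use the crude $2p$ bound for the whole difference on $\{\|\xi-\mu'\|\le 2\delta\}$. The paper applies it only to the $\mu$-part and adds $C_2\delta$ on both events. Either way you obtain \eqref{ineq:oscillation-bd-mu-Sigma}, and yours is marginally sharper on the near event.
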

\begin{proof}
    We first consider some constants that only depend on the choice of $X$ and $Y$ or the whole space. 
    Since $Y$ is a compact subset of set of positive definite matrices, we have
    \[
        \underline{\lambda} \Let \inf_{\Sigma\in Y} \lambda_{\min}(\Sigma) > 0. 
    \]
    Moreover, since $\tr(\Sigma)=p$ for all $\Sigma\in Y$, we have 
    \[
        \sup_{\Sigma\in Y}\lambda_{\max}(\Sigma) \leq p. 
    \]
    Now we note that 
    \[
        \|h(\mu,\Sigma,\xi) - h(\mu',\Sigma',\xi)\|_{\F} \leq \|h(\mu,\Sigma,\xi)-h(\mu',\Sigma,\xi)\|_{\F} + \|h(\mu',\Sigma,\xi)-h(\mu',\Sigma',\xi)\|_{\F}. 
    \]
    In the following, we show that 
    \begin{align}
        \label{ineq:oscillation-bd-mu-Sigma-1}
        \|h(\mu,\Sigma,\xi)-h(\mu',\Sigma,\xi)\|_{\F} \leq 2p\mathds{1}_{\{\|\xi-\mu'\|\leq 2\delta\}} + C_1 \frac{\delta}{\|\xi-\mu'\|} \mathds{1}_{\{\|\xi-\mu'\|\geq 2\delta\}}
    \end{align}
    and 
    \begin{align}
        \label{ineq:oscillation-bd-mu-Sigma-2}
        \|h(\mu',\Sigma,\xi)-h(\mu',\Sigma',\xi)\|_{\F} \leq C_2\delta,
    \end{align}
    where $C_1$ and $C_2$ are constants only depending on $\underline{\lambda}$ and $p$. Then combining the above three inequalities, we show that \eqref{ineq:oscillation-bd-mu-Sigma} holds.

    Now we show that \eqref{ineq:oscillation-bd-mu-Sigma-1} holds. 
    First note that a crude bound gives that 
    \[
        \|h(\mu,\Sigma,\xi)\|_{\F} = \frac{\|\xi-\mu\|_2^2}{(\xi-\mu)^\top\Sigma^{-1}(\xi-\mu)} \leq \lambda_{\max}(\Sigma) \leq p,
    \]
    where the equality is by the fact that $\|xx^\top\|_{\text{F}}=\|x\|_2^2$, and the first inequality is by $\|x\|_2^2 \leq \lambda_{\max}(\Sigma) x^\top\Sigma^{-1}x$. Then it holds that
    \[
        \|h(\mu,\Sigma,\xi)-h(\mu',\Sigma,\xi)\|_{\F} \leq 2p.
    \]
    Then to show \eqref{ineq:oscillation-bd-mu-Sigma-1}, it suffices to show that if $\|\xi-\mu'\|\geq 2\delta$, then
    \[
        \|h(\mu,\Sigma,\xi)-h(\mu',\Sigma,\xi)\|_{\F} \leq  C_1 \frac{\delta}{\|\xi-\mu'\|}.
    \]
    Now assume that $\|\xi-\mu'\|\geq 2\delta$ and let $v\Let \Sigma^{-1/2}(\xi-\mu)$ and $u\Let \Sigma^{-1/2}(\xi-\mu')$. Then we have
    \begin{align}
        \|h(\mu,\Sigma,\xi) - h(\mu',\Sigma,\xi)\|_{\F}  \;=\; &\left\|\Sigma^{1/2} \frac{vv^\top}{\|v\|_2^2}\Sigma^{1/2}-\Sigma^{1/2} \frac{uu^\top}{\|u\|_2^2}\Sigma^{1/2}\right\|_{\F}\nonumber\\
        \;\leq\; &\lambda_{\max}(\Sigma)\left\| \frac{vv^\top}{\|v\|_2^2}- \frac{uu^\top}{\|u\|_2^2}\right\|_{\F}\nonumber\\
        \;=\; &\lambda_{\max}(\Sigma) \left\|\overline{v}(\overline{v}-\overline{u})^\top+(\overline{v}-\overline{u})\overline{u}^\top\right\|_{\F}
        \quad (\overline{v}\Let v/\|v\|,\overline{u}\Let u/\|u\|)
        \nonumber\\
        \;\leq\; &p \left(\|\overline{v}\|\|\overline{v}-\overline{u}\| + \|\overline{u}\|\|\overline{v}-\overline{u}\|\right)\nonumber\\
        \;=\; &2p\left\|\frac{v}{\|v\|}-\frac{u}{\|u\|}\right\|.\label{ineq:2p-v-u}
    \end{align}
    Since $\|\mu-\mu'\|\leq \delta$ and $\|\xi-\mu'\|\geq 2\delta$, we note that $\xi$ does not fall on the line segment that connects $\mu$ and $\mu'$, and hence, $tv+(1-t)u\neq 0$ for all $t\in [0,1]$. Then by Lemma \ref{lemma:g-z}, it holds that 
    \begin{align*}
        (\text{RHS of}~\eqref{ineq:2p-v-u}) \;\leq\; &2p\sup_{t\in [0,1]} \frac{1}{\|tu+(1-t)v\|}\|u-v\|\\
        \;\leq\; &2p\sup_{t\in [0,1]} \frac{1}{\lambda_{\min} (\Sigma^{-1/2})\|t(\xi-\mu)+(1-t)(\xi-\mu')\|} \lambda_{\max} (\Sigma^{-1/2})\|\mu-\mu'\|\\
       \;=\; &2p \frac{\lambda_{\max}(\Sigma^{-1/2})}{\lambda_{\min}(\Sigma^{-1/2})} \sup_{t\in [0,1]} \frac{1}{\|\xi-z_t\|} \|\mu-\mu'\| \quad  (\text{Let}~z_t\Let t\mu+(1-t)\mu')\\
        \;\leq\; &4p\sqrt{\frac{p}{\underline{\lambda}}}\frac{\|\mu-\mu'\|}{\|\xi-\mu'\|}\\
        \;\leq\; &4p\sqrt{\frac{p}{\underline{\lambda}}}\frac{\delta}{\|\xi-\mu'\|},
    \end{align*}
    where the third inequality is by the inequality
        \[
        \|\xi-z_t\| \geq \|\xi-\mu'\| -\|z_t-\mu'\| \geq \|\xi-\mu'\| - \delta \geq \frac{1}{2}\|\xi-\,\mu'\|.
    \]
    The above completes the proof of \eqref{ineq:oscillation-bd-mu-Sigma-1} with $C_1 = 4p\sqrt{\frac{p}{\underline{\lambda}}}$.

    Now we show that \eqref{ineq:oscillation-bd-mu-Sigma-2} holds. We have
    \begin{align*}
        \|h(\mu',\Sigma,\xi)-h(\mu',\Sigma',\xi)\|_{\F} \;=\; &\left\|\frac{(\xi-\mu')(\xi-\mu')^\top}{(\xi-\mu')^\top\Sigma^{-1}(\xi-\mu')}-\frac{(\xi-\mu')(\xi-\mu')^\top}{(\xi-\mu')^\top\Sigma'^{-1}(\xi-\mu')}\right\|_{\F}\\
        \;\leq\; &\|\xi-\mu'\|_2^2 \left|\frac{1}{(\xi-\mu')^\top\Sigma^{-1}(\xi-\mu')}-\frac{1}{(\xi-\mu')^\top\Sigma'^{-1}(\xi-\mu')}\right|\\
        \;=\; &\|\xi-\mu'\|_2^2\frac{\left|(\xi-\mu')^\top\Sigma'^{-1}(\xi-\mu')-(\xi-\mu')^\top\Sigma^{-1}(\xi-\mu')\right|}{(\xi-\mu')^\top\Sigma^{-1}(\xi-\mu')(\xi-\mu')^\top\Sigma'^{-1}(\xi-\mu')}\\
        \;\leq\; &\frac{\left|(\xi-\mu')^\top(\Sigma'^{-1}-\Sigma^{-1})(\xi-\mu')\right|}{\|\xi-\mu'\|_2^2/p^2}\\
        \;\leq\; &p^2\|\Sigma'^{-1}-\Sigma^{-1}\|_{\F},
    \end{align*}
    where the second inequality is by $(\xi-\mu')^\top\Sigma^{-1}(\xi-\mu') \geq \|\xi-\mu'\|_2^2\lambda_{\min}(\Sigma^{-1})=\|\xi-\mu'\|_2^2/\lambda_{\max}(\Sigma)\geq \|\xi-\mu'\|_2^2/p$, and the third inequality is by $\left|(\xi-\mu')^\top(\Sigma'^{-1}-\Sigma^{-1})(\xi-\mu')\right|\leq \|\xi-\mu'\|_2^2 \|\Sigma'^{-1}-\Sigma^{-1}\|_{\text{F}}$. Finally, we note that 
    \[
        \|\Sigma'^{-1}-\Sigma^{-1}\|_{\text{F}} = \|\Sigma'^{-1}(\Sigma-\Sigma')\Sigma^{-1}\|_{\F} \leq \lambda_{\max}(\Sigma^{-1}) \lambda_{\max}(\Sigma'^{-1}) \|\Sigma-\Sigma'\|_{\F} \leq \delta/\underline{\lambda}^2.
    \]
    Hence, we conclude that
    \[
        \|h(\mu',\Sigma,\xi)-h(\mu',\Sigma',\xi)\|_{\F} \leq p^2 \delta/\underline{\lambda}^2
    \]
    and completes the proof of \eqref{ineq:oscillation-bd-mu-Sigma-2} with $C_2=p^2/\underline{\lambda}^2$.
\end{proof}

\begin{lemma}\label{lemma:h-ULLN}
    Let $X\subset \R^p$ and $Y\subset \{\Sigma\in \PD: \tr(\Sigma) = p\}$ be compact sets. Then 
    \begin{align}
        \sup_{(\mu,\Sigma)\in X\times Y} \left\|\frac{1}{n}\sum_{i=1}^n\varphi_2(\mu,\Sigma,\xi_i)-\E[\varphi_2(\mu,\Sigma,\xi)]\right\|_{\F} \asto 0, \; \text{as} \; n\to\infty
    \end{align}
\end{lemma}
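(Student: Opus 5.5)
The proof will mirror that of Lemma~\ref{lemma:phi1-ULLN}, with Lemma~\ref{lemma:h-ub} playing the role of Lemma~\ref{lemma:phi-1-ub}. Since $\varphi_2(\mu,\Sigma,\xi)=\Sigma/p-h(\mu,\Sigma,\xi)$, the deterministic term $\Sigma/p$ cancels in $\frac1n\sum_i\varphi_2-\E[\varphi_2]$. It therefore suffices to prove the uniform strong law for $h$ over $X\times Y$. Note that $\|h\|_{\F}\le p$ on $X\times Y$, so all expectations are finite.

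\textbf{Step 1: finite net and three-term split.} Fix $\delta>0$. By compactness of $X\times Y$ (with the norm $\|\mu\|+\|\Sigma\|_{\F}$), choose a finite $\delta$-net $\{(\mu_j,\Sigma_j)\}_{j=1}^N\subset X\times Y$. For $(\mu,\Sigma)$ within $\delta$ of $(\mu_j,\Sigma_j)$, I will apply the triangle inequality exactly as in \eqref{ineq:ULLN-cover}. This bounds the supremum by three pieces:
\begin{itemize}
\item the population oscillation $\E\bigl[\sup\|h(\mu,\Sigma,\xi)-h(\mu_j,\Sigma_j,\xi)\|_{\F}\bigr]$;
\item its empirical analogue $\frac1n\sum_i\sup\|h(\mu,\Sigma,\xi_i)-h(\mu_j,\Sigma_j,\xi_i)\|_{\F}$;
\item the finitely many pointwise deviations at the net points.
\end{itemize}
The oscillation $\sup_{(\mu,\Sigma)\in\text{ball}}\|h(\mu,\Sigma,\xi)-h(\mu_j,\Sigma_j,\xi)\|_{\F}$ is bounded by $2p$. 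Its measurability can be handled by restricting the supremum to a countable dense subset, using continuity of $h$ off the null set $\{\xi=\mu\}$. The strong law of large numbers then gives, almost surely,
\[
\limsup_{n\to\infty}\sup_{X\times Y}\Bigl\|\tfrac1n\textstyle\sum_i h(\mu,\Sigma,\xi_i)-\E[h(\mu,\Sigma,\xi)]\Bigr\|_{\F}\le 2\max_j\E\Bigl[\sup_{\|(\mu,\Sigma)-(\mu_j,\Sigma_j)\|\le\delta}\|h(\mu,\Sigma,\xi)-h(\mu_j,\Sigma_j,\xi)\|_{\F}\Bigr].
\]

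\textbf{Step 2: bounding the oscillation.} Apply Lemma~\ref{lemma:h-ub} with $(\mu',\Sigma')=(\mu_j,\Sigma_j)$. The bound is uniform over the ball and yields
\[
\E[\sup\cdots]\le 2p\,\prob(\|\xi-\mu_j\|\le2\delta)+C\delta+C\delta\,\E\bigl[1/\|\xi-\mu_j\|\bigr],
\]
with $C$ depending only on $X,Y$. Taking the maximum over $j$ and then the supremum over $\mu_j\in X$, Lemma~\ref{lemma:sup-P-E}(i) and (ii) (valid since $p\ge2$ and $\xi$ has bounded density) show that this tends to $0$ as $\delta\to0$. Since the almost-sure limsup is bounded by an arbitrarily small deterministic quantity, I intersect the corresponding events over a countable sequence $\delta_k\downarrow0$. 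This shows the limsup is $0$ almost surely, which completes the proof.

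\textbf{Main obstacle.} The only delicate point is making sure the bound from Lemma~\ref{lemma:h-ub} is uniform over the whole $\delta$-ball around the net point. This holds because the right-hand side of \eqref{ineq:oscillation-bd-mu-Sigma} depends only on $\xi$, $\mu_j$ and $\delta$. A secondary point is that the net should be taken inside $X\times Y$, so that the constants $\underline\lambda$ and $\tr=p$ from Lemma~\ref{lemma:h-ub} apply. Everything else is routine.
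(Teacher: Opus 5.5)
Your proposal is correct and follows essentially the same route as the paper. You reduce to $h$ because $\Sigma/p$ cancels, take a finite $\delta$-net of $X\times Y$, split the error into the population oscillation, the empirical oscillation and the net-point deviations, and control the oscillation through Lemma~\ref{lemma:h-ub} and Lemma~\ref{lemma:sup-P-E}. Your remarks on the measurability of the supremum and on intersecting the almost-sure events over a countable sequence $\delta_k\downarrow 0$ cover details the paper leaves implicit, and you handle them correctly.
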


\begin{proof}
    Let $h(\mu,\Sigma,\xi) \Let \frac{(\xi-\mu)(\xi-\mu)^\top}{(\xi-\mu)^\top\Sigma^{-1}(\xi-\mu)}$. 
    It suffices to show that 
    \begin{align}
        \sup_{(\mu,\Sigma)\in X\times Y} \left\|\frac{1}{n}\sum_{i=1}^n h(\mu,\Sigma,\xi_i)-\E[h(\mu,\Sigma,\xi)]\right\|_{\F} \asto 0 \; \text{as} \; n\to\infty.
    \end{align}
    The proof follows a similar idea 
    to that in the proof of Lemma \ref{lemma:phi1-ULLN}.
    Let $\mathbb{B}((\mu,\Sigma), r)\Let \{(\mu',\Sigma')\in X\times Y: \|\mu-\mu'\| + \|\Sigma-\Sigma'\|_{\F} \leq r\}$. Let $(\mu_1,\Sigma_1), \ldots, (\mu_N,\Sigma_N)$ be centers of a $\delta$-net of $X\times Y$, i.e., $X\times Y\subset \bigcup_{j=1}^N \mathbb{B}((\mu_j,\Sigma_j),\delta)$. Then for any $(\mu,\Sigma)\in X\times Y$ and $(\mu_j,\Sigma_j)$ satisfying $\|\mu-\mu_j\|+\|\Sigma-\Sigma_j\|_{\F}\leq \delta$, it holds that
    \begin{align*}
        &\left\|\frac{1}{n}\sum_{i=1}^n h(\mu,\Sigma,\xi_i)-\E[h(\mu,\Sigma,\xi)]\right\|_{\F}\\
        \leq\; &\|\E[h(\mu,\Sigma,\xi)]-\E[h(\mu_j,\Sigma_j,\xi)]\|_{\F} + \left\|\frac{1}{n}\sum_{i=1}^n h(\mu,\Sigma,\xi_i)-\frac{1}{n}\sum_{i=1}^n h(\mu_j,\Sigma_j,\xi_i)\right\|_{\F} \\
        &+ \left\|\frac{1}{n}\sum_{i=1}^n h(\mu_j,\Sigma_j,\xi_i)-\E[h(\mu_j,\Sigma_j,\xi)]\right\|_{\F}\\
        \leq\; &\E[\|h(\mu,\Sigma,\xi)-h(\mu_j,\Sigma_j,\xi)\|_{\F}] + \left\|\frac{1}{n}\sum_{i=1}^n h(\mu,\Sigma,\xi_i)-\frac{1}{n}\sum_{i=1}^n h(\mu_j,\Sigma_j,\xi_i)\right\|_{\F} \\
        &+ \left\|\frac{1}{n}\sum_{i=1}^n h(\mu_j,\Sigma_j,\xi_i)-\E[h(\mu_j,\Sigma_j,\xi)]\right\|_{\F}.
    \end{align*}
    Hence, we have
    \begin{align}
        &\sup_{(\mu,\Sigma)\in X\times Y} \left\|\frac{1}{n}\sum_{i=1}^n h(\mu,\Sigma,\xi_i)-\E[h(\mu,\Sigma,\xi)]\right\|_{\F}\nonumber\\
        \leq\; &\max_{1\leq j\leq N} \sup_{\|\mu-\mu_j\|+\|\Sigma-\Sigma_j\|_{\F}\leq \delta} \E[\|h(\mu,\Sigma,\xi)-h(\mu_j,\Sigma_j,\xi)\|_{\F}] \nonumber\\
        &+ \max_{1\leq j\leq N} \sup_{\|\mu-\mu_j\|+\|\Sigma-\Sigma_j\|_{\F}\leq \delta} \frac{1}{n}\sum_{i=1}^n \|h(\mu,\Sigma,\xi_i)-h(\mu_j,\Sigma_j,\xi_i)\|_{\F}\nonumber\\
        &+\max_{1\leq j\leq N} \left\|\frac{1}{n}\sum_{i=1}^n h(\mu_j,\Sigma_j,\xi_i)-\E[h(\mu_j,\Sigma_j,\xi)]\right\|_{\F}. \label{ineq:h-ULLN-cover}
    \end{align}
    Now let $n\to \infty$ and we analyze the right hand side of \eqref{ineq:h-ULLN-cover} term by term. For the second term, we first note that for fixed $n$, 
    \begin{align*}
        &\max_{1\leq j\leq N} \sup_{\|\mu-\mu_j\|+\|\Sigma-\Sigma_j\|_{\F}\leq \delta} \frac{1}{n}\sum_{i=1}^n \|h(\mu,\Sigma,\xi_i)-h(\mu_j,\Sigma_j,\xi_i)\|_{\F} \\
        \leq \; &\max_{1\leq j\leq N}  \frac{1}{n}\sum_{i=1}^n \sup_{\|\mu-\mu_j\|+\|\Sigma-\Sigma_j\|_{\F}\leq \delta} \|h(\mu,\Sigma,\xi_i)-h(\mu_j,\Sigma_j,\xi_i)\|_{\F}.
    \end{align*}
    Since $\|h(\mu,\Sigma,\xi)\|_{\F} \leq \lambda_{\max}(\Sigma) \leq \tr(\Sigma) = p$, by the strong law of large numbers, it follows that
    \begin{align*}
        &\limsup_{n\to\infty}\max_{1\leq j\leq N} \sup_{\|\mu-\mu_j\|+\|\Sigma-\Sigma_j\|_{\F}\leq \delta} \frac{1}{n}\sum_{i=1}^n \|h(\mu,\Sigma,\xi_i)-h(\mu_j,\Sigma_j,\xi_i)\|_{\F}\\  \leq &\max_{1\leq j\leq N} \E\left[\sup_{\|\mu-\mu_j\|+\|\Sigma-\Sigma_j\|_{\F}\leq \delta} \|h(\mu,\Sigma,\xi_i)-h(\mu_j,\Sigma_j,\xi_i)\|_{\F}\right]~\text{almost surely.}
    \end{align*}
    Similarly, for the third term, by the strong law of large numbers, 
    \[
        \max_{1\leq j\leq N} \left\|\frac{1}{n}\sum_{i=1}^n h(\mu_j,\Sigma_j,\xi_i)-\E[h(\mu_j,\Sigma_j,\xi)]\right\|_{\F} \asto 0.
    \]
    For the first term, it is independent of $n$ and we note that
    \[
        \sup_{\|\mu-\mu_j\|+\|\Sigma-\Sigma_j\|_{\F}\leq \delta} \E[\|h(\mu,\Sigma,\xi)-h(\mu_j,\Sigma_j,\xi)\|_{\F}] \leq  \E\left[\sup_{\|\mu-\mu_j\|+\|\Sigma-\Sigma_j\|_{\F}\leq \delta} \|h(\mu,\Sigma,\xi)-h(\mu_j,\Sigma_j,\xi)\|_{\F}\right].
    \]
    Combining the above upper bound for the three terms, we have 
    \begin{align}
        &\limsup_{n\to\infty} \sup_{(\mu,\Sigma)\in X\times Y}\left\|\frac{1}{n}\sum_{i=1}^n h(\mu,\Sigma,\xi_i)-\E[h(\mu,\Sigma,\xi)]\right\|_{\F} \nonumber\\
        \leq &2 \max_{1\leq j\leq N} \E\left[\sup_{\|\mu-\mu_j\|+\|\Sigma-\Sigma_j\|_{\F}\leq \delta} \|h(\mu,\Sigma,\xi_i)-h(\mu_j,\Sigma_j,\xi_i)\|_{\F}\right],\label{ineq:h-ULLN-limsup}
    \end{align}
    almost surely. Finally, it suffices to show that the right hand side of \eqref{ineq:h-ULLN-limsup} converges to $0$ as $\delta\to 0$. To see this, we note that by Lemma \ref{lemma:h-ub}, it holds that 
    \begin{align*}
        \E\left[\sup_{\|\mu-\mu_j\|+\|\Sigma-\Sigma_j\|_{\F}\leq \delta} \|h(\mu,\Sigma,\xi_i)-h(\mu_j,\Sigma_j,\xi_i)\|_{\F}\right] \leq 2p\prob(\|\xi-\mu_j\| \leq 2\delta)  + C\delta + C\delta\E\left[\frac{1}{\|\xi-\mu_j\|}\right].
    \end{align*}
    Hence, it follows that
    \begin{align*}
        \text{(RHS of \eqref{ineq:h-ULLN-limsup})} \leq &4p\max_{1\leq j\leq N}\prob(\|\xi-\mu_j\| \leq 2\delta)  + 2C\delta + 2C\delta\max_{1\leq j\leq N}\E\left[\frac{1}{\|\xi-\mu_j\|}\right]\\
        \leq &4p\sup_{\mu\in X}\prob(\|\xi-\mu\| \leq 2\delta)  + 2C\delta + 2C\delta\sup_{\mu\in X}\E\left[\frac{1}{\|\xi-\mu\|}\right].
    \end{align*}
    By Lemma \ref{lemma:sup-P-E}, we conclude that the above tends to $0$ as $\delta\to 0$ and completes the proof.
\end{proof}

\begin{lemma}\label{lemma:ULLN}
    Let $X\subset \R^p$ and $Y\subset \{\Sigma\in \PD: \tr(\Sigma) = p\}$ be compact sets. Then it holds that 
    \[
        \sup_{(\mu,\Sigma)\in X\times Y} \left\|\Phi_n(\mu,\Sigma)-\Phi(\mu,\Sigma)\right\| \asto 0.
    \]
\end{lemma}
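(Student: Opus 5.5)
The plan is to reduce the joint uniform convergence to the two componentwise results already established, Lemma~\ref{lemma:phi1-ULLN} and Lemma~\ref{lemma:h-ULLN}, using the product-norm structure on $E=\R^p\times\S^p$. With $\Phi_n(\mu,\Sigma)=\frac1n\sum_{i=1}^n\varphi(\mu,\Sigma,\xi_i)$ and the norm $\|(\mu,\Sigma)\|=\|\mu\|_2+\|\Sigma\|_{\F}$, we have for every $(\mu,\Sigma)\in X\times Y$
\[
\|\Phi_n(\mu,\Sigma)-\Phi(\mu,\Sigma)\|
=\Bigl\|\tfrac1n\textstyle\sum_{i=1}^n\varphi_1(\mu,\xi_i)-\E[\varphi_1(\mu,\xi)]\Bigr\|_2
+\Bigl\|\tfrac1n\textstyle\sum_{i=1}^n\varphi_2(\mu,\Sigma,\xi_i)-\E[\varphi_2(\mu,\Sigma,\xi)]\Bigr\|_{\F}.
\]
Taking the supremum over $X\times Y$ bounds the left side by the sum of the two separate suprema.

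The first supremum depends only on $\mu$, so it is at most $\sup_{\mu\in X}\|\cdots\|$. This tends to zero almost surely by Lemma~\ref{lemma:phi1-ULLN}, applied with the compact set $X$.

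For the second supremum, note that $\varphi_2(\mu,\Sigma,\xi)=\Sigma/p-h(\mu,\Sigma,\xi)$. The deterministic term $\Sigma/p$ cancels between the empirical average and the expectation. Strictly, $\varphi_2$ in \eqref{eq:phi_2-EDRO-Renjie} is written with $\mu_0$, but in \eqref{eq:phi-gibbs} it is evaluated at the generic $\mu$, and we use that version. What remains is exactly $\sup_{X\times Y}\|\frac1n\sum_i h(\mu,\Sigma,\xi_i)-\E[h(\mu,\Sigma,\xi)]\|_{\F}$, which tends to zero almost surely by Lemma~\ref{lemma:h-ULLN}.

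Finally, the union of two null exceptional sets is null, so both terms vanish simultaneously almost surely, and the claim follows. No genuine obstacle is expected here, since the analytic work is contained in the earlier lemmas. The only points to verify are the cancellation of $\Sigma/p$ and that the expectations are finite, which holds because $\|\varphi_1\|=1$ and $\|h\|_{\F}\le p$ on $Y$.
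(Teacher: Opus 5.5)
Your proposal is correct and follows the paper's proof: you split the product norm into its $\varphi_1$ and $\varphi_2$ components, bound the joint supremum by the sum of the two separate suprema, and invoke Lemma~\ref{lemma:phi1-ULLN} and Lemma~\ref{lemma:h-ULLN}. Lemma~\ref{lemma:h-ULLN} is already stated for $\varphi_2$, so your explicit cancellation of $\Sigma/p$ only repeats the first line of that lemma's proof and is harmless.
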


\begin{proof}
    The result is a direct consequence of Lemma \ref{lemma:phi1-ULLN} and \ref{lemma:h-ULLN}. Indeed, 
    \begin{align*}
        \left\|\Phi_n(\mu,\Sigma)-\Phi(\mu,\Sigma)\right\| = \left\|\frac{1}{n}\sum_{i=1}^n \varphi_1(\mu,\xi_i)-\E[\varphi_1(\mu,\xi)]\right\| + \left\|\frac{1}{n}\sum_{i=1}^n\varphi_2(\mu,\Sigma,\xi_i)-\E[\varphi_2(\mu,\Sigma,\xi)]\right\|_{\F}.
    \end{align*}
    Then 
    \begin{align*}
        \sup_{(\mu,\Sigma)\in X\times Y} \left\|\Phi_n(\mu,\Sigma)-\Phi(\mu,\Sigma)\right\| \leq &\sup_{\mu\in X} \left\|\frac{1}{n}\sum_{i=1}^n \varphi_1(x,\xi_i)-\E[\varphi_1(x,\xi)]\right\| \\&+\sup_{(\mu,\Sigma)\in X\times Y} \left\|\frac{1}{n}\sum_{i=1}^n\varphi_2(\mu,\Sigma,\xi_i)-\E[\varphi_2(\mu,\Sigma,\xi)]\right\|_{\F} \asto 0.
    \end{align*}
\end{proof}

\begin{lemma}\label{lemma:inf-Phi-ge-0}
    Let $X\subset \R^p$ and $Y\subset \{\Sigma\in \PD: \tr(\Sigma) = p\}$ be compact sets. 
    Let $\Phi(\mu,\Sigma)$ be defined as in \eqref{eq:expected-phi-ERENJ}.
    Then for any $\delta>0$, it holds that 
    \[
        \inf_{(\mu,\Sigma)\in X\times Y: \|\mu-\mu_0\|+\|\Sigma-\Sigma_0\|_{\F}\geq \delta} \|\Phi(\mu,\Sigma)\| > 0.
    \]
\end{lemma}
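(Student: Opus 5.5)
The plan is a compactness argument: $\Phi$ is continuous and nonvanishing on a compact set, so it is bounded away from zero there. Let $K_\delta \Let \{(\mu,\Sigma)\in X\times Y: \|\mu-\mu_0\|+\|\Sigma-\Sigma_0\|_{\F}\geq \delta\}$. This is a closed subset of the compact set $X\times Y$, hence compact. If $K_\delta$ is empty the infimum is $+\infty$ and there is nothing to prove. Otherwise it suffices to show two things: \emph{(a)} the map $(\mu,\Sigma)\mapsto \|\Phi(\mu,\Sigma)\|$ is continuous on $X\times Y$; and \emph{(b)} $\Phi(\mu,\Sigma)\neq 0$ for every $(\mu,\Sigma)\in K_\delta$. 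A continuous positive function on a nonempty compact set attains a positive minimum, which gives the claim.

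Step (b) is immediate from Proposition~\ref{prop:est-eq-mu-Sigma}. Every $(\mu,\Sigma)\in K_\delta$ lies in $\Theta$, since $Y$ consists of positive definite matrices of trace $p$. The unique root of $\Phi$ in $\Theta$ is $(\mu_0,\Sigma_0)$, and this point is excluded from $K_\delta$ because $\delta>0$.

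Step (a) is the only real work, and I will obtain it from the oscillation bounds already proved. Write $\Phi=(\E[\varphi_1],\,\Sigma/p-\E[h])$ with $h$ as in Lemma~\ref{lemma:h-ub}. Fix $(\mu',\Sigma')\in X\times Y$ and let $(\mu,\Sigma)\in X\times Y$ satisfy $\|\mu-\mu'\|+\|\Sigma-\Sigma'\|_{\F}\le\delta'$.
\begin{itemize}
\item For the location component, Lemma~\ref{lemma:phi-1-ub} gives
\[
\|\E[\varphi_1(\mu,\xi)]-\E[\varphi_1(\mu',\xi)]\|\le 2\prob(\|\xi-\mu'\|\le 2\delta')+2\delta'\,\E[1/\|\xi-\mu'\|].
\]
\item For the shape component, Lemma~\ref{lemma:h-ub} gives
\[
\|\E[h(\mu,\Sigma,\xi)]-\E[h(\mu',\Sigma',\xi)]\|_{\F}\le 2p\prob(\|\xi-\mu'\|\le 2\delta')+C\delta'+C\delta'\E[1/\|\xi-\mu'\|].
\]
\item The term $\Sigma/p$ is trivially continuous.
\end{itemize}
By Lemma~\ref{lemma:sup-P-E}, which uses $p\ge2$ and the bounded density, both right-hand sides tend to $0$ as $\delta'\to0$. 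This convergence is in fact uniform over $\mu'\in X$, so $\Phi$ is even uniformly continuous on $X\times Y$.

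The main obstacle is ensuring the expectations are justified, i.e.\ that $\varphi_1$ and $h$ are integrable. This holds because $\|\varphi_1\|=1$, $\|h\|_{\F}\le p$, and the event $\{\xi=\mu\}$ has probability zero since $\xi$ has a density. As an alternative to the explicit bounds above, I could use dominated convergence: $\varphi$ is continuous in $(\mu,\Sigma)$ at every $\xi\neq\mu$, which holds almost surely, and $\varphi$ is bounded, so continuity of $\Phi$ follows directly.
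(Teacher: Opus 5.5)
Your proposal is correct and follows essentially the paper's argument: uniqueness of the root $(\mu_0,\Sigma_0)$ in $\Theta$ from Proposition~\ref{prop:est-eq-mu-Sigma}, plus continuity of $\Phi$ on the compact set $X\times Y$; you also spell out the closed-subset/compactness step that the paper leaves implicit. The paper proves continuity by dominated convergence, which is your stated alternative. Your primary route through Lemmas~\ref{lemma:phi-1-ub}, \ref{lemma:h-ub} and \ref{lemma:sup-P-E} instead gives uniform continuity, but only by relying on $p\ge 2$ and the bounded density.
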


\begin{proof}
By Proposition~\ref{prop:est-eq-mu-Sigma},  $\Phi(\mu,\Sigma)=0$ has a unique solution
$(\mu_0,\Sigma_0)$ over the set $\R^p\times \{\Sigma\in \PD: \tr(\Sigma) = p\}$. Thus it suffices to show that
$\Phi(\mu,\Sigma)$ is continuous. 

In the following, we show that $\E[\varphi_1(\mu,\xi)]$ and $\E[\varphi_2(\mu,\Sigma,\xi)]$  are continuous in $\mu$ and $(\mu,\Sigma)$ respectively.
   Note that $\varphi_1(\mu,\xi)$ is a function parameterized 
   by $\xi$.   
By the definition 
of $\varphi_1(\cdot,\xi)$, it  is continuous 
at any point $\mu'\neq \xi$.
  Moreover, for each fixed $\mu$, $\|\varphi_1(\mu,\xi)\|\leq 1$ uniformly for all $\xi\neq \mu$.
  Thus
    \begin{align*}
    \lim_{\mu\to\mu'}\E[\varphi_1(\mu,\xi)] \;=\; &\lim_{\mu\to\mu'}\int_{\Xi\setminus\{\mu'\} }\varphi_1(\mu,\xi) P(\diff \xi) 
        =\; \int_{\Xi\setminus \{\mu'\} }\lim_{\mu\to\mu'} \varphi_1(\mu,\xi) P(\diff \xi)\\
        =\; &\int_{\Xi\setminus \{\mu'\}} \varphi_1(\mu',\xi) P(\diff \xi)
        =\; \E[\varphi_1(\mu',\xi)],
    \end{align*}
    where the second equality is ensured 
    by the dominated convergence theorem.
    This shows continuity of $\E[\varphi_1(\mu,\xi)]$ 
    as desired.

    Next, we  show continuity of 
    $\E[\varphi_2(\mu,\Sigma,\xi)]$ 
    in $(\mu,\Sigma)$.
    Since
\begin{align}
\label{eq:phi_2-EDRO-Renjie-lema9}    
\varphi_2(\mu,\Sigma,\xi)\Let 
\Sigma/p -h(\mu,\Sigma,\xi)
=
\Sigma/p - \frac{(\xi-\mu)(\xi-\mu)^\transpose}{(\xi-\mu)^\transpose\Sigma^{-1}(\xi-\mu)},
\end{align}
and     $\|h(\mu,\Sigma,\xi)\|_{\F} \leq \lambda_{\max}(\Sigma)\leq \tr(\Sigma) = p$, then we can  
also use the dominated convergence theorem to show the continuity of
$\E[\varphi_2(\mu,\Sigma,\xi)]$.
\end{proof}

\begin{lemma}\label{lemma:at-most-p}
    Let $\xi\in \R^p$ be a random vector with distribution absolutely continuous with respect to Lebesgue measure on $\R^p$. Let $\xi_1,\ldots,\xi_{p+1}$ be $p+1$ i.i.d. samples of $\xi$. Then 
    \[
        \prob(\xi_1,\ldots,\xi_{p+1}~\text{lie on a common hyperplane}) = 0.
    \]
\end{lemma}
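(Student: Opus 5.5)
The plan is to condition on the first $p$ samples and use Fubini's theorem, which reduces the statement to the fact that a hyperplane has Lebesgue measure zero. Write $A$ for the event that $\xi_1,\ldots,\xi_{p+1}$ lie on a common (affine) hyperplane. I would split $A$ into two pieces. The first is $A_1$, the event that $\xi_1,\ldots,\xi_p$ are affinely dependent, i.e.\ their affine hull has dimension at most $p-2$. The second is $A_2 = A\setminus A_1$.

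On $A_2$, the points $\xi_1,\ldots,\xi_p$ span a unique affine hyperplane $H(\xi_1,\ldots,\xi_p)$. Any hyperplane containing all $p+1$ points must then coincide with $H$, so $A_2\subseteq\{\xi_{p+1}\in H(\xi_1,\ldots,\xi_p)\}$. By independence and Fubini,
\[
\prob(A_2)\le \E\bigl[\prob(\xi_{p+1}\in H(\xi_1,\ldots,\xi_p)\mid \xi_1,\ldots,\xi_p)\,\mathds{1}_{A_1^c}\bigr].
\]
The inner probability is $P_\xi(H)$ for a fixed hyperplane $H$. This is zero, because $H$ has Lebesgue measure zero and $P_\xi$ is absolutely continuous. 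Hence $\prob(A_2)=0$.

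It remains to show $\prob(A_1)=0$, and I would do this by induction on $k$. The claim is that $\xi_1,\ldots,\xi_k$ are affinely independent almost surely for every $k\le p+1$. The base case $k=1$ is trivial. For the inductive step, suppose $\xi_1,\ldots,\xi_{k}$ are affinely independent with $k\le p$. Then $\xi_{k+1}$ destroys affine independence only if it falls in their affine hull. That hull is an affine subspace of dimension $k-1\le p-1$, hence Lebesgue-null. The same conditioning argument as above then gives probability zero. Applying the induction with $k=p$ gives $\prob(A_1)=0$, so $\prob(A)\le\prob(A_1)+\prob(A_2)=0$.

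The only point needing care is measurability. The set $\{(y_1,\ldots,y_{p+1}): y_{p+1}\in \mathrm{aff}(y_1,\ldots,y_k)\}$ must be Borel for Fubini to apply. I would handle this by writing the event algebraically: $y_1,\ldots,y_{p+1}$ lie on a common hyperplane if and only if
\[
\det\begin{pmatrix} y_1 & \cdots & y_{p+1}\\ 1&\cdots&1\end{pmatrix}=0 .
\]
This is the zero set of a polynomial, hence closed, and in particular Borel.

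This determinant formulation also gives a cleaner alternative to the induction. The determinant is a nonzero polynomial in $(y_1,\ldots,y_{p+1})\in\R^{p(p+1)}$; for instance, it is nonzero at $0,e_1,\ldots,e_p$. The zero set of a nonzero polynomial is Lebesgue-null. The joint law of $(\xi_1,\ldots,\xi_{p+1})$ is a product of absolutely continuous laws, so it is absolutely continuous on $\R^{p(p+1)}$, and the result follows at once.

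I would present the polynomial argument as the main proof, with the fact that the zero set of a nonzero polynomial is null as the one step to justify. That fact follows by induction on the number of variables using Fubini: for almost every choice of the other coordinates, the polynomial in the last variable is either nonzero with finitely many roots or identically zero, and the latter happens only on a null set.
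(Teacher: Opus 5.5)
Your proposal is correct, and your main argument takes a different route from the paper's. Your first argument (split off the event that $\xi_1,\ldots,\xi_p$ are affinely dependent, then induct on affine independence) is essentially the paper's proof. The paper sets $H_k=\operatorname{aff}(\xi_1,\ldots,\xi_k)$ and uses independence to get $\prob(\xi_{k+1}\in H_k)=0$ for $k=1,\ldots,p$, since $\dim H_k\le p-1$. It then concludes $\dim H_{p+1}=p$ almost surely, and it never addresses measurability.

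The proof you choose as your main one is different. You encode the event as the zero set of the single polynomial $\det\begin{pmatrix} y_1&\cdots&y_{p+1}\\ 1&\cdots&1\end{pmatrix}$ on $\R^{p(p+1)}$. You check it is not identically zero via the points $0,e_1,\ldots,e_p$, and then use that the joint law of $(\xi_1,\ldots,\xi_{p+1})$ is absolutely continuous there.

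This buys three things. Measurability comes for free. There is no conditioning on a random affine subspace. And the argument is more general, since it needs only absolute continuity of the joint law, not independence. The price is the auxiliary fact that zero sets of nonzero polynomials are Lebesgue-null, whereas the paper's sequential argument needs only the more elementary fact that proper affine subspaces are null. Your sketch of that fact is sound: induct on the number of variables with Tonelli, and apply the inductive hypothesis to a nonzero coefficient polynomial.

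One small inaccuracy: for the intermediate steps $k<p$ of your first argument, the determinant does not describe the set $\{y_{k+1}\in\operatorname{aff}(y_1,\ldots,y_k)\}$. The vanishing of the appropriate minors (a rank condition) does, so your measurability remark still goes through.
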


\begin{proof}
    Define
    \[
        H_k \Let \operatorname{aff}(\xi_1,\ldots,\xi_k),
        \qquad k=1,\ldots,p+1,
    \]
    where $\operatorname{aff}(\xi_1,\ldots,\xi_k)$ is the affine hull of $\xi_1,\ldots,\xi_k$.
    For each $k=1,\ldots,p$, the affine subspace $H_k$
    has dimension at most $k-1\leq p-1$ and hence has
    zero Lebesgue measure in $\R^p$.
    Since $\xi_{k+1}$ is independent of
    $(\xi_1,\ldots,\xi_k)$, we have
    \[
        \prob\bigl(
            \xi_{k+1}\in H_k
            \mid \xi_1,\ldots,\xi_k
        \bigr)
        =0
        \quad\text{almost surely}.
    \]
    Taking expectations of the above yields
    \[
        \prob(\xi_{k+1}\in H_k)=0,
        \qquad k=1,\ldots,p.
    \]
    Therefore, with probability one,
    $\xi_{k+1}\notin H_k$ simultaneously for all
    $k=1,\ldots,p$.

    On this probability-one event, adding $\xi_{k+1}$
    increases the dimension of the affine hull by one:
    \[
        \dim H_{k+1}=\dim H_k+1,
        \qquad k=1,\ldots,p.
    \]
    Since $\dim H_1=0$, it follows that
    $\dim H_{p+1}=p$ almost surely.
    Consequently,
    \[
        \prob\bigl(
            \xi_1,\ldots,\xi_{p+1}
            \text{ lie on a common hyperplane}
        \bigr)
        =
        \prob(\dim H_{p+1}<p)
        =0.
    \]
\end{proof}

\subsection{Proof of Proposition \ref{prop:E-uu}}

\begin{proof}[Proof of Proposition \ref{prop:E-uu}]
    Observe that $u$ is law-invariant under orthogonal transformation, i.e.,
    for any orthogonal matrix $Q$, 
    $u\diseq Qu$. Thus 
    $    Q\E[uu^\top]Q^\top = \E[uu^\top]$.
    Denote $\E[uu^\top]$ by $U$. 
    For any $x,y\in \mathbb{U}^{p-1}$, there exists an orthogonal matrix
    $\tilde{Q}$
    such that $x=\tilde{Q}y$. 
    Let $Q=\tilde{Q}$.
    Then 
    \[
        x^\top U x = (Qy)^\top U Qy = y^\top Q^\top U Q y = y^\top U y.
    \]
The relationship above 
implies that, there exists constant $c\geq 0$ such that    
    for any $x \in \mathbb{U}^{p-1}$, $x^\top U x = c =cx^\top x$. The latter implies 
$ x^\top(U-cI)x = 0$
for all $x \in \mathbb{U}^{p-1}$, 
    and hence $U=cI$.
        Finally, 
        we show $c=1/p$. 
   This follows from the fact that
    $
        cp = \tr(U) = \E[\|u\|^2] = 1.
    $
The proof is complete.
\end{proof}

\subsection{Proof of Theorem~\ref{thm:consist-gibbs}}

\begin{proof}[Proof of Theorem~\ref{thm:consist-gibbs}]
    Let $\delta> 0$ and $\eta>0$ be arbitrary but fixed. To ease the exposition, let
    \[
    A_\delta\triangleq\{\theta\in\Theta: \|\theta-\theta_0\|\geq \delta\}, \quad \Pi_n(A_\delta)\triangleq \Pi_n^\eta(A_\delta\mid \xi_1,\ldots,\xi_n).
    \]
   We show that $\Pi_n^\eta(A_\delta)\to 0$ almost surely as $n\to\infty$.
    To 
    this end, we first reformulate $\Pi_n^\eta(A_\delta)$ by
    \begin{align}
        \Pi_n^\eta(A_\delta) 
        \;=\; \frac{\int_{\theta\in A_\delta}\exp\left(-\eta n\left\|\Phi_n(\theta)\right\|\right)\pi(\theta)\diff \theta}{\int_{\theta \in\Theta}\exp\left(-\eta n\left\|\Phi_n(\theta)\right\|\right)\pi(\theta)\diff \theta}.
        \label{eq:Pi-n}
    \end{align}
    Denote the numerator and denominator of~\eqref{eq:Pi-n} by $N_n^\eta(A_\delta)$ and $D_n^\eta$, respectively. We claim that
    \begin{itemize}
        \item[(i)] $N_n^\eta(A_\delta)\leq \exp(-c_1\eta n)$ almost surely as $n\to\infty$ for some constant $c_1>0$ depending only on $\delta$;
        
        \item[(ii)] $D_n^\eta\geq \exp(-c_2\eta n)$ almost surely as $n\to\infty$ for any constant  $c_2>0$.
    \end{itemize}
    Suppose the above two claims hold and set $c_2<c_1$. Then, for all sufficiently large $n$, both $N_n^\eta(A_\delta)\leq \exp(-c_1 \eta n)$ and $D_n^\eta\geq \exp(-c_2\eta n)$ hold almost surely. This gives
    \[
        \Pi_n^\eta(A_\delta) = \frac{N_n^\eta(A_\delta)}{D_n^\eta} \leq \frac{\exp(-c_1\eta n)}{\exp(-c_2\eta n)} = \exp(-(c_1-c_2)\eta n) \to 0 \quad \text{as } n\to\infty,
    \]
    where the inequality holds almost surely for sufficiently large $n$. This shows \eqref{ineq:as-event} and $\Pi_n^\eta(A_\delta)\to 0$ almost surely. In the rest of the proof, we prove statements~(i) and (ii).

    \underline{Statement (i).}
    Since $\int_{\theta\in A_\delta}\pi(\theta)\diff\theta \leq 1$, we have
    \[
        N_n^\eta(A_\delta) = \int_{\theta\in A_\delta}\exp(-\eta n\|\Phi_n(\theta)\|)\pi(\theta)\diff\theta
        \leq \sup_{\theta\in A_\delta}\exp(-\eta n\|\Phi_n(\theta)\|) = \exp\!\Big(-\eta n\inf_{\theta\in A_\delta}\|\Phi_n(\theta)\|\Big).
    \]
    Let
    \[
        Z_\delta
        \Let
        \liminf_{n\to\infty}
        \inf_{\theta\in A_\delta}
        \|\Phi_n(\theta)\|.
    \]
    By condition (b), $Z_\delta>0$ almost surely. We claim that there exists a constant $z_\delta\in \R_+$ such that $Z_\delta = z_\delta$ almost surely. To show this, note that $Z_\delta$ is unchanged by any finite permutation of the observations $\xi_1,\ldots,\xi_n,\ldots$ Then the Hewitt--Savage zero--one law~\citep[Theorem 11.3]{ref:hewitt1955symmetric} implies that for any $z_\delta'\in \R_+$, either $\P(Z_\delta\leq z_\delta') = 0$ or $\P(Z_\delta\leq z_\delta') = 1$. Since $\P(Z_\delta > 0) = 1$, the claim holds.

   To show (i), by the above claim, there exists a constant $c_1>0$ (depending only on $\delta$), say $c_1=z_\delta/2$, such that
    \[
        \inf_{\theta\in A_\delta}\|\Phi_n(\theta)\| \geq c_1 \quad \text{for all sufficiently large } n, \quad \text{a.s.}
    \]
    Therefore, almost surely for all sufficiently large $n$,
    $
        N_n^\eta(A_\delta) \leq \exp(-c_1 \eta n),
    $
    which completes the proof.

    \underline{Statement (ii).} Let $c_2>0$ be arbitrary but fixed, let 
    \bgeq 
    B_{c_2}\triangleq\{\theta\in\Theta: \|\Phi(\theta)\|\leq c_2/2\}.
    \edeq 
    Then
    \begin{align}
        \exp(c_2\eta n)D_n^\eta\;=\;&\int_{\theta\in\Theta}\exp\big(\eta n(c_2-\|\Phi_n(\theta)\|)\big)\pi(\theta)\diff \theta\nonumber\\
        \;\geq\;&\int_{\theta\in B_{c_2}}\exp\big(\eta n(c_2-\|\Phi_n(\theta)\|)\big)\pi(\theta)\diff \theta.\label{ineq:clm-b-1}
    \end{align}
    Denote the right hand side of~\eqref{ineq:clm-b-1} by $\widetilde{D}_n^\eta$. 
    To show claim (b), it suffices to prove that $\lim_{n\to\infty}\widetilde{D}_n^\eta=\infty$ almost surely. Under 
    condition (a), 
    for every fixed $\theta\in \Theta$, the law of large numbers implies that $\Phi_n(\theta)\to\Phi(\theta)$ almost surely. Then 
    $
        \lim_{n\to\infty} \|\Phi_n(\theta)\| \leq c_2/2,~~\text{a.s.},
    $
    for every fixed $\theta\in B_{c_2}$.
    Consequently, we have
    \[
        \lim_{n\to\infty}\exp\big(\eta n(c_2-\|\Phi_n(\theta)\|)\big) = \infty,~~\text{a.s.},
    \]
    for every fixed $\theta\in B_{c_2}$. By Fatou's lemma and 
    condition (c), 
    \[
        \liminf_{n\to\infty} \widetilde{D}_n^\eta \geq \int_{\theta\in B_{c_2}} \left(\lim_{n\to\infty}\exp\big(\eta n(c_2-\|\Phi_n(\theta)\|)\big)\right) \pi(\theta) \diff \theta = \infty,~~\text{a.s.},
    \]
    which completes the proof.
\end{proof}

\subsection{Proof of Proposition \ref{prop:est-eq-mu}}

\begin{proof}
[Proof of Proposition \ref{prop:est-eq-mu}]
    Define an auxiliary function by $\Psi(\mu)\Let \E[\phi(\mu,\xi)]$,
    where $\phi(\mu,\xi)\Let \|\xi-\mu\|-\|\xi-\mu_0\|$. Then $\Psi(\mu)$ is finite for any $\mu\in\R^p$, since 
    $
        \left|\|\xi-\mu\|-\|\xi-\mu_0\|\right| \leq \|\mu-\mu_0\|
    $
    by the reverse triangle inequality. Moreover, note that for $\mu\neq \xi$, $\nabla_\mu \phi(\mu,\xi)=(\mu-\xi)/\|\xi-\mu\|$ and $\|\nabla_\mu \phi(\mu,\xi)\|=1$. By the dominated convergence theorem, 
    \[
        \nabla \Psi(\mu) = \E\left[\frac{\mu-\xi}{\|\xi-\mu\|}\right] = -\E[\varphi_1(\mu,\xi)].
    \]
    Since
    the distribution of $\xi$
    is not supported on a single affine line, then 
    $\Psi(\mu)$ is a strictly convex function (we will show this shortly),
    thus it has 
    at most one global minimizer.

\underline{Proof of (i)}. It suffices to show that $\mu=\mu_0$ is a global minimizer of $\Psi(\mu)$. To see this, we first note that for any $\delta\in\R^p$, 
    \begin{align*}
    \widetilde{\Psi}(\delta)
    \;\Let\;\Psi(\mu_0+\delta)
    &= \E[\|\xi-\mu_0-\delta\|-\|\xi-\mu_0\|]\\
    &= \E[\|r\Sigma_0^{1/2}u-\delta\|
           -\|r\Sigma_0^{1/2}u\|]\\
    &= \E[\|-r\Sigma_0^{1/2}u-\delta\|
           -\|-r\Sigma_0^{1/2}u\|]\\
    &= \E[\|r\Sigma_0^{1/2}u+\delta\|
           -\|r\Sigma_0^{1/2}u\|]\\
    &= \Psi(\mu_0-\delta)\\
    &= \widetilde{\Psi}(-\delta),
\end{align*}
    where 
   the expectation is taken w.r.t.~the joint distribution of $r$ and $u$, and 
    the third line is by the fact that $r\Sigma_0^{1/2}u\diseq -r\Sigma_0^{1/2}u$ since $u$ is uniformly distributed over the sphere. Moreover, since $\widetilde{\Psi}(\delta)$ is a convex function by the convexity of $\Psi$, it holds that for any $\delta\in\R^p$, 
    \[
        \widetilde{\Psi}(0) \;=\; \widetilde{\Psi}\left(\frac{1}{2}\delta + \frac{1}{2}(-\delta)\right) \;\leq\; \frac{1}{2}\widetilde{\Psi}(\delta) + \frac{1}{2}\widetilde{\Psi}(-\delta) = \widetilde{\Psi}(\delta), 
    \]
    which means that $\delta=0$ is the global minimizer of $\widetilde{\Psi}(\delta)$ and hence, $\mu=\mu_0$ is the global minimizer of $\Psi(\mu)$. 

    \underline{Proof of (ii)}. We first note that $\E[\varphi_1(\mu,\xi)]$ is continuous in $\mu$. To see this, let $\mu'$ be fixed and $\{x_\ell\}_{\ell=1}^{\infty}$ be a sequence that converges to $\mu'$. Then $\varphi_1(x_\ell,\xi)\to \varphi_1(\mu',\xi)$ for almost every $\xi$, since $\varphi_1(x,\xi)$ is continuous if $x\neq \xi$. Moreover, note that $\|\varphi_1(x,\xi)\|$ is bounded by $1$. Then by the dominated convergence theorem, it holds that $\E[\varphi_1(x_\ell,\xi)]\to \E[\varphi_1(\mu',\xi)]$ for any fixed $\mu'$ and sequence $x_\ell\to \mu'$, i.e., $\E[\varphi_1(\mu,\xi)]$ is continuous in $\mu$. 
    Then for any $M>\delta>0$, by the continuity of $\E[\varphi_1(\mu,\xi)]$ and the fact that $\mu=\mu_0$ is the unique root, it holds that $\varepsilon_1\Let \inf_{\delta\leq \|\mu-\mu_0\|\leq M} \|\E[\varphi_1(\mu,\xi)]\|>0$. Then to show (ii), it remains to show that there exists $M>0$ such that if $\|\mu-\mu_0\|\geq M$, then $\|\E[\varphi_1(\mu,\xi)]\|\geq 1/2$, since one can choose $\varepsilon = \min\{1/2, \varepsilon_1\} > 0$ and it holds that $\inf_{\|\mu-\mu_0\|\geq \delta}\|\E[\varphi_1(\mu,\xi)]\| \geq \varepsilon$, which is (ii). Indeed, Lemma~\ref{lemma:E-phi-1-inf} readily implies this argument, since one can choose $M$ sufficiently large so that $\|\mu\|\geq \|\mu-\mu_0\|-\|\mu_0\|\geq M-\|\mu_0\|$ is large enough and $\|\E[\varphi_1(\mu,\xi)]\|$ is close to $1$. This completes the proof of (ii).

    Now it remains to show that $\Psi(\mu)=\E[\phi(\mu,\xi)]$ is strictly convex for $p\geq 2$. We note that $\phi(\mu,\xi)$ is convex in $\mu$ for any fixed $\xi$, i.e., for any $\mu_1,\mu_2\in\R^p$, $\mu_1\neq \mu_2$ and $\lambda\in (0,1)$,
    \begin{align*}
        \phi(\lambda \mu_1 + (1-\lambda) \mu_2,\xi)
        \;=\; &\|\xi - (\lambda \mu_1 + (1-\lambda) \mu_2)\| - \|\xi-\mu_0\|\\
        \;\leq\; &\lambda\|\xi-\mu_1\| + (1-\lambda)\|\xi-\mu_2\|- \|\xi-\mu_0\|\\
        \;=\; &\lambda \phi(\mu_1,\xi) + (1-\lambda) \phi(\mu_2,\xi).
    \end{align*}
    Equality in the above triangle inequality can hold only if
$\xi$ belongs to the affine line
\[
    L\Let\{\mu_1+t(\mu_2-\mu_1):t\in\R\}.
\]
Since $p\geq 2$ and $\xi$ admits a density with respect to
Lebesgue measure on $\R^p$, we have $\mathbb{P}(\xi\in L)=0$.
Consequently, for every fixed pair
$\mu_1,\mu_2\in\R^p$ with $\mu_1\neq\mu_2$
and every $\lambda\in(0,1)$,
\begin{align}\label{ineq:str-cvx-as}
    \phi(\lambda\mu_1+(1-\lambda)\mu_2,\xi)
    <
    \lambda\phi(\mu_1,\xi)
    +(1-\lambda)\phi(\mu_2,\xi)
    \quad\text{almost surely}.
\end{align}
    Taking expectation on both sides of~\eqref{ineq:str-cvx-as}, we derive that for any $\mu_1,\mu_2\in\R^p$,$\mu_1\neq \mu_2$ and $\lambda\in(0,1)$,
    \[
        \Psi(\lambda\mu_1+(1-\lambda)\mu_2) < \lambda \Psi(\mu_1)+(1-\lambda)\Psi(\mu_2),
    \]
    i.e., $\Psi(\mu)$ is strictly convex.
\end{proof}

\subsection{Proof of Proposition \ref{prop:est-eq-sigma}}

\begin{proof}
[Proof of Proposition \ref{prop:est-eq-sigma}]
We first check that $\E[\varphi_2(\mu_0,c\Sigma_0,\xi)]=0$ for any $c\in\R_{++}$, and then show that $\E[\varphi_2(\mu_0,\Sigma,\xi)]=0$ implies that $\Sigma=c\Sigma_0$ for some constant $c\in\R_{++}$.
By the 
representation of $\xi$ in \eqref{eq:EC-stochastic-rep}, 
\begin{align*}
    \varphi_2(\mu_0,c\Sigma_0,\xi) \;=\; &c\Sigma_0/p - c\cdot\frac{(\xi-\mu_0)(\xi-\mu_0)^\transpose}{(\xi-\mu_0)^\transpose \Sigma_0^{-1}(\xi-\mu_0)}\\
    \;=\; &c\Sigma_0/p  - c\cdot\frac{r^2\Sigma_0^{1/2}u u^\transpose \Sigma_0^{1/2}}{r^2 u^\transpose \Sigma_0^{1/2} \Sigma_0^{-1}\Sigma_0^{1/2}u}\\
    \;=\;& c\Sigma_0/p - c\cdot\frac{\Sigma_0^{1/2}u u^\transpose \Sigma_0^{1/2}}{u^\transpose u}\\
    \;=\; &c\Sigma_0/p - c\cdot\Sigma_0^{1/2}u u^\transpose \Sigma_0^{1/2}.
\end{align*}
Since $\E[uu^\transpose] = I/p$ (see Proposition \ref{prop:E-uu}),
by taking expectation on both sides of the equation above, we obtain 
$\E[\varphi_2(\mu_0,c\Sigma_0,\xi)]=0$.


Next, we show the ``only if'' part. 
Observe that
\begin{align}
    \E[\varphi_2(\mu_0,\Sigma,\xi)]=0 \;\Longleftrightarrow\; &\Sigma/p = \E\left[\frac{\Sigma_0^{1/2}uu^\transpose\Sigma_0^{1/2}}{u^\transpose\Sigma_0^{1/2}\Sigma^{-1}\Sigma_0^{1/2}u}\right]\nonumber\\
    \;\Longrightarrow\; &(\Sigma_0^{1/2}\Sigma^{-1}\Sigma_0^{1/2})^{-1}/p = \E\left[\frac{uu^\transpose}{u^\transpose\Sigma_0^{1/2}\Sigma^{-1}\Sigma_0^{1/2}u}\right]\nonumber\\
    \text{(Let $X\Let\Sigma_0^{1/2}\Sigma^{-1}\Sigma_0^{1/2}$)}\;\Longrightarrow\;&X^{-1}/p=\E\left[\frac{uu^\transpose}{u^\transpose X u}\right].\label{eq:X-inv/p}
\end{align}
It remains to show that~\eqref{eq:X-inv/p} implies that $X=cI$ for some constant $c\in\R_{++}$. 
Let $X=Q\Lambda Q^\transpose, \Lambda\Let \diag(\lambda_1,\ldots,\lambda_p), \lambda_i>0$ be the spectral decomposition of $X$. 
Then it suffices to show that $\Lambda=cI$ for some constant $c\in\R_{++}$. 
Since $u\diseq Qu$ for the orthogonal matrix $Q$,
\begin{align}
    \eqref{eq:X-inv/p}\;\Longrightarrow\; &Q\Lambda^{-1}Q^\transpose/p = Q\E\left[\frac{uu^\transpose}{u^\transpose\Lambda u}\right]Q^\transpose\nonumber\\
    \;\Longrightarrow\; &\Lambda^{-1}/p = \E\left[\frac{uu^\transpose}{u^\transpose\Lambda u}\right]\nonumber\\
    \;\Longrightarrow\; & 1/p = \E\left[\frac{\lambda_i u_i^2}{\sum_{k=1}^p \lambda_ku_k^2}\right]~~\text{for}\; i=1,\ldots,p.\label{eq:1/p-lambda}
\end{align}
In what follows, we show that
$\lambda_1=\ldots=\lambda_p=c$.
Assume for the sake of 
a contradiction that there exists an index $i^*$ such that $\lambda_{i^*}=\max_{1\leq i\leq p}\lambda_i$ and $\lambda_{i^*}>\lambda_j$ for some $j\neq i^*$. 
Since $u\in\mathbb{U}^{p-1}$, then 
\begin{align*}
    \sum_{k=1}^p \lambda_k u_k^2 < \lambda_{i^*} \sum_{k=1}^p u_k^2 = \lambda_{i^*}~~\text{almost surely},
\end{align*}
which implies
\begin{align}\label{ineq:g-1/p}
    \frac{\lambda_{i^*}u_{i^*}^2}{\sum_{k=1}^p \lambda_k u_k^2} > u_{i^*}^2~~\text{almost surely} \;\Longrightarrow\; \E\left[\frac{\lambda_{i^*}u_{i^*}^2}{\sum_{k=1}^p \lambda_k u_k^2}\right] > \E[u_{i^*}^2] = \frac{1}{p},
\end{align}
where the last equality is by the fact that $\E[uu^\transpose]=I/p$. However, \eqref{ineq:g-1/p} contradicts \eqref{eq:1/p-lambda}, which means that $\Lambda=cI$ for some $c\in\R_{++}$ and completes the proof of this part.
\end{proof}

\subsection{Proof of Proposition \ref{prop:est-eq-mu-Sigma}}

\begin{proof}
[Proof of Proposition \ref{prop:est-eq-mu-Sigma}]
    The conclusion 
    follows directly from Propositions~\ref{prop:est-eq-mu} and~\ref{prop:est-eq-sigma}.  We 
    include a brief proof for completeness. First, $(\mu_0,\Sigma_0)$ is 
    solution to $\E[\varphi(\mu,\Sigma,\xi)]=0$ in that Proposition~\ref{prop:est-eq-mu} ensures $\E[\varphi_1(\mu_0,\xi)]=0$, and Proposition~\ref{prop:est-eq-sigma}  ensures $\E[\varphi_2(\mu_0,\Sigma_0,\xi)]=0$.
    
    To prove the uniqueness, let 
    $(\mu',\Sigma')\in\Theta$ satisfy $\E[\varphi(\mu',\Sigma',\xi)]=0$. By Propositions~\ref{prop:est-eq-mu}~and~\ref{prop:est-eq-sigma}, we have $\mu'=\mu_0$ and $\Sigma'=c\Sigma_0$ for some $c\in\R_{++}$. Since $\tr(\Sigma')=\tr(\Sigma_0)=p$, we must have $c=1$, and hence $\Sigma'=\Sigma_0$. Therefore, the solution is unique.
\end{proof}

\subsection{Proof of Theorem \ref{thm:gibbs-mu-sigma-consistency}}

\begin{proof}
   We use Theorem~\ref{thm:consist-gibbs} to prove the results in this theorem.  
   Thus, it suffices to 
verify   
the conditions (a) and (b) in Theorem~\ref{thm:consist-gibbs}, i.e.,
(a) $\E[\|\varphi(\mu,\Sigma,\xi)\|]<\infty$ for all 
$(\mu,\Sigma)\in \Theta$, and (b) for any $\delta >0$,
    \[
        \liminf_{n\to\infty} \inf_{(\mu,\Sigma)\in\Theta:\|(\mu,\Sigma)-(\mu_0,\Sigma_0)\|\geq \delta} \|\Phi_n(\mu,\Sigma)\| > 0~~\text{almost surely,}
    \]
    where 
    \bgeqn 
    \label{eq:Phi_n-EDRO}
    \Phi_n(\mu,\Sigma)\Let \frac{1}{n}\sum_{i=1}^n \varphi(\mu,\Sigma,\xi_i). \edeqn 

\noindent
 \underline{Condition (a)}. 
 By definition, $\|\varphi(\mu,\Sigma,\xi)\|=\|\varphi_1(\mu,\xi)\|_2+\|\varphi_2(\mu,\Sigma,\xi)\|_{\text{F}}$. Since $\|\varphi_1(\mu,\xi)\|_2=1$ for all $\xi\in \R^p$, it suffices to show that $\E[\|\varphi_2(\mu,\Sigma,\xi)\|_{\text{F}}]< \infty$. Observe that
    \begin{align*}
        \|\varphi_2(\mu,\Sigma,\xi)\|_{\text{F}} \;=\; &\left\|\Sigma/p - \frac{(\xi-\mu)(\xi-\mu)^\transpose}{(\xi-\mu)^\transpose\Sigma^{-1}(\xi-\mu)}\right\|_{\text{F}}
        \;\leq\; \|\Sigma\|_{\text{F}}/p + \left\|\frac{(\xi-\mu)(\xi-\mu)^\transpose}{(\xi-\mu)^\transpose\Sigma^{-1}(\xi-\mu)}\right\|_{\text{F}}\\
        \;=\; &\|\Sigma\|_{\text{F}}/p + \frac{\left\|(\xi-\mu)(\xi-\mu)^\transpose\right\|_{\text{F}}}{(\xi-\mu)^\transpose\Sigma^{-1}(\xi-\mu)}
        \;=\; \|\Sigma\|_{\text{F}}/p + \frac{\left\|(\xi-\mu)\right\|_2^2}{(\xi-\mu)^\transpose\Sigma^{-1}(\xi-\mu)}.
    \end{align*}
    Since $\Sigma\in \PD$, then
    $
        (\xi-\mu)^\transpose\Sigma^{-1}(\xi-\mu) \geq \|\xi-\mu\|_2^2/\lambda_{\max}(\Sigma).
    $
    Consequently
    $
        \|\varphi_2(\mu,\Sigma,\xi)\|_{\text{F}} \leq \|\Sigma\|_{\text{F}}/p + \lambda_{\max}(\Sigma)<\infty
    $
    uniformly  for all 
$(\mu,\Sigma)\in \Theta$ and hence $\E[\|\varphi_2(\mu,\Sigma,\xi)\|_{\text{F}}] < \infty$.
    

\vspace{0.2cm}
   \noindent
 \underline{Condition (b)}. Let $M > 0$ be a sufficiently large constant, 
 and $c>0$ be a sufficiently small constant, both to be specified shortly.
It suffices to show that for any fixed $0<\delta<1$, the following three inequalities hold almost surely:
  \begin{subequations}
    \bgeqn 
        && \liminf_{n\to\infty} \inf_{\substack{\|(\mu,\Sigma)-(\mu_0,\Sigma_0)\|\geq \delta,\\ \|\mu\|\leq M, \lambda_{\min}(\Sigma)\geq c,\\(\mu,\Sigma)\in\Theta}} \|\Phi_n(\mu,\Sigma)\| > 0, 
        \label{ineq:b1}\\
&& \liminf_{n\to\infty} \inf_{\substack{\|(\mu,\Sigma)-(\mu_0,\Sigma_0)\|\geq \delta,\\ \|\mu\|\geq M,(\mu,\Sigma)\in\Theta}} \|\Phi_n(\mu,\Sigma)\| > 0,
         \label{ineq:b2}\\
&&        
\liminf_{n\to\infty} \inf_{\substack{\|(\mu,\Sigma)-(\mu_0,\Sigma_0)\|\geq \delta,\\ \lambda_{\min}(\Sigma)\leq c,(\mu,\Sigma)\in\Theta}} \|\Phi_n(\mu,\Sigma)\| > 0.
\label{ineq:b3}
\edeqn 
  \end{subequations}
  This is because 
the union of the feasible sets in \eqref{ineq:b1}-\eqref{ineq:b3} is $\{(\mu,\Sigma)\in \Theta:\|(\mu,\Sigma)-(\mu_0,\Sigma_0)\|\geq \delta\}$ and 
\eqref{ineq:b1}--\eqref{ineq:b3} implies condition (b). 
    
\underline{Proof of \eqref{ineq:b1}}. 
To ease the notation, let
    \bgeq \Theta_{\delta,M,c}\Let \{(\mu,\Sigma)\in\Theta:\|(\mu,\Sigma)-(\mu_0,\Sigma_0)\|\geq \delta, \|\mu\|\leq M, \lambda_{\min}(\Sigma)\geq c\}.
    \edeq 
    Note that $\Theta_{\delta,M,c}$ is a compact set. 
    Since $\tr(\Sigma)=p$ and all eigenvalues of $\Sigma$ are nonnegative,
    then 
    $\lambda_{\max}(\Sigma)\leq p$.
    By Lemma \ref{lemma:inf-Phi-ge-0}, 
    \[
        \inf_{(\mu,\Sigma)\in \Theta_{\delta,M,c}} \|\Phi(\mu,\Sigma)\| > 0.
    \]
    Moreover, by Lemma \ref{lemma:ULLN}, we have 
    \[
        \sup_{(\mu,\Sigma)\in \Theta_{\delta,M,c}} \left\|\Phi_n(\mu,\Sigma)-\Phi(\mu,\Sigma)\right\| \asto 0.
    \]
    By the reverse triangle inequality, 
    \begin{align*}
        \inf_{(\mu,\Sigma)\in \Theta_{\delta,M,c}} \|\Phi_n(\mu,\Sigma)\| \;\geq\; &\inf_{(\mu,\Sigma)\in \Theta_{\delta,M,c}} \big\{\|\Phi(\mu,\Sigma)\| - \|\Phi_n(\mu,\Sigma)-\Phi(\mu,\Sigma)\|\big\}\\
        \geq\; &\inf_{(\mu,\Sigma)\in \Theta_{\delta,M,c}} \|\Phi(\mu,\Sigma)\| - \sup_{(\mu,\Sigma)\in \Theta_{\delta,M,c}} \left\|\Phi_n(\mu,\Sigma)-\Phi(\mu,\Sigma)\right\|\\
        >\;&0,
    \end{align*}
    almost surely as $n\to \infty$, which completes the proof of \eqref{ineq:b1}.

   \underline{  Proof of \eqref{ineq:b2}}. Let $M$ be chosen sufficiently large such that $M> \|\mu_0\| + 1$. Then 
    \[
        \|\mu\| \geq M \;\Rightarrow\; \|\mu\| \geq \|\mu_0\| + 1 \;\Rightarrow\; \|\mu-\mu_0\| 
        \geq 
        \|\mu\| - \|\mu_0\| > 1 > \delta,
    \]
which means constraint    $\|(\mu,\Sigma)-(\mu_0,\Sigma_0)\|\geq \delta$
is subsumed by 
constraint $\|\mu\|\geq M$. 
    Consequently, 
    \[
        \inf_{\substack{\|(\mu,\Sigma)-(\mu_0,\Sigma_0)\|\geq \delta,\\ \|\mu\|\geq M,(\mu,\Sigma)\in\Theta}} \|\Phi_n(\mu,\Sigma)\|
        = \inf_{\substack{\|\mu\|\geq M, (\mu,\Sigma)\in\Theta}} \|\Phi_n(\mu,\Sigma)\|.
    \]
    Moreover, since
    \begin{align*}
        \inf_{\substack{\|\mu\|\geq M,(\mu,\Sigma)\in\Theta}} \|\Phi_n(\mu,\Sigma)\| = &\inf_{\substack{\|\mu\|\geq M, (\mu,\Sigma)\in\Theta}} \left\|\frac{1}{n}\sum_{i=1}^n \varphi_1(\mu,\xi_i)\right\| + \left\|\frac{1}{n}\sum_{i=1}^n \varphi_2(\mu,\Sigma,\xi_i)\right\|_{\F} \\
        \geq\; &\inf_{\mu\in\R^p, \|\mu\|\geq M} \left\|\frac{1}{n}\sum_{i=1}^n \varphi_1(\mu,\xi_i)\right\|,
    \end{align*}
    then it is enough to show that
    \[
        \liminf_{n\to\infty} \inf_{\mu\in\R^p, \|\mu\|\geq M} \left\|\frac{1}{n}\sum_{i=1}^n \varphi_1(\mu,\xi_i)\right\| > 0,~\text{almost surely.}
    \]
Since $r$ is a single-valued random variable, its distribution is tight and hence the distribution of $\xi$ is also tight.
     Let $R>0$ be a sufficiently large constant such that 
    $
        P(\|\xi\| > R) < 1/8.
$
    We note that
    \begin{align}
        \left\|\frac{1}{n}\sum_{i=1}^n \varphi_1(\mu,\xi_i)\right\| \;=\; & \left\|\frac{1}{n}\sum_{i=1}^n \left(\varphi_1(\mu,\xi_i)\mathds{1}_{\{\|\xi_i\| \le R\}} + \varphi_1(\mu,\xi_i)\mathds{1}_{\{\|\xi_i\| > R\}}\right)\right\| \label{ineq:b2-1}
    \end{align}
    It remains to find a lower bound for the right hand side of \eqref{ineq:b2-1} as $n\to\infty$. Let $u\Let \mu/\|\mu\|$. By Cauchy–Schwarz inequality, for any $x\in \R^p$, 
 $   
        -u^\top x \leq |u^\top x| \leq \|u\|\|x\| = \|x\|. 
    $
    Then
    \begin{align}\label{ineq:b2-2}
        \text{RHS of}~\eqref{ineq:b2-1}\geq \frac{1}{n}\sum_{i=1}^n \left(-u^\top\frac{\xi_i-\mu}{\|\xi_i-\mu\|}\right)\mathds{1}_{\{\|\xi_i\| \le R\}} + \frac{1}{n}\sum_{i=1}^n \left(-u^\top\frac{\xi_i-\mu}{\|\xi_i-\mu\|}\right)\mathds{1}_{\{\|\xi_i\| > R\}} .
    \end{align}
  If $\|\xi_i\| \le R$, then 
    \[
        -u^\top\frac{\xi_i-\mu}{\|\xi_i-\mu\|} = \frac{-u^\top\xi_i+\|\mu\|}{\|\xi_i-\mu\|} \geq \frac{\|\mu\|-R}{\|\mu\|+R},
    \]
    where the inequality is by $u^\top\xi_i \leq \|\xi_i\| \leq R$ and $\|\xi_i-\mu\| \leq \|\mu\|+\|\xi_i\|\leq \|\mu\|+R$. Now let $M$ be sufficiently large such that $M>2R$, then for $\|\mu\| \geq M > 2R$, 
    \begin{align*}
         \frac{1}{n}\sum_{i=1}^n \left(-u^\top\frac{\xi_i-\mu}{\|\xi_i-\mu\|}\right)\mathds{1}_{\{\|\xi_i\| \le R\}} \geq \frac{1}{3}\cdot\frac{1}{n}\sum_{i=1}^n \mathds{1}_{\{\|\xi_i\| \le R\}}.
    \end{align*}
    For the second term in the right hand side of \eqref{ineq:b2-2}, we use the trivial bound
    \[
     \frac{1}{n}\sum_{i=1}^n \left(-u^\top\frac{\xi_i-\mu}{\|\xi_i-\mu\|}\right)\mathds{1}_{\{\|\xi_i\| > R\}} \geq -\frac{1}{n}\sum_{i=1}^n \mathds{1}_{\{\|\xi_i\| > R\}},
    \]
    where the inequality is by the fact that, for any unit vectors $x$ and $y$, $x^\top y \leq 1$.
    Combining the above two lower bounds, we note that for any $\mu$ satisfying $\|\mu\| \geq M > 2R$,
    \begin{align*}
        \text{RHS of \eqref{ineq:b2-1}} \;\geq\; \frac{1}{3}\cdot\frac{1}{n}\sum_{i=1}^n \mathds{1}_{\{\|\xi_i\| \le R\}} -\frac{1}{n}\sum_{i=1}^n \mathds{1}_{\{\|\xi_i\| > R\}}
        \;=\;  \frac{1}{3}-\frac{4}{3}\cdot \frac{1}{n}\sum_{i=1}^n \mathds{1}_{\{\|\xi_i\| > R\}}.
    \end{align*}
    In particular, by the choice of $R$ and strong law of large numbers, it holds that $\lim_{n\to\infty}\frac{1}{n}\sum_{i=1}^n \mathds{1}_{\{\|\xi_i\| > R\}} < 1/8 $, and thus, by setting $M > \max\{\|\mu_0\|+1, 2R\}$, we derive that
    \begin{align}
        \liminf_{n\to\infty} \inf_{\mu\in\R^p, \|\mu\|\geq M} \left\|\frac{1}{n}\sum_{i=1}^n \varphi_1(\mu,\xi_i)\right\| \geq \liminf_{n\to\infty} \left\{\frac{1}{3}-\frac{4}{3}\cdot \frac{1}{n}\sum_{i=1}^n \mathds{1}_{\{\|\xi_i\| > R\}}\right\} > 0, \label{ineq:mu>M}
    \end{align}
    almost surely.

    \underline{ Proof of \eqref{ineq:b3}}. 
    Recall that by the definition, 
    \begin{align}
        \label{eq:b3-Phi}
        \|\Phi_n(\mu,\Sigma)\| = \left\|\frac{1}{n}\sum_{i=1}^n \varphi_1(\mu,\xi_i)\right\| + \left\|\frac{1}{n}\sum_{i=1}^n \varphi_2(\mu,\Sigma,\xi_i)\right\|_{\F}.
    \end{align}
    In \eqref{ineq:mu>M}, we have shown that the first term of \eqref{eq:b3-Phi} is strictly positive as $\|\mu\|\geq M$. 
    Then to show \eqref{ineq:b3}, it suffices to show that, for the second term of \eqref{eq:b3-Phi}, there exists $c>0$ such that
    \begin{align}\label{ineq:b3-1}
        \liminf_{n\to\infty} \inf_{\substack{\|\mu\|\leq M, \lambda_{\min}(\Sigma)\leq c,\\\tr(\Sigma)=p,\Sigma\in \PD}}\left\|\frac{1}{n}\sum_{i=1}^n \varphi_2(\mu,\Sigma,\xi_i)\right\|_{\F} > 0
    \end{align}
    almost surely. We note that 
    \begin{align*}
        \left\|\frac{1}{n}\sum_{i=1}^n \varphi_2(\mu,\Sigma,\xi_i)\right\|_{\F} \;=\; &\left\|\frac{\Sigma}{p}-\frac{1}{n}\sum_{i=1}^n\frac{(\xi_i-\mu)(\xi_i-\mu)^\top}{(\xi_i-\mu)^\top\Sigma^{-1}(\xi_i-\mu)}\right\|_{\F}\\
        \geq\; &\left\|\Sigma/p\right\|_{\F}-\left\|\frac{1}{n}\sum_{i=1}^n \frac{(\xi_i-\mu)(\xi_i-\mu)^\top}{(\xi_i-\mu)^\top\Sigma_{}^{-1}(\xi_i-\mu)}\right\|_{\F}\\
        \geq\;&\frac{1}{\sqrt{p}}-\left\|\frac{1}{n}\sum_{i=1}^n \frac{(\xi_i-\mu)(\xi_i-\mu)^\top}{(\xi_i-\mu)^\top\Sigma_{}^{-1}(\xi_i-\mu)}\right\|_{\F},
    \end{align*}
    where the second inequality holds for $\tr(\Sigma) = p$. In the following, to show \eqref{ineq:b3-1}, we are to show that there exists a constant $c>0$ such that 
    \begin{align}\label{ineq:b3-2}
        \limsup_{n\to\infty} \sup_{\substack{\|\mu\|\leq M, \lambda_{\min}(\Sigma)\leq c,\\\tr(\Sigma)=p,\Sigma\in \PD}}\left\|\frac{1}{n}\sum_{i=1}^n \frac{(\xi_i-\mu)(\xi_i-\mu)^\top}{(\xi_i-\mu)^\top\Sigma_{}^{-1}(\xi_i-\mu)}\right\|_{\F} \leq \frac{1}{2p}~\text{almost surely},
    \end{align}
    since $1/\sqrt{p}-1/(2p)>0$.

To show \eqref{ineq:b3-2}, let $c>0$ be a constant to be specified shortly. We first introduce an auxiliary matrix by 
\[
    \Sigma^{(c)}\Let\frac{\Sigma+cI_p}{1+c}
\]
for each $\Sigma\in\PD$ with $\tr(\Sigma)=p$,
and show that for every $\mu\in\R^p$ and every
$\Sigma\in\PD$ with $\tr(\Sigma)=p$,
\begin{align}\label{ineq:b3-regularized-comparison}
    \left\|
        \frac1n\sum_{i=1}^n
        \frac{(\xi_i-\mu)(\xi_i-\mu)^\top}
             {(\xi_i-\mu)^\top\Sigma^{-1}(\xi_i-\mu)}
    \right\|_{\F}
    \leq(1+c)
    \left\|
        \frac1n\sum_{i=1}^n
        \frac{(\xi_i-\mu)(\xi_i-\mu)^\top}
             {(\xi_i-\mu)^\top(\Sigma^{(c)})^{-1}(\xi_i-\mu)}
    \right\|_{\F}.
\end{align}
To see this, note that
$(\Sigma+cI_p)^{-1}\preceq\Sigma^{-1}$ and
$(\Sigma^{(c)})^{-1}=(1+c)(\Sigma+cI_p)^{-1}$.
Therefore, for every $x\in\R^p$,
\begin{align*}
    0\preceq
    \frac{(x-\mu)(x-\mu)^\top}
         {(x-\mu)^\top\Sigma^{-1}(x-\mu)}
    \preceq
    \frac{(x-\mu)(x-\mu)^\top}
         {(x-\mu)^\top(\Sigma+cI_p)^{-1}(x-\mu)}
    =(1+c)
    \frac{(x-\mu)(x-\mu)^\top}
         {(x-\mu)^\top(\Sigma^{(c)})^{-1}(x-\mu)}.
\end{align*}
Then we obtain
\begin{align*}
    0\preceq
    \frac1n\sum_{i=1}^n
    \frac{(\xi_i-\mu)(\xi_i-\mu)^\top}
         {(\xi_i-\mu)^\top\Sigma^{-1}(\xi_i-\mu)}
    \preceq
    (1+c)\frac1n\sum_{i=1}^n
    \frac{(\xi_i-\mu)(\xi_i-\mu)^\top}
         {(\xi_i-\mu)^\top(\Sigma^{(c)})^{-1}(\xi_i-\mu)},
\end{align*}
and hence \eqref{ineq:b3-regularized-comparison} holds. 

Then by \eqref{ineq:b3-regularized-comparison} and the triangle inequality, it holds that for every $\|\mu\|\leq M$ and every $\Sigma\in\PD$
with $\tr(\Sigma)=p$ and $\lambda_{\min}(\Sigma)\leq c$,
\begin{align}
&\left\|
        \frac1n\sum_{i=1}^n
        \frac{(\xi_i-\mu)(\xi_i-\mu)^\top}
             {(\xi_i-\mu)^\top\Sigma^{-1}(\xi_i-\mu)}
    \right\|_{\F} \leq (1+c)
    \left\|\frac1n\sum_{i=1}^n
        \frac{(\xi_i-\mu)(\xi_i-\mu)^\top}
             {(\xi_i-\mu)^\top(\Sigma^{(c)})^{-1}(\xi_i-\mu)}
    \right\|_{\F}\nonumber\\
    \leq \;&(1+c)\left\{\left\|
        \frac1n\sum_{i=1}^n
        \frac{(\xi_i-\mu)(\xi_i-\mu)^\top}
             {(\xi_i-\mu)^\top(\Sigma^{(c)})^{-1}(\xi_i-\mu)}
        -\E\left[
            \frac{(\xi-\mu)(\xi-\mu)^\top}
                 {(\xi-\mu)^\top(\Sigma^{(c)})^{-1}(\xi-\mu)}
        \right]
    \right\|_{\F}\right.\nonumber\\&\quad+\left.
    \left\|
        \E\left[
            \frac{(\xi-\mu)(\xi-\mu)^\top}
                 {(\xi-\mu)^\top(\Sigma^{(c)})^{-1}(\xi-\mu)}
        \right]
    \right\|_{\F}\right\}. \label{ineq:b3-3}
\end{align}
Hence, to show \eqref{ineq:b3-2}, it suffices to bound the right hand side of \eqref{ineq:b3-3} term by term. 

Now we deal with the second term in \eqref{ineq:b3-3}. To do this, we first claim that
\begin{align}\label{eq:b3-population-zero}
    \lim_{t\downarrow0}
    \sup_{\substack{\|\mu\|\leq M,\ \lambda_{\min}(\Sigma)\leq t,\\
                    \tr(\Sigma)=p,\ \Sigma\in\PD}}
    \left\|
        \E\left[
            \frac{(\xi-\mu)(\xi-\mu)^\top}
                 {(\xi-\mu)^\top\Sigma^{-1}(\xi-\mu)}
        \right]
    \right\|_{\F}=0.
\end{align}
To ease notation, let
\[
    S(t) \Let \sup_{\substack{\|\mu\|\leq M,\ \lambda_{\min}(\Sigma)\leq t,\\
                    \tr(\Sigma)=p,\ \Sigma\in\PD}}
    \left\|
        \E\left[
            \frac{(\xi-\mu)(\xi-\mu)^\top}
                 {(\xi-\mu)^\top\Sigma^{-1}(\xi-\mu)}
        \right]
    \right\|_{\F}.
\]
Then to show \eqref{eq:b3-population-zero}, for each integer $\ell\geq1$, choose a
pair $(\mu_\ell,\Sigma_\ell)$ satisfying
$\|\mu_\ell\|\leq M$, $\tr(\Sigma_\ell)=p$,
$\Sigma_\ell\in\PD$, and $\lambda_{\min}(\Sigma_\ell)\leq1/\ell$,
such that
\begin{align}\label{ineq:b3-4}
     S(1/\ell) \leq \left\|
        \E\left[
            \frac{(\xi-\mu_\ell)(\xi-\mu_\ell)^\top}
                 {(\xi-\mu_\ell)^\top\Sigma_\ell^{-1}(\xi-\mu_\ell)}
        \right]
    \right\|_{\F} + \frac{1}{\ell}.
\end{align}
Since $\{(\mu,\Sigma):\|\mu\|\leq M,\
       \tr(\Sigma)=p,\ \Sigma\in\PSD\}$
is compact, there exists a subsequence of $(\mu_\ell,\Sigma_\ell)$, indexed by $\ell_k$, such that
\[
    \mu_{\ell_k}\to\mu',\qquad
    \Sigma_{\ell_k}\to\Sigma'\in\PSD,\qquad
    \tr(\Sigma')=p,\qquad
    \lambda_{\min}(\Sigma')=0.
\]
Let $v$ be a unit eigenvector of $\Sigma'$ corresponding to
the eigenvalue zero. For any fixed $x\in\R^p$ satisfying
$v^\top(x-\mu')\neq0$, the Cauchy--Schwarz inequality implies that
\[
    (x-\mu_{\ell_k})^\top\Sigma_{\ell_k}^{-1}(x-\mu_{\ell_k})
    \geq
    \frac{[v^\top(x-\mu_{\ell_k})]^2}
         {v^\top\Sigma_{\ell_k}v}
    \longrightarrow+\infty,
\]
where the convergence is by $[v^\top(x-\mu_{\ell_k})]^2\to [v^\top(x-\mu')]^2>0$ and $v^\top\Sigma_{\ell_k}v\downarrow v^\top\Sigma'v=0$.
Consequently,
\[
    \frac{(x-\mu_{\ell_k})(x-\mu_{\ell_k})^\top}
         {(x-\mu_{\ell_k})^\top\Sigma_{\ell_k}^{-1}
          (x-\mu_{\ell_k})}
    \longrightarrow0.
\]
Since by assumption, $\xi$ admits a density,
$\prob(v^\top(\xi-\mu')=0)=0$. Moreover, the upper bound established
in the proof of condition (a) gives that
\[
    \left\|
        \frac{(x-\mu_{\ell_k})(x-\mu_{\ell_k})^\top}
             {(x-\mu_{\ell_k})^\top\Sigma_{\ell_k}^{-1}
              (x-\mu_{\ell_k})}
    \right\|_{\F}
    \leq\lambda_{\max}(\Sigma_{\ell_k})\leq p.
\]
Thus, by the dominated convergence theorem,
\begin{align*}
    \left\|
        \E\left[
            \frac{(\xi-\mu_{\ell_k})(\xi-\mu_{\ell_k})^\top}
                 {(\xi-\mu_{\ell_k})^\top\Sigma_{\ell_k}^{-1}
                  (\xi-\mu_{\ell_k})}
        \right]
    \right\|_{\F}
   \leq
    \E\left[
        \left\|
            \frac{(\xi-\mu_{\ell_k})(\xi-\mu_{\ell_k})^\top}
                 {(\xi-\mu_{\ell_k})^\top\Sigma_{\ell_k}^{-1}
                  (\xi-\mu_{\ell_k})}
        \right\|_{\F}
    \right]
    \longrightarrow0.
\end{align*}
Then by \eqref{ineq:b3-4}, $S(1/\ell_k)\to 0$ as $k\to\infty$. Moreover, since $S(t)$ is nonincreasing as $t$ decreases, this proves \eqref{eq:b3-population-zero}. Then by \eqref{eq:b3-population-zero}, there exists a deterministic
constant $c\in(0,1)$ such that
\begin{align}\label{ineq:b3-population-small}
    \sup_{\substack{\|\mu\|\leq M,\ \lambda_{\min}(\Sigma)\leq2c,\\
                    \tr(\Sigma)=p,\ \Sigma\in\PD}}
    \left\|
        \E\left[
            \frac{(\xi-\mu)(\xi-\mu)^\top}
                 {(\xi-\mu)^\top\Sigma^{-1}(\xi-\mu)}
        \right]
    \right\|_{\F}
    <\frac1{4p}.
\end{align}
Fix such a constant $c$ for the rest of the proof.
For each $\Sigma\in\PD$ with $\tr(\Sigma)=p$, recall the definition of $\Sigma^{(c)}$:
\[
    \Sigma^{(c)}\Let\frac{\Sigma+cI_p}{1+c}.
\]
Then if $\lambda_{\min}(\Sigma)\leq c$, it holds that
\[
    \lambda_{\min}(\Sigma^{(c)})
    \leq\frac{2c}{1+c}\leq2c.
\]
Consequently, \eqref{ineq:b3-population-small} implies that
\begin{align}\label{ineq:b3-sigma-c-expectation}
    &\sup_{\substack{\|\mu\|\leq M,\ \lambda_{\min}(\Sigma)\leq c,\\
                     \tr(\Sigma)=p,\ \Sigma\in\PD}}
    \left\|
        \E\left[
            \frac{(\xi-\mu)(\xi-\mu)^\top}
                 {(\xi-\mu)^\top(\Sigma^{(c)})^{-1}(\xi-\mu)}
        \right]
    \right\|_{\F} \leq
    \sup_{\substack{\|\mu\|\leq M,\ \lambda_{\min}(\Sigma)\leq2c,\\
                    \tr(\Sigma)=p,\ \Sigma\in\PD}}
    \left\|
        \E\left[
            \frac{(\xi-\mu)(\xi-\mu)^\top}
                 {(\xi-\mu)^\top \Sigma^{-1}(\xi-\mu)}
        \right]
    \right\|_{\F}
    <\frac1{4p}.
\end{align}

Now we consider the first term in \eqref{ineq:b3-3}. We note that for $\Sigma\in \PD$, it holds that 
\[
    \lambda_{\min}(\Sigma^{(c)})
    =\frac{\lambda_{\min}(\Sigma)+c}{1+c}
    \geq\frac{c}{1+c}>0,
\]
or in other words, $\Sigma^{(c)} \succeq \frac{c}{1+c}I_p$.
The discussion of the first term in \eqref{ineq:b3-3} is an application of Lemma~\ref{lemma:h-ULLN} to the compact sets
\[
    X\Let \{\mu\in\R^p:\|\mu\|\leq M\}
    \quad\text{and}\quad Y\Let
    \left\{\Sigma\in\PD:
        \tr(\Sigma)=p,\ \Sigma\succeq\frac{c}{1+c}I_p\right\}.
\]
Note that for $(\mu,\Sigma)\in X\times Y$,
\begin{align*}
    \frac1n\sum_{i=1}^n\varphi_2(\mu,\Sigma,\xi_i)
     -\E[\varphi_2(\mu,\Sigma,\xi)]
    =
    -\left\{
        \frac1n\sum_{i=1}^n
        \frac{(\xi_i-\mu)(\xi_i-\mu)^\top}
             {(\xi_i-\mu)^\top \Sigma^{-1}(\xi_i-\mu)}
        -\E\left[
            \frac{(\xi-\mu)(\xi-\mu)^\top}
                 {(\xi-\mu)^\top \Sigma^{-1}(\xi-\mu)}
        \right]
    \right\}.
\end{align*}
Thus, Lemma~\ref{lemma:h-ULLN} implies that
\begin{align}\label{ineq:b3-regularized-uniform}
&\sup_{\substack{\|\mu\|\leq M,\\
                     \tr(\Sigma)=p,\ \Sigma\in\PD}} \left\|
        \frac1n\sum_{i=1}^n
        \frac{(\xi_i-\mu)(\xi_i-\mu)^\top}
             {(\xi_i-\mu)^\top(\Sigma^{(c)})^{-1}(\xi_i-\mu)}
        -\E\left[
            \frac{(\xi-\mu)(\xi-\mu)^\top}
                 {(\xi-\mu)^\top(\Sigma^{(c)})^{-1}(\xi-\mu)}
        \right]
    \right\|_{\F}\nonumber\\
    \leq &\sup_{\substack{\|\mu\|\leq M,\ \Sigma\succeq cI_p/(1+c),\\
                     \tr(\Sigma)=p,\ \Sigma\in\PD}}
    \left\|
        \frac1n\sum_{i=1}^n
        \frac{(\xi_i-\mu)(\xi_i-\mu)^\top}
             {(\xi_i-\mu)^\top \Sigma^{-1}(\xi_i-\mu)}
        -\E\left[
            \frac{(\xi-\mu)(\xi-\mu)^\top}
                 {(\xi-\mu)^\top \Sigma^{-1}(\xi-\mu)}
        \right]
    \right\|_{\F}
    \asto0.
\end{align}

Then combining \eqref{ineq:b3-3} and the upper bounds \eqref{ineq:b3-sigma-c-expectation}, 
\eqref{ineq:b3-regularized-uniform}, we conclude that
\begin{align*}
    &\limsup_{n\to\infty}
    \sup_{\substack{\|\mu\|\leq M,\ \lambda_{\min}(\Sigma)\leq c,\\
                    \tr(\Sigma)=p,\ \Sigma\in\PD}}
    \left\|
        \frac1n\sum_{i=1}^n
        \frac{(\xi_i-\mu)(\xi_i-\mu)^\top}
             {(\xi_i-\mu)^\top\Sigma^{-1}(\xi_i-\mu)}
    \right\|_{\F}\leq\frac{1+c}{4p}<\frac1{2p}
    \qquad\text{almost surely},
\end{align*}
where the last inequality follows from $c<1$.
This completes the proof of \eqref{ineq:b3}.
\end{proof}

\subsection{Proof of Proposition \ref{prop:convexity}}

\begin{proof}[Proof of Proposition \ref{prop:convexity}]
    Since $\rho$ is defined over $L^q$ and    $\E_{(\mu,\Sigma)\sim \Pi_n}[|F(x,\mu,\Sigma)|^q]<\infty$, then ${v}(x)$ is finite valued. The convexity follows from convexity of $F$ in $x$ for each $(\mu,\Sigma)$, and convexity and monotonicity of $\rho$. To see Lipschitz continuity, 
    we note that ${v}$ is convex over $\R^n$, therefore it is locally Lipschitz continuous 
    everywhere and hence globally Lipschitz continuous over any compact set of its domain. 
\end{proof}

\subsection{Proof of Theorem \ref{thm:unif-conv}}

\begin{proof}[Proof of Theorem \ref{thm:unif-conv}]
Let
\[
    \mathscr{P}_q(\R)\Let \left\{\mu\in\mathscr{P}(\R):\int_{\R} |z|^q\mu(\diff z)<\infty\right\}
\]
be the set of all probability measures on $\R$ with finite $q$-th order moment.
Since $\rho$ is law-invariant, it induces a functional
$\mathcal R:\mathscr{P}_q(\R)\to\R$ by
\[
    \mathcal R(\mu)\Let \rho(X),
\]
where $X\in L^q$ is any random variable whose law is $\mu$, and the domain of $\rho$ is given by
\[
    \operatorname{dom}(\mathcal{R}) = \left\{\mu\in\mathscr{P}_{q}(\R):X\in L^q, X\sim \mu, \rho(X) < \infty\right\}.
\]
Let the $q$-Wasserstein distance be defined as
\[
    W_q(\mu,\nu) \Let \inf_{\gamma\in \Gamma(\mu,\nu)} \left(\int_{\R\times \R}|x-y|^q \gamma(\diff x,\diff y)\right)^{1/q} = \inf_{X\sim\mu,Y\sim\nu}
    \left(\mathbb E |X-Y|^q\right)^{1/q} ,\quad \mu,\nu\in \mathscr{P}_q(\R).
\]
In the following, we first show that functional $\mathcal{R}:\mathscr{P}_q(\R)\to\R$ is Lipschitz continuous with respect to the $q$-Wasserstein distance on its domain. Let $\mu,\nu\in \operatorname{dom}(\mathcal{R})$, and let $X\sim \mu,Y\sim\nu$. Then, by the $L^q$-Lipschitz continuity of $\rho$,
\[
    |\mathcal R(\mu)-\mathcal R(\nu)|
    =
    |\rho(X)-\rho(Y)|
    \le
    \kappa_1 \|X-Y\|_{L^q}
    =
    \kappa_1
    \left(\mathbb E |X-Y|^q\right)^{1/q}.
\]
Since the above inequality holds for every possible $X$ and $Y$, by taking infimum over $X\sim \mu,Y\sim \nu$, it holds that
\[
    |\mathcal R(\mu)-\mathcal R(\nu)|
    \le
    \kappa_1
    \inf_{X\sim\mu,Y\sim\nu}
    \left(\mathbb E |X-Y|^q\right)^{1/q} = \kappa_1 W_q(\mu,\nu).
\]
Given $x\in \mathcal{X}$, let $P_x$ be the law of $g(x,\xi)$ induced by the distribution of $\xi$, and $\widehat{P}_{N,x}$ be the uniform distribution over $\{g(x,\xi_i)\}_{i=1}^N$. Then for every $x\in\mathcal X$,
\[
    |\widehat{v}_N(x)-{v}(x)|
    =
    |\mathcal R(\widehat P_{N,x})-\mathcal R(P_x)|
    \le
    \kappa_1 W_q(\widehat P_{N,x},P_x).
\]
Taking the supremum over $x\in\mathcal X$ on both sides, we obtain
\[
    \sup_{x\in\mathcal X}
    |\widehat{v}_N(x)-{v}(x)|
    \le
    \kappa_1
    \sup_{x\in\mathcal X}
    W_q(\widehat P_{N,x},P_x).
\]
Then to show the result, it remains to show that
\[
    \sup_{x\in\mathcal X}
    W_q(\widehat P_{N,x},P_x)
    \to 0
    \qquad \text{a.s.}
\]
 as $ N\to\infty$.
To show this, let $\varepsilon >0$ be a constant. Since $\mathcal X$ is compact, there exists a finite $\varepsilon$-net $\{x_1,\dots,x_M\}\subset \mathcal X$ such that for every
$x\in\mathcal X$, there is some $x_j$ satisfying
$\|x-x_j\|\le \varepsilon$. For such $x_j$, the triangle inequality implies that
\begin{align}
    \label{ineq:Wq-3}
    W_q(\widehat P_{N,x},P_x)
    \le
    W_q(\widehat P_{N,x},\widehat P_{N,x_j})
    +
    W_q(\widehat P_{N,x_j},P_{x_j})
    +
    W_q(P_{x_j},P_x).
\end{align}
Now we analyze the right hand side of \eqref{ineq:Wq-3} term by term. For the third term, since $g(x,\xi) \sim P_x$ and $g(x_j,\xi) \sim P_{x_j}$, by definition of $q$-Wasserstein distance, we have
\begin{align}\label{ineq:Wq-term3}
     W_q(P_x,P_{x_j}) \le\; &\left(
        \E [|g(x,\xi)-g(x_j,\xi)|^q] \right)^{1/q}
    \le \left(\E [\kappa_2(\xi)^q\|x-x_j\|^q] \right)^{1/q} = 
    \|x-x_j\|\|\kappa_2(\xi)\|_{L^q}\nonumber\\
    \leq \;&\varepsilon\|\kappa_2(\xi)\|_{L^q},
\end{align}
where the second inequality is by the Lipschitz continuity of $g$.
Likewise, for the first term, construct the coupling
\[
    \widehat\gamma_N \Let \frac{1}{N}\sum_{i=1}^N \delta_{(g(x,\xi_i),g(x_j,\xi_i))} \in \Gamma(\widehat{P}_{N,x}, \widehat{P}_{N,x_j}).
\]
Then we have
\begin{align}\label{ineq:Wq-term1}
    W_q(\widehat P_{N,x},\widehat P_{N,x_j})
    \le\; &\left(\int_{\R\times \R} |s-t|^q \widehat\gamma_N(\diff s,\diff t)\right)^{1/q} = 
    \left(
        \frac1N
        \sum_{i=1}^N
        |g(x,\xi_i)-g(x_j,\xi_i)|^q
    \right)^{1/q}\nonumber\\
    \le\; &\|x-x_j\|
    \left(\frac1N \sum_{i=1}^N \kappa_2(\xi_i)^q
    \right)^{1/q} \le\; \varepsilon \left(\frac1N \sum_{i=1}^N \kappa_2(\xi_i)^q \right)^{1/q}.
\end{align}
Combining \eqref{ineq:Wq-3}, \eqref{ineq:Wq-term3}, and \eqref{ineq:Wq-term1} and taking supremum on both sides of \eqref{ineq:Wq-3}, we obtain
\begin{align}\label{ineq:sup-Wq}
    \sup_{x\in\mathcal X}
    W_q(\widehat P_{N,x},P_x)
    \le\;&
    \max_{1\le j\le M}
    W_q(\widehat P_{N,x_j},P_{x_j}) +\varepsilon
    \left[
        \left(
            \frac1N
            \sum_{i=1}^N
            \kappa_2(\xi_i)^q
        \right)^{1/q}
        +
        \|\kappa_2(\xi)\|_{L^q}
    \right].
\end{align}
For each fixed $x_j$, since $g(x_j,\xi)\in L^q$, the empirical measure
$\widehat P_{N,x_j}$ converges to $P_{x_j}$ in $W_q$ almost surely. Hence, since the net is finite,
\[
    \max_{1\le j\le M}
    W_q(\widehat P_{N,x_j},P_{x_j})
    \to 0
    \qquad \text{a.s. as}\; N\to\infty.
\]
Moreover, by the strong law of large numbers,
\[
    \left(
        \frac1N
        \sum_{i=1}^N
        \kappa_2(\xi_i)^q
    \right)^{1/q}
    \to
    \|\kappa_2\|_{L^q}
    \qquad \text{a.s.}
\]
Consequently, \eqref{ineq:sup-Wq} implies that
\[
    \limsup_{N\to\infty}
    \sup_{x\in\mathcal X}
    W_q(\widehat P_{N,x},P_x)
    \le
    2\varepsilon \|\kappa_2\|_{L^q}
    \qquad \text{a.s.}
\]
Now let $\varepsilon\to 0$, we obtain
\[
    \limsup_{N\to\infty}
    \sup_{x\in\mathcal X}
    W_q(\widehat P_{N,x},P_x)= 0
    \qquad \text{a.s.}
\]
This completes the proof.
\end{proof}

\subsection{Proof of Theorem \ref{thm:cutting-convergence}}

\begin{proof}[Proof of Theorem \ref{thm:cutting-convergence}]
The proof adapts the convergence analysis of Kelley's method
(see, e.g., \citealp{ref:kelley1960cutting}) to the master problem \eqref{eq:master}. We first prove two preliminary facts: a Lipschitz estimate for $\rho$, and a uniform bound on the cut slopes $g_{i,j}$.

For $a = (a_1,\ldots,a_N) \in \R^N$, write $\rho(\{a_i\}_{i=1}^N)$ for the
risk of the uniform distribution over $\{a_1,\ldots,a_N\}$ as in
\eqref{eq:saa}. We claim that
\begin{equation}\label{eq:rho-lip}
    \left|\rho\big(\{a_i\}_{i=1}^N\big) - \rho\big(\{b_i\}_{i=1}^N\big)\right|
    \;\le\; \max_{1\le i\le N} |a_i - b_i|,
    \qquad \forall\, a, b \in \R^N.
\end{equation}
To see this, let $c \Let \max_{1\le i\le N}|a_i - b_i|$. Then by the monotonicity and translation invariance of $\rho$,
\[
    \rho(\{a_i\}_{i=1}^N) \;\le\; \rho(\{b_i\}_{i=1}^N + c) \;=\; \rho(\{b_i\}_{i=1}^N) + c.
\]
Exchanging the roles of $a$ and $b$, we have \eqref{eq:rho-lip}.

Next we bound the cut slopes. Fix $i \in \{1,\ldots,N\}$. The function $F(\cdot,\mu_i,\Sigma_i)$ is convex and finite on $\R^d$. Hence $\partial_x F(x,\mu_i,\Sigma_i)$ is
nonempty for every $x \in \mathcal{X}$, and is bounded uniformly over the
compact set $\mathcal{X}$
\citep[Theorem~23.4]{rockafellar1997convex}. Define
\begin{align}
    \label{def:G}
    G \;\Let\; \max_{1\le i\le N}\; \sup_{x\in\mathcal{X}}\;
    \sup\big\{\|g\| : g \in \partial_x F(x,\mu_i,\Sigma_i)\big\} \;<\; \infty.
\end{align}
Then $\|g_{i,j}\| \le G$ for all $i$ and $j$. Moreover, for any
$x, y \in \mathcal{X}$ and any $g \in \partial_x F(x,\mu_i,\Sigma_i)$, the
subgradient inequality gives
$F(y,\mu_i,\Sigma_i) \ge F(x,\mu_i,\Sigma_i) + \la g, y - x\ra
\ge F(x,\mu_i,\Sigma_i) - G\|y - x\|$. Exchanging $x$ and $y$ shows that
$F(\cdot,\mu_i,\Sigma_i)$ is $G$-Lipschitz continuous on $\mathcal{X}$.
Combining this with \eqref{eq:rho-lip}, $\hat{v}_N$ is $G$-Lipschitz continuous on $\mathcal{X}$.

\underline{Step 1: proof of part (i).} For $k \ge 0$, 
let 
\[
    \hat{F}_i^k(x) \Let \max_{j=0,1,\ldots,k}
    \big\{F_{i,j} + \la g_{i,j}, x - x_j\ra\big\}\quad\text{for}~i=1,\ldots,N,
\]
and define
\begin{equation}\label{eq:model-fn}
    \underline{v}_k(x) \;\Let\;
    \rho\Big(\left\{\hat{F}_i^k(x)\right\}_{i=1}^N\Big).
\end{equation}
Recall that $\mathrm{LB}_{k+1}$ is the optimal value of \eqref{eq:master}.
We claim that
\begin{equation}\label{eq:master-equiv}
    \mathrm{LB}_{k+1} \;=\; \underline{v}_k(x_{k+1})
    \;=\; \min_{x\in\mathcal{X}} \underline{v}_k(x).
\end{equation}
To see this, let $x \in \mathcal{X}$ be fixed. By construction, a vector $\hat{F} \in \R^N$ is feasible for \eqref{eq:master} at this $x$ if and only if $\hat{F}_i \ge \hat{F}_i^k(x)$ for $i=1,\ldots,N$. Moreover, the objective of \eqref{eq:master} is nondecreasing in $\hat{F}$ by the monotonicity of $\rho$. Then by fixing $x$, the partial minimization of \eqref{eq:master} over $\hat{F}$ has value $\underline{v}_k(x)$, and hence \eqref{eq:master-equiv} follows. The minimum on the right-hand side of \eqref{eq:master-equiv} is attained since $\underline{v}_k$ is continuous in $x$, and $\mathcal{X}$ is compact.

Since $g_{i,j} \in \partial_x F(x_j,\mu_i,\Sigma_i)$, the subgradient inequality gives that 
\[
    F(x,\mu_i,\Sigma_i) \ge F_{i,j} + \la g_{i,j}, x - x_j\ra
\]
for all $x \in \mathcal{X}$ and all $i, j$. Hence
$F(x,\mu_i,\Sigma_i) \ge \hat{F}_i^k(x)$ for $i=1,\ldots,N$, and the monotonicity of $\rho$ gives
\begin{equation}\label{eq:model-under}
    \hat{v}_N(x) \;\ge\; \underline{v}_k(x), \qquad \forall\, x\in\mathcal{X}.
\end{equation}
Minimizing both sides of \eqref{eq:model-under} over $\mathcal{X}$ and using \eqref{eq:master-equiv}, we have $\mathrm{LB}_{k+1} \le \hat{v}_N^*$. Also, $\mathrm{UB}_{k+1} = \min_{j=0,1,\ldots,k}\hat{v}_N(x_j) \ge \hat{v}_N^*$. This
shows the sandwich inequality in part~(i). The sequence $\{\mathrm{UB}_k\}$ is nonincreasing because it is a running minimum. The sequence $\{\mathrm{LB}_k\}$ is nondecreasing because
$\underline{v}_{k+1} \ge \underline{v}_k$ pointwise, and hence $\min_{x\in\mathcal{X}} \underline{v}_{k+1}(x) \geq \min_{x\in\mathcal{X}} \underline{v}_{k}(x) $.

\underline{Step 2: the gap estimate.} We claim that
\begin{equation}\label{eq:gap-estimate}
    \mathrm{UB}_{k+1} - \mathrm{LB}_{k+1} \;\le\; G\,\|x_{k+1} - x_j\|,
    \qquad \forall\, j = 0,1,\ldots,k.
\end{equation}
Let $j \in \{0,\ldots,k\}$ be fixed. For every $i$,
\[
    \hat{F}_i^k(x_{k+1})
    \;\ge\; F_{i,j} + \la g_{i,j}, x_{k+1} - x_j\ra
    \;\ge\; F(x_j,\mu_i,\Sigma_i) - G\|x_{k+1} - x_j\|,
\]
where the first inequality is by the definition of $\hat{F}_i^k$, and the second inequality is by
$F_{i,j} = F(x_j,\mu_i,\Sigma_i)$ and the inequality $\left|\la g_{i,j}, x_{k+1} - x_j\ra\right| \leq G\|x_{k+1}-x_j\|$. The monotonicity and translation invariance of $\rho$ then give
\begin{align}\label{ineq:LB-v}
    \mathrm{LB}_{k+1} \;=\; \underline{v}_k(x_{k+1})
    \;\ge\; \rho\Big(\big\{F(x_j,\mu_i,\Sigma_i)\big\}_{i=1}^N\Big)
        - G\|x_{k+1} - x_j\|
    \;=\; \hat{v}_N(x_j) - G\|x_{k+1} - x_j\|.
\end{align}
On the other hand, by the definition of $\mathrm{UB}_{k+1}$ we have $\mathrm{UB}_{k+1} \leq \hat{v}_N(x_j)$, and \eqref{eq:gap-estimate} follows.

\underline{Step 3: proof of parts (ii) and (iii).} Let
$\Delta_k \Let \mathrm{UB}_k - \mathrm{LB}_k$. By part~(i), $\Delta_k \ge 0$
and $\{\Delta_k\}$ is nonincreasing. Since $\mathcal{X}$ is compact, the
sequence $\{x_k\}$ admits a convergent subsequence $\{x_{k_l}\}$. A
convergent sequence is a Cauchy sequence, so
$\|x_{k_{l+1}} - x_{k_l}\| \to 0$ as $l \to \infty$. Applying
\eqref{eq:gap-estimate}, we have $\Delta_{k_{l+1}} \le G\|x_{k_{l+1}} - x_{k_l}\| \to 0$. Hence
$\Delta_k \to 0$. Combining $\Delta_k \to 0$ with (i), we have $\mathrm{UB}_k \downarrow \hat{v}_N^*$ and $\mathrm{LB}_k \uparrow \hat{v}_N^*$.

Now let $\bar{x}$ be an accumulation point of $\{x_k\}$, and let
$\{x_{m_l}\}$ be a subsequence with $x_{m_l} \to \bar{x}$. By \eqref{ineq:LB-v} and the fact that $\hat{v}_N(x_{m_l}) \ge \mathrm{UB}_{m_{l+1}}$, we obtain
\[
    \hat{v}_N^* \;\le\; \hat{v}_N(x_{m_l})
    \;\le\; \mathrm{LB}_{m_{l+1}} + G\|x_{m_{l+1}} - x_{m_l}\|
    \;\to\; \hat{v}_N^*,
\]
where the convergence uses $\mathrm{LB}_k \uparrow \hat{v}_N^*$ and
$\|x_{m_{l+1}} - x_{m_l}\| \to 0$. Hence
$\hat{v}_N(x_{m_l}) \to \hat{v}_N^*$. By the continuity of $\hat{v}_N$ on
$\mathcal{X}$, $\hat{v}_N(\bar{x}) = \hat{v}_N^*$. This proves part~(ii).

Finally, let $\varepsilon > 0$. Since $\Delta_k \to 0$, there exists a finite
$K$ with $\Delta_K \le \varepsilon$, and the algorithm terminates at
iteration $K$. The returned point satisfies
$\hat{v}_N(x^*) = \mathrm{UB}_K \le \mathrm{LB}_K + \varepsilon
\le \hat{v}_N^* + \varepsilon$, where the last inequality is by part~(i).
This proves part~(iii).
\end{proof}

\end{document}